\documentclass[12pt]{article}
\usepackage{amsmath,amssymb,amsthm,tikz,dsfont}
\usepackage{bm}
\usepackage{graphics}
\usepackage[margin=1in]{geometry}
\newcommand{\bfnu}{\boldsymbol{\nu}}
\newcommand{\bfx}{{\bm x}}
\newcommand{\bfy}{{\bm y}}
\newcommand{\bfz}{{\bm z}}
\newcommand{\bfd}{{\bm d}}
\newcommand{\hbfx}{\hat{\bfx}}

\newcommand{\demi}{\frac{1}{2}}
\newcommand{\cF}{{\cal{}F}}
\newcommand{\jmp}[1]{\left[\!\left[#1\right]\!\right]}   
\newtheorem{theorem}{Theorem}

\newtheorem{remark}{Remark}
\newtheorem{lemma}{Lemma}
\newtheorem{definition}{Definition}

\newtheorem{assumption}{Assumption}
\newtheorem{prop}{Proposition}
\usepackage{hyperref}
\usepackage[nameinlink]{cleveref}

\begin{document}
\title{Target signatures for thin surfaces}
\author{Fioralba Cakoni\footnote{Department of Mathematics, Rutgers University, New Brunswick,  New Jersey,  USA 
 (fc292@math.rutgers.edu)} \and Peter Monk\footnote{Department of Mathematical Sciences, University of Delaware, Newark,  Delaware,  USA 
 (monk@udel.edu)} \and Yangwen Zhang\footnote{Department of Mathematical Sciences, University of Delaware, Newark,  Delaware,  USA 
 (ywzhangf@udel.edu)}}
\maketitle

 \begin{abstract}
	We investigate an inverse scattering problem for a thin inhomogeneous scatterer in ${\mathbb R}^m$, $m=2,3$, which we model as a $m-1$ dimensional open surface. The scatterer is referred to as a screen. The goal is to design  target signatures that are  computable from scattering data in order to detect changes in the material properties of the screen. This target signature is characterized by a mixed Steklov eigenvalue problem for  a domain whose boundary contains the screen. We show that the corresponding eigenvalues  can be determined  from appropriately modified  scattering data by using the generalized linear sampling  method. A weaker justification is provided for the classical linear sampling method. Numerical experiments are presented to support our theoretical results.
 \end{abstract}

\noindent{\bf Key words:}  inverse scattering, inhomogeneous media,  scattering by screens, non-destructive testing, the Steklov eigenvalue problem.\\
\noindent{\bf AMS subject classifications:} 35R30, 35J25, 35P25, 35P05

\section{Introduction}

Target signatures are discrete quantities that can be computed from scattering data and used to classify targets or serve as indicators of changes in targets.  A classical choice is the Singularity Expansion Method introduced to classify targets in radar scattering proposed by C. Baum~\cite{baum_91,Melrose95}, although this method eventually turned out to be difficult to determine in the radar application since highly accurate time domain data is needed.  More recently, Cakoni, Colton and Haddar suggested that transmission eigenvalues might provide target signatures for detecting changes in an object using multi-frequency  time harmonic data (see \cite{CCH16} for an introduction to transmission eigenvalues and the target signatures).   By sweeping through the frequency data, transmission eigenvalues can be determined from the far field operator for the scattered data provided the medium is non-conducting.  For a conducting medium the eigenvalues may be complex, and hence cannot be detected using real probing frequencies.

In an attempt to circumvent this problem, Cakoni, Colton, Meng and Monk~\cite{CCMM2016} introduced a modified far field operator by subtracting the far field operator for scattering by an artificial domain containing the scatterer and having an impedance boundary condition with variable impedance.  In this case the target signatures correspond to Steklov eigenvalues for the artificial domain and scatterer, and these can be determined using data at a single frequency.  Furthermore, the method can be applied to conducting objects since the artificial impedance problem can have a complex parameter.
The use of Steklov eigenvalues was later extended to Maxwell's equations \cite{CLM2017}, and the theoretical analysis was greatly improved by
giving a general framework for determining target signatures from far field data using the Generalized Linear Sampling method to determine the Steklov eigenvalues~\cite{ACH2017}.  This latter paper also introduced a new class of target signatures based on an artificial  modified interior transmission problem. All the previously mentioned papers had the disadvantage that it is not known if Steklov eigenvalues exist for conducting media (but if they do, they can be detected!).  This situation was remedied by Cogar~\cite{C2020} who further modified the Steklov problem by introducing a smoothing operator
into the impedance boundary condition.  This allows the existence of complex Steklov type eigenvalues for the new boundary condition to be proved.

All the previously mentioned work has involved scattering from bodies with a nontrivial interior (i.e. containing a disk in $\mathbb{R}^2$ or a sphere in $\mathbb{R}^3$).  However if the scatterer is thin compared to the wavelength of the probing radiation, it is often desirable to treat it as a screen with zero thickness but having special transmission conditions across the surface that model, approximately, the thin body.  In this paper we will  use a special case of the transmission condition developed in \cite{Irene_16} (see also \cite{DHJ_12}) for delaminating media.

An obvious problem with developing a Steklov type target signature for a screen is that there is no interior.  Instead, we propose to use an artificial domain
and a mixed Steklov eigenvalue problem.  In particular, suppose the screen is denoted by $\Gamma$; we assume that we can extend $\Gamma$ to obtain a closed piecewise smooth surface that is the boundary of a bounded connected region $D\subset \mathbb{R}^m$.  Thus
$\Gamma\subset \partial D$.  We then modify the far field pattern using a mixed scattering problem in the exterior of $D$ having a variable impedance 
denoted $\lambda$.  Modifying the measured far field pattern using the far field pattern from this artificial scattering problem allows us to demonstrate a
connection between the far field operator and an interior mixed Steklov eigenvalue problem posed on $D$.  In particular, the mixed Steklov eigenvalues can be determined from the measured far field pattern due to scattering by the screen.  We further connect properties of the surface impedance of the
screen to the target signatures.  Finally, we provide some numerical results in 2D that illustrate the theory. 

The choice of the domain $D$ is somewhat arbitrary beyond the constraint that $\Gamma\subset \partial D$. Our limited numerical results suggest that the choice can effect the number and quality of the eigenvalues that can be determined.  For the simple curves in 2D that we have considered it is sufficient to use the convex hull of the points on $\Gamma$.  Obviously this is not appropriate for more general screens.

This is the first paper on Steklov type target signatures for screens.  A related work on modified transmission target signatures for cracks can be found in 
\cite{C2018}. We remark that the method presented here aims to detect changes in the material properties of the screen  represented by coefficients in the jump conditions of the total field, and assumes that the geometry of the open surface is known. Most of available literature concerns the reconstruction of the shape of an open surface, and  we refer the reader to  \cite{crack3}, \cite{crackH}, \cite{crack4}, \cite{crack2}  for some related non-iterative inversion methods.

The paper proceeds as follows.  In the next section define the direct (or forward) problem and our proposal for target signatures. Then in \Cref{sec:deff} we show that the mixed Steklov eigenvalues defined in \Cref{sec:DAIP} can be determined using the Linear Sampling Method (LSM) or the generalized LSM.  As usual, the justification of the use of the convenient LSM is incomplete (for more discussion of this point, see \cite{CCH16} and comments later in this paper), while for the generalized LSM a complete theory can be given.  Nevertheless we use the LSM for our numerical results.  Results 
connecting the mixed Steklov eigenvalues to the impedance of $\Gamma$ are provided in \Cref{sec:resp} where we note, for example, that a
constant impedance can be determined from measurements of a single eigenvalue.  Next, in \Cref{sec:num}, we show preliminary numerical
results of determining eigenvalues from far field data, and investigate the sensitivity of the eigenvalues to perturbations of the impedance.  Conclusions are  presented in \Cref{sec:concl}.  The paper ends with an appendix (\ref{sec:exact}) on the exact calculation of mixed Steklov eigenvalues relevant to our
study.

\section{The Direct, Auxiliary and Inverse Problem}\label{sec:DAIP}

In this section we define the forward problem and target signatures.
We assume that $\Gamma$ is an open piecewise smooth $m-1$ dimensional,  oriented and non-self-intersecting surface in ${\mathbb R}^m$ for $m=2,3$. More precisely, we consider $\Gamma$ to be a subset of a closed piecewise smooth surface $\partial D$ circumscribing a connected region $D\subset {\mathbb R}^m$. Let  $\bfnu$ denote the unit outward normal to $\partial D$ and {let} the normal derivative $\partial_{\bfnu} u = \nabla u \cdot \bfnu$.  For a piecewise smooth function $w$ we define the jump on  $\Gamma$ by
{\[
	\jmp{w}=w|_{\mathbb{R}^m\setminus \overline{D}}-w|_{D}.
	\]}

The surface material parameter $\sigma:={k(\alpha_1+i\alpha_2)}$  is a complex valued  $L^{\infty}(\Gamma)$ function with non-negative imaginary part  ${\alpha_2\geq 0}$, where $k$ denotes the wave number of the background medium which is proportional to the interrogating frequency. Given an incident field $u^i$ and the surface parameter $\sigma$, the forward problem for $\Gamma$ is to find {$u\in H^1_{\rm{}loc}(\mathbb{R}^m)$} such that
\begin{subequations}
	\begin{eqnarray}
		\Delta u+k^2 u==0  \mbox{ in } \mathbb{R}^m\setminus \Gamma, &&  u=u^s+u^i  \mbox{ in }\mathbb{R}^m, \\
		\jmp{\partial_{\bm \nu} u}{+\sigma u}=0 \mbox{ on }\Gamma,  && \jmp{u}=0  \mbox{ on }\Gamma,\\
		\lim_{r\to\infty}r^{\frac{m-1}{2}}\left({\partial_r u^s}-iku^s\right)=0 && \mbox{ uniformly in }\hbfx=\bfx/r, \;r=\Vert\bfx\Vert.
	\end{eqnarray}\label{Forward_problem}
\end{subequations}

In practice, we usually take the incident field $u^i$ to be plane wave given by $u^i(\bfx)=\exp(i k \bfd\cdot\bfx)$, where we denote the incident direction by the unit vector $\bfd$.  
The far field pattern $u_{\infty}(\hbfx;\bfd)$ for  $\Vert \hbfx\Vert=1$ is defined from the following asymptotic behavior of the scattered field {\cite{CK2019}}
\begin{equation}
	u^s(\bfx)=\frac{\exp(ikr)}{r^{\frac{m-1}{2}}}\left\{u_{\infty}(\hbfx;\bfd)+O\left(\frac{1}{r^{\frac{m-1}{2}}}\right)\right\}\mbox{ as } r\to\infty.\label{def_us}
\end{equation}

\begin{remark}
	The {\it inverse problem} we are concerned with is, provided that  the shape $\Gamma$ of the surface is known, determine indicators of changes in the surface material parameter  $\sigma$ from a knowledge of the far field pattern $u_{\infty}(\hbfx;\bfd)$  for observation directions $\hbfx$ and incident directions $\bfd$ on the unit sphere or circle ${\mathbb{S}}:=\left\{\bfx\in {\mathbb R}^m: \, \Vert \bfx\Vert=1\right\}$ at a fixed wave number $k$. 		
\end{remark}


Before moving on to the inverse problem, we recall the radiating fundamental solution $\Phi(\bm x, \bm z)$ to the Helmholtz equation
\begin{equation}\label{fund}
	\Phi(\bm x, \bm z):=\left\{\begin{array}{rrcll}\displaystyle{\frac{e^{ik|\bfx-\bfz|}}{4\pi |\bfx-\bfz|}} \quad \; & \qquad  \mbox{in }\, {\mathbb R}^3, \\
		\displaystyle{\frac{i}{4}}H_0^{(1)}(k|\bfx-\bfz|) & \qquad \mbox{in }\, {\mathbb R}^2, 
	\end{array}\right.
\end{equation}
where  $H^{(1)}_0$ denoting the Hankel function of the first kind of order zero. This will be used in the upcoming analysis.

Our approach is based on the development of a target signature for $\sigma$  that uses the eigenvalues of an appropriate eigenvalue problem. We remark that this target signature can detect changes on $\sigma$ without knowing the base healthy value  of $\sigma$  nor reconstructing it. To introduce the eigenvalue problem we need the following auxiliary scattering problem.

{Given $\lambda\in\mathbb{C}$ with $\Im (\lambda)\ge 0$}, the auxiliary scattering impedance problem that we consider is to find 
$w^{(\lambda)}\in H^1_{\rm{}loc}(\mathbb{R}^m\setminus \overline{D})$ such that 
\begin{subequations}
	\begin{eqnarray}
		\Delta w^{(\lambda)}+k^2 w^{(\lambda)}=0  \mbox{ in } \mathbb{R}^m\setminus \overline{D},&\quad &
		w^{(\lambda)}=w^{(\lambda),s}+u^i  \mbox{ in }{\mathbb{R}^m\setminus \overline{D}}, \\
		\partial_{\bfnu} w^{(\lambda)} +\lambda w^{(\lambda)}=0  \mbox{ on }\Gamma,&\quad & 
		\partial_{\bfnu} w^{(\lambda)} =0 \mbox{ on }\partial D\setminus \Gamma,\\
		\lim_{r\to\infty}r^{\frac{m-1}{2}}\left(\partial_r w^{(\lambda),s} -ikw^{(\lambda),s}\right)=0  &&\mbox{ uniformly in  }\hbfx=\bfx/r.
	\end{eqnarray}	\label{auxiliary_problem}	
\end{subequations}

As before,  the far field pattern $w^{(\lambda)}_\infty$ corresponding to the scattered field $w^{(\lambda),s}$ is defined by the asymptotic condition
\[
w^{(\lambda),s}(\bfx)=\frac{\exp(ikr)}{r^{\frac{m-1}{2}}}\left\{w^{(\lambda)}_{\infty}(\hbfx;\bfd)+O\left(\frac{1}{r^{\frac{m-1}{2}}}\right)\right\}.
\]
The far field pattern of the above auxiliary  problem can be computed and involves only the known geometry of the surface $\Gamma$, and surface impedance parameter $\lambda$.

From the known measured far field patten $u_{\infty}(\hbfx;\bfd)$, and the artificial far field pattern $w^{(\lambda)}_{\infty}(\hbfx;\bfd)$, we define the standard far field operators for  $g\in L^2(\mathbb S)$ by 
\[
(F g)(\hbfx)=\int_{\mathbb{S}}u_{\infty}(\hbfx;\bfd)g(\bfd)\,ds_{d},\quad 
(F^{(\lambda)} g)(\hbfx)=\int_{\mathbb{S}}w^{(\lambda)}_{\infty}(\hbfx;\bfd)g(\bfd)\,ds_{d}.
\]
We also define the {\it modified far field operator} $\cF:L^2(\mathbb{S})\to L^2(\mathbb{S})$ by 
\[
(\cF g)(\hbfx)=\int_{\mathbb{S}}(u^{s}_{\infty}(\hbfx;\bfd)-w^{(\lambda),s}_{\infty}(\hbfx;\bfd))g(\bfd)\,ds_{d}.
\]
Obviously, the modified far field operator is 
\begin{equation}\label{dif}
	\cF g =Fg-F^{(\lambda)}g.
\end{equation}

For theoretical purposes, we sometimes use the Herglotz wave function
\begin{equation}\label{herg}
	u^i_g(\bfx)=\int_{\mathbb{S}}\exp(ik\bfx\cdot\bfd)g(\bfd)\,ds_d
\end{equation}
for $g\in L^2(\mathbb{S})$ as the incident field $u^i$ in the forward {problem} (\ref{Forward_problem}) and auxiliary problem (\ref{auxiliary_problem}). Let  $u_g$ and $w_g^{(\lambda)}$ be the solution of  the two problems  by using this function. Furthermore, we write the corresponding  far field patterns as
$u_{g,\infty}$ and $w_{g,\infty}^{(\lambda)}$.
\begin{lemma}\label{pro_F}
	Let $u_{g,\infty}$ and $w_{g,\infty}^{(\lambda)}$ be defined as above, then $\cF g=u_{g,\infty}-w_{g,\infty}^{(\lambda)}$.
\end{lemma}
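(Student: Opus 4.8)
The plan is to use the linearity of the forward problem (\ref{Forward_problem}) and of the auxiliary problem (\ref{auxiliary_problem}) in the incident field, together with the linearity of the map sending a radiating field to its far field pattern. By the identity (\ref{dif}) it is enough to prove separately that $Fg = u_{g,\infty}$ and $F^{(\lambda)}g = w^{(\lambda)}_{g,\infty}$ and then to subtract.

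First I would regard the Herglotz function (\ref{herg}) as a continuous superposition of the plane waves $\exp(ik\bfd\cdot\bfx)$ weighted by $g(\bfd)$. Writing $u(\cdot;\bfd)$ for the solution of (\ref{Forward_problem}) with the single plane wave $\exp(ik\bfd\cdot\bfx)$ as incident field, the natural candidate for $u_g$ is $\int_{\mathbb{S}} u(\cdot;\bfd)\,g(\bfd)\,ds_d$. I would then check that this superposition solves (\ref{Forward_problem}) with incident field $u^i_g$: the Helmholtz equation off $\Gamma$ and the jump conditions $\jmp{\partial_{\bfnu} u}+\sigma u = 0$ and $\jmp{u}=0$ on $\Gamma$ follow by linearity, while the Sommerfeld radiation condition transfers to the integrated field because the solution operator is bounded and $u^s(\cdot;\bfd)$ depends continuously on $\bfd$. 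Uniqueness of the forward problem then forces $u_g = \int_{\mathbb{S}} u(\cdot;\bfd)\,g(\bfd)\,ds_d$, and in particular $u^s_g = \int_{\mathbb{S}} u^s(\cdot;\bfd)\,g(\bfd)\,ds_d$.

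Next, since the far field pattern in (\ref{def_us}) is extracted from the scattered field by a bounded linear operator, I would commute this operator with the integration over $\mathbb{S}$, so that
\[
u_{g,\infty}(\hbfx) = \int_{\mathbb{S}} u_\infty(\hbfx;\bfd)\,g(\bfd)\,ds_d = (Fg)(\hbfx).
\]
The same argument applied to (\ref{auxiliary_problem}) gives $w^{(\lambda)}_{g,\infty} = F^{(\lambda)}g$. Combining these with (\ref{dif}) yields $\cF g = Fg - F^{(\lambda)}g = u_{g,\infty} - w^{(\lambda)}_{g,\infty}$, which is the claim.

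I expect the main obstacle to be the rigorous justification of this superposition principle, namely interchanging the integral over $\mathbb{S}$ both with the nonlocal, radiating solution operator and with the far field map. This relies on well-posedness of (\ref{Forward_problem}) and (\ref{auxiliary_problem}), giving solution operators that are linear and bounded, so that the interchange can be defended by a Fubini-type argument together with the continuous dependence of the fields on the incident direction. The asymptotic character of the radiation condition makes its passage through the integral the most delicate point, and it is precisely the boundedness estimate that guarantees the integrated field is again radiating with the expected far field pattern.
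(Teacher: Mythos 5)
Your proof is correct. The paper in fact states this lemma without any proof, treating it as the standard superposition principle for Herglotz incident fields, and your argument --- reducing via (\ref{dif}) to the two identities $Fg = u_{g,\infty}$ and $F^{(\lambda)}g = w^{(\lambda)}_{g,\infty}$, each justified by linearity and well-posedness of the scattering problems together with the interchange of the integral over $\mathbb{S}$ with the solution operator and the far field map --- is exactly the standard justification the authors implicitly rely on.
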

\noindent
{Throughout the remainder of the paper we make the following assumption on the domain $D$ in relation to the interrogating wave number $k$. 
	\begin{assumption}\label{assum}
		$k^2$ is not a mixed Dirichlet and Neumann eigenvalue  of $-\Delta$, i.e. of the problem
		\begin{equation}\label{mix}
			\Delta u+k^2u=0 \quad \textup{in} \; D, \qquad u=0\quad \textup{on}\; \Gamma, \qquad \partial_{\bfnu} u =0\quad \textup{on} \;  \partial D\setminus \Gamma,
		\end{equation}
	\end{assumption}
	\noindent
	We note that Assumption \ref{assum} is not a restriction since based on the interrogation frequency known to us, we can easily choose  $\partial D$ containing $\Gamma$ that satisfies  this assumption.}

\medskip

Next, we consider the following mixed Steklov eigenvalue problem:
\begin{equation}
	\Delta h+k^2h=0 \mbox{ in }D, \quad {\partial_{\bfnu} h} - \sigma h=-\lambda h \mbox{ on } \Gamma, \quad
	{\partial}_{\bfnu} h=0 \mbox{ on }\partial D\setminus \Gamma.\label{Steklov_eig}
\end{equation}
This is an extension to the Helmholtz equation of the standard sloshing problem.
To relate the  eigenvalue problem (\ref{Steklov_eig}) to the modified far field operator, we consider the injectivity of $\cF$.  
\begin{lemma}\label{F_injective}
	Assume that $\lambda\in {\mathbb C}$ is not an eigenvalue of (\ref{Steklov_eig}). Then the operator  $\cF:L^2(\mathbb{S})\to L^2(\mathbb{S})$  is injective
\end{lemma}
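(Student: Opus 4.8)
The plan is to argue directly from the hypothesis on $\lambda$: starting from $\cF g=0$, I would show that $h:=u_g|_D$ is forced to solve the mixed Steklov problem \eqref{Steklov_eig} at the parameter $\lambda$, and then use that $\lambda$ is not an eigenvalue to conclude $g=0$. First, by \Cref{pro_F} we have $\cF g = u_{g,\infty}-w^{(\lambda)}_{g,\infty}$, so $\cF g =0$ says that the scattered fields $u^s_g$ and $w^{(\lambda),s}_g$ share the same far field pattern. Both are radiating solutions of the Helmholtz equation in the unbounded connected set $\mathbb{R}^m\setminus\overline{D}$, so Rellich's lemma together with unique continuation yields $u^s_g=w^{(\lambda),s}_g$ throughout $\mathbb{R}^m\setminus\overline{D}$. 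Since both fields carry the same incident part $u^i_g$, we obtain $u_g=w^{(\lambda)}_g$ in the exterior, and hence the exterior Cauchy data of $u_g$ and $w^{(\lambda)}_g$ agree on all of $\partial D$.

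Next I would read off an interior boundary value problem for $h=u_g|_D$. In $D$ we have $\Delta h+k^2h=0$. On $\partial D\setminus\Gamma$ the auxiliary condition $\partial_{\bfnu} w^{(\lambda)}_g=0$ combined with the exterior matching gives $\partial_{\bfnu} u_g|_{+}=0$; because $u_g$ has no jump across $\partial D\setminus\Gamma$, the interior trace satisfies $\partial_{\bfnu} h=0$ there, as required by \eqref{Steklov_eig}. On $\Gamma$, the transmission conditions $\jmp{u_g}=0$ and $\jmp{\partial_{\bfnu} u_g}+\sigma u_g=0$ of \eqref{Forward_problem} combine with the impedance condition $\partial_{\bfnu} u_g|_{+}+\lambda u_g|_{+}=0$ inherited from \eqref{auxiliary_problem}: continuity gives $h=u_g|_{+}=u_g|_{-}$, while eliminating $\partial_{\bfnu} u_g|_{+}$ yields $\partial_{\bfnu} h=\partial_{\bfnu} u_g|_{-}=(\sigma-\lambda)h$, i.e. $\partial_{\bfnu} h-\sigma h=-\lambda h$ on $\Gamma$. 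Thus $h$ solves \eqref{Steklov_eig} with parameter $\lambda$, and since $\lambda$ is assumed not to be an eigenvalue, $h\equiv0$ in $D$.

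It then remains to propagate $h\equiv 0$ back to the exterior and down to $g$. From $h\equiv 0$ the interior Cauchy data of $u_g$ vanish on $\partial D$; using $\jmp{u_g}=0$ and $\jmp{\partial_{\bfnu} u_g}=-\sigma u_g=0$ on $\Gamma$, together with the absence of a jump across $\partial D\setminus\Gamma$, the exterior Cauchy data of $u_g$ also vanish on all of $\partial D$. Unique continuation for the Helmholtz equation then forces $u_g\equiv0$ in $\mathbb{R}^m\setminus\overline{D}$, hence $u_g\equiv0$ in $\mathbb{R}^m$. Consequently $u^i_g=-u^s_g$ is simultaneously a Herglotz wave function and a radiating field; since the only radiating Herglotz wave function is the trivial one, $u^i_g\equiv0$, and injectivity of the Herglotz operator $g\mapsto u^i_g$ gives $g=0$. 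This proves that $\cF$ is injective.

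The main obstacle is not the algebra above but its rigorous justification in the natural energy spaces. The fields live in $H^1_{\mathrm{loc}}$ and are generically singular at the edge $\partial\Gamma$ of the screen, so the two–sided traces $u_g|_{\pm}\in H^{1/2}$, the normal traces $\partial_{\bfnu} u_g|_{\pm}\in H^{-1/2}$, and the transmission relations must be interpreted weakly, with the correct edge behaviour, before the pointwise-looking manipulations on $\Gamma$ are licensed. One must also confirm that $\mathbb{R}^m\setminus\overline{D}$ is connected, so that Rellich's lemma plus unique continuation genuinely propagate the equality of the two fields over the whole exterior, and that the forward and auxiliary problems are well posed so that $u_g$, $w^{(\lambda)}_g$ and their far fields are well defined; the standing hypotheses $\Im(\lambda)\ge 0$ and \Cref{assum} are what secure this well-posedness in the surrounding framework.
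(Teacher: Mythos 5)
Your argument is correct and follows essentially the same route as the paper's own proof: Rellich's lemma to identify $u_g$ with $w^{(\lambda)}_g$ in the exterior, transfer of the impedance and transmission conditions to show $u_g|_D$ solves \eqref{Steklov_eig}, the non-eigenvalue hypothesis to force $u_g|_D\equiv 0$, continuation to all of $\mathbb{R}^m\setminus\Gamma$, and the radiating-versus-entire argument to conclude $g=0$. The only cosmetic difference is that you propagate the vanishing via zero Cauchy data plus unique continuation while the paper invokes real-analyticity of Helmholtz solutions on the connected set $\mathbb{R}^m\setminus\Gamma$; these are interchangeable.
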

\begin{proof}
	Suppose $\cF g=0$ for some $g\in L^2(\mathbb{S})$, by \Cref{pro_F} we have  $u_{g,\infty}=w_{g,\infty}^{(\lambda)}$ on $\mathbb{S}$, and Rellich's Lemma~\cite[Lemma 2.12]{CK2019} implies $u_g^s=w_g^{(\lambda),s}$ in $\mathbb{R}^m\setminus D$. Adding  the Herglotz incident field $u_g^i$ to both scattered fields $u_g^s$ and $w_g^{(\lambda),s}$ we have $u_g = w^{(\lambda)}_g$ in $\mathbb{R}^m\setminus{\overline{D}}$. Denoting $u_g|_{\mathbb{R}^m\setminus{\overline{D}}}$ by $u_g^+$ and $u_g|_{D}$ by $u_g^-$, then {$u^+_g=w^{(\lambda)}_g$} in $\mathbb{R}^m\setminus{\overline{D}}$. Using the boundary conditions
	for $w_g^{(\lambda)}$ we have
	\begin{eqnarray*}
		{\partial_{\bfnu} u_g^+}+\lambda u_g^+=0 \mbox{ on }\Gamma,\quad 
		{\partial_{\bfnu} u_g^+}=0  \mbox{ on }\partial D\setminus \Gamma.
	\end{eqnarray*}	
	But on $\Gamma$ we have 
	\[
	u_g:=u_g^+=u_g^- \qquad \mbox{ and } \qquad  \partial_{\bfnu} u_g^+ -\partial_{\bfnu} u_g^- +\sigma u_g=0.
	\]
	Therefore, we have 
	\[
	\partial_{\bfnu} u_g^- =(\sigma-\lambda) u^-_g \mbox{ on }\Gamma,  \qquad 
	\partial_{\bfnu} u_g^- =\partial_{\bfnu} u_g^+ = 0 \mbox{ on }\partial D\setminus \Gamma.	
	\]
	This shows that $u_g^-$ is a solution of (\ref{Steklov_eig}).
	
	If  $\lambda\in {\mathbb C}$ is not an eigenvalue of the mixed Steklov eigenvalue problem (\ref{Steklov_eig}), then $u_g^-\equiv 0$ in $D$. In this case, the analyticity of the solution of the Helmholtz equation implies that we have that $u_g=0$ in ${\mathbb R}^m\setminus \Gamma$. In addition we obtain that the  jumps of $u_g$ and $\partial_{\bfnu} u_g$ are both zero, which means that $u_g$ solves the Helmholtz equation in ${\mathbb R}^m$ and $u^s_g=-u^i_g$, which is possible only if $g=0$ since $u^s_g$ is a {radiating solution whereas $u^i_g$ is an entire solution.}
	
	Therefore, if $\lambda$ is not an eigenvalue of (\ref{Steklov_eig}), the modified far field operator $\cF$ is injective. 
\end{proof}

\begin{remark}
	From \Cref{F_injective} we know  that   if $\cF g=0$  {has} a non-trivial solution, then $\lambda$  is an eigenvalue of problem (\ref{Steklov_eig}).  Note that the converse is not necessarily true, i.e. if $\lambda$ is an eigenvalue of (\ref{Steklov_eig}), this doesn't mean that $\cF$ is not injective, which will become clear in the following section. Nevertheless the above connection between the modified far field operator $\cF$ and the eigenvalue problem  (\ref{Steklov_eig}) can be exploited to  detect these eigenvalues  from the scattering data.  		
\end{remark}

\begin{lemma}\label{dense}
	Assume that $\lambda\in {\mathbb C}$ is not an eigenvalue of problem (\ref{Steklov_eig}). Then the range of $\cF:L^2(\mathbb{S})\to L^2(\mathbb{S})$ is dense.
\end{lemma}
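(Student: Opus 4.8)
The plan is to exploit the standard duality fact that the range of a bounded operator on a Hilbert space is dense if and only if its adjoint is injective. Thus, writing $\cF^{*}:L^2(\mathbb{S})\to L^2(\mathbb{S})$ for the Hilbert-space adjoint, it suffices to show that $\cF^{*}\psi=0$ forces $\psi=0$. I would not attempt to analyze $\cF^{*}$ as a new scattering problem from scratch; instead I would relate $\cF^{*}$ back to $\cF$ itself and then invoke the injectivity already established in \Cref{F_injective}.

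First I would record the reciprocity relations for the two far field patterns entering $\cF$. Because the jump conditions on $\Gamma$ in (\ref{Forward_problem}) and the impedance/Neumann conditions in (\ref{auxiliary_problem}) are associated with symmetric (not Hermitian) bilinear forms, a Green's-identity argument gives $u_{\infty}(\hbfx;\bfd)=u_{\infty}(-\bfd;-\hbfx)$ and $w^{(\lambda)}_{\infty}(\hbfx;\bfd)=w^{(\lambda)}_{\infty}(-\bfd;-\hbfx)$ for all $\hbfx,\bfd\in\mathbb{S}$, valid even for complex $\sigma$ and $\lambda$. Consequently the kernel $\kappa(\hbfx;\bfd):=u_{\infty}(\hbfx;\bfd)-w^{(\lambda)}_{\infty}(\hbfx;\bfd)$ of $\cF$ satisfies $\kappa(\hbfx;\bfd)=\kappa(-\bfd;-\hbfx)$.

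Next, writing out $\cF^{*}$ via its kernel $\overline{\kappa(\hbfx;\bfd)}$ and using the reciprocity symmetry together with the change of variable $\hbfx\mapsto-\hbfx$ (which leaves the surface measure invariant), I would obtain the identity $(\cF^{*}\psi)(\bfd)=\overline{(\cF\phi)(-\bfd)}$, where $\phi(\hbfx):=\overline{\psi(-\hbfx)}$. Hence $\cF^{*}\psi=0$ is equivalent to $\cF\phi=0$. Under the standing hypothesis that $\lambda$ is not an eigenvalue of (\ref{Steklov_eig}), \Cref{F_injective} gives $\phi=0$, and therefore $\psi=0$. This yields injectivity of $\cF^{*}$ and hence density of the range of $\cF$.

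The step I expect to be the main obstacle is the rigorous justification of the two reciprocity relations in the screen and mixed settings: one must verify that the boundary and transmission forms are genuinely symmetric and that the far field representation (via the fundamental solution $\Phi$ and a suitable Green's identity on $\mathbb{R}^m\setminus\overline{D}$) produces the conjugation-free symmetry $\kappa(\hbfx;\bfd)=\kappa(-\bfd;-\hbfx)$ rather than a Hermitian one. Everything else is a formal manipulation of kernels followed by an appeal to \Cref{F_injective}.
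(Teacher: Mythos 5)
Your proposal is correct and follows essentially the same route as the paper: both establish density via injectivity of the adjoint, use the reciprocity relations of $u_\infty$ and $w^{(\lambda)}_\infty$ to express the adjoint through $\cF$ itself (your pointwise identity $(\cF^{*}\psi)(\bfd)=\overline{(\cF\phi)(-\bfd)}$ with $\phi(\hbfx)=\overline{\psi(-\hbfx)}$ is exactly the paper's operator identity $\cF^{*}g=\overline{R\cF R\overline{g}}$, where $Rg(\hbfx):=g(-\hbfx)$), and then invoke \Cref{F_injective}. The paper likewise treats the reciprocity relations in the screen and mixed impedance settings as the key supporting step, deferring their justification to the standard Green's-identity arguments in Colton--Kress.
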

\begin{proof}
	Let $\cF^*:L^2(\mathbb{S})\to L^2(\mathbb{S})$ be the adjoint of $\cF$, and define
	$R:L^2(\mathbb{S})\to L^2(\mathbb{S})$ by $Rg(\hbfx):=g(-\hbfx)$.  Since the far field patterns of of the solution of both  (\ref{Forward_problem}) and (\ref{auxiliary_problem}) satisfy a reciprocity
	relation (see  \cite[Theorem 8.8]{CK2019} for (\ref{Forward_problem}), a similar proof holds for (\ref{auxiliary_problem}), using (\ref{dif}) together 
	with the argument in \cite[Theorem 3.30]{CK2019} we have that
	$$\cF^* g=F^* g-F^{(\lambda)*}g={\overline{R(F-F^{(\lambda)})R\overline{g}}=\overline{R\cF R\overline{g}}}.$$
	Thus $\cF^*$ is injective if $\lambda\in {\mathbb C}$ is not an eigenvalue of (\ref{Steklov_eig},) which implies $\cF$ has dense range.
\end{proof}

Now we are in a position to define precisely the target signatures considered in this paper:
\begin{definition}[Target signatures for the screen $\Gamma$] Given a screen $\Gamma$ and a domain $D$ with $\Gamma\subset\partial D$ the target signature for the scatterer is the set of mixed Steklov eigenvalues defined by (\ref{Steklov_eig}).
\end{definition}

\section{Determination of the eigenvalues from far field data}\label{sec:deff}

We now show that mixed Steklov eigenvalues can be determined from far field data.  This involves a non-standard analysis of the exterior scattering problem.  We prove results for the convenient  Linear Sampling Method (LSM), and more precise results for the generalized LSM (GLSM).

The forward problem (\ref{Forward_problem}) is a particular case of the following transmission problem: given $\varphi \in H^{1/2}(\partial D)$, $\psi \in H^{-1/2}(\partial D)$ find $p\in H^{1}(D)$ and $p^s\in H^1_{\rm loc}(\mathbb{R}^m\setminus \overline{D})$ such that
\begin{subequations}
	\begin{eqnarray}
		\Delta p^s+k^2 p^s=0 \mbox{ in } \mathbb{R}^m\setminus \overline{D}, &\;& \Delta p+k^2 p=0 \mbox{ in } D,\quad p-p^s= \varphi   \mbox{ on }\partial D,\\
		{\partial_{\bfnu} p- \partial_{\bfnu} p^s-\sigma p=\psi}  \mbox{ on }\Gamma, &\;&
		\partial_{\bfnu} p -\partial_{\bfnu} p^s=\psi   \mbox{ on }\partial D\setminus \overline{\Gamma},\quad 
		\\
		\lim_{r\to\infty}r^{\frac{m-1}{2}}\left(\partial_r p^s -ikp^s\right)=0 && \mbox{ uniformly in }\hbfx=\bfx/r.
	\end{eqnarray}\label{transmission}
\end{subequations}

Indeed, if we take $\varphi:=u^i|_{\partial D}$ and $\psi:=\partial_{\bfnu} u^i|_{\partial D}$,  then the solution $u$ of problem (\ref{Forward_problem})
is given by $u:=p$ in $D$ and $u:=p^s+u^i$ in  $\mathbb{R}^m\setminus \overline{D}$. In addition, since $u$ is continuous across $\partial D$,  we have $u \in H^1_{\rm loc}(\mathbb{R}^m)$.  

Now we define the bounded linear operator ${\mathcal H}: L^2(\mathbb{S})\to H^{1/2}(\partial D)\times H^{-1/2}(\partial D)$ by
\begin{equation}
	{\mathcal H}(g):=\left(w_g^{(\lambda)}|_{\partial D},\; {\partial_{\bfnu} w_g^{(\lambda)}}|_{\partial D}\right), \label{bigH}
\end{equation}
where $w_g^{(\lambda)}$  is the total field  in the scattering problem (\ref{auxiliary_problem}) with the Herglotz wave function incident field $u^i:=u^i_g$; see (\ref{herg}) for the definition of $u^i_g$.  

Let  $\overline{\mbox{R}({\mathcal H})}$ denote the closure of the range of  ${\mathcal H}$ in $H^{1/2}(\partial D)\times H^{-1/2}(\partial D)$, and define  the bounded linear and  compact operator ${\mathcal G}: \overline{\mbox{R}({\mathcal H})}\to L^2(\mathbb{S})$ by
\begin{equation}\label{defG}
	{\mathcal G}: \, (\varphi,\psi)\mapsto p_\infty,
\end{equation}
where $p_\infty$ is the far field pattern of $p^s$ that solves (\ref{transmission}) ({the compactness of  ${\mathcal G}$ follows from  real analyticity of  $p_\infty(\hbfx)$}).  The discussion in  \Cref{sec:DAIP} implies the following factorization for the modified far field operator
$${{\mathcal F}={\mathcal G}{\mathcal H}.}$$
\noindent
{For any $h\in H^1(D)$ that is solution to the Helmholtz equation $\Delta h+k^2h=0$ let us define  $v\in H^1(D)$ by}
\begin{equation}\label{vpart}
	v(\bfx):=h(\bfx) {-}\int_{\Gamma}\sigma(\bfy)h(\bfy)\Phi(\bfx,\bfy)\,ds_y,
\end{equation}
which is a solution to the {Helmholtz equation in $D$}.  
Define, for $\bfx{\not \in} \Gamma$,
\begin{eqnarray*}
	p(\bfx)&:=&h(\bfx) =\displaystyle{\int_{\partial D}\left({\partial_{\bfnu} v(\bfy)}\Phi(\bfx,\bfy)-{\partial_{\bfnu}  \Phi(\bfx,\bfy)}v(\bfy)\right)\, ds_y} \\&&\quad{+}\displaystyle{\int_{\Gamma}\sigma(\bfy)h(\bfy)\Phi(\bfx,\bfy)\,ds_y}, \; {\bfx}\in D,\\
	p^s(\bfx)&:=&\displaystyle{\int_{\partial D}\left({\partial_{\bfnu} \Phi(\bfx,\bfy)}v(\bfy)- {\partial_{\bfnu}  v(\bfy)}\Phi(\bfx, \bfz)\right)\, ds_y }\\&&\quad{+}\displaystyle{\int_{\Gamma}\sigma(\bfy)h(\bfy)\Phi(\bfx,\bfy)\,ds_y}, \; \bfx\in  {\mathbb R}^m\setminus{D}.
\end{eqnarray*}
Standard results in scattering theory, particularly the jump relations for single and double layer potentials in \cite[Theorem 3.1]{CK2019}, give the following result:
\begin{lemma}\label{4}
	$(p(\bm x), p^s(\bm x))$ is the solution of (\ref{transmission}) with {$\varphi:= v|_{\partial D}$ and $\psi:= \partial_{\bfnu} v|_{\partial D}$.}
\end{lemma}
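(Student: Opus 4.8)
The plan is to verify directly that the explicitly constructed pair $(p,p^s)$ satisfies each defining relation of the transmission problem (\ref{transmission}) with $\varphi=v|_{\partial D}$ and $\psi=\partial_{\bfnu} v|_{\partial D}$, using only Green's representation theorem and the jump relations of \cite[Theorem 3.1]{CK2019}. The key organizing observation is that both formulas split into a part carried by $\partial D$ and a single-layer part carried by $\Gamma$. Setting
\[
P(\bfx):=\int_{\partial D}\big(\partial_{\bfnu} v(\bfy)\,\Phi(\bfx,\bfy)-\partial_{\bfnu}\Phi(\bfx,\bfy)\,v(\bfy)\big)\,ds_y,\qquad Q(\bfx):=\int_{\Gamma}\sigma(\bfy)h(\bfy)\Phi(\bfx,\bfy)\,ds_y,
\]
we have $p=P+Q$ in $D$ and $p^s=-P+Q$ in $\mathbb{R}^m\setminus\overline{D}$. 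Since $v$ solves the Helmholtz equation in $D$, Green's representation theorem identifies $P=v$ in $D$ and $P=0$ in $\mathbb{R}^m\setminus\overline{D}$; in particular $p=v+Q=h$ in $D$, consistent with (\ref{vpart}), and $p^s=Q$ in the exterior.

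Next I would record the Cauchy data of $P$ and $Q$ on $\partial D$. From $P=v$ inside and $P=0$ outside, the interior and exterior traces are $P|_-=v|_{\partial D}$, $P|_+=0$, $\partial_{\bfnu} P|_-=\partial_{\bfnu} v|_{\partial D}$, $\partial_{\bfnu} P|_+=0$. The single layer $Q$ is globally continuous and, having density supported on $\Gamma$, has a normal derivative that is continuous across $\partial D\setminus\overline{\Gamma}$ but jumps across $\Gamma$; with $\bfnu$ the outward normal, the single-layer jump relation gives $\partial_{\bfnu} Q|_--\partial_{\bfnu} Q|_+=\sigma h$ on $\Gamma$ and $=0$ on $\partial D\setminus\overline{\Gamma}$. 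The Helmholtz equations for $p$ and $p^s$ and the Sommerfeld radiation condition for $p^s$ are then immediate, since $\Phi(\cdot,\bfy)$ is the radiating fundamental solution and every source point lies on $\partial D$. The continuity condition across $\partial D$ follows at once, because the $Q$-contributions cancel: $(p-p^s)|_{\partial D}=P|_-+P|_+=v|_{\partial D}=\varphi$.

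For the Neumann-type interface relations I would compute $\partial_{\bfnu} p|_--\partial_{\bfnu} p^s|_+=(\partial_{\bfnu} P|_-+\partial_{\bfnu} P|_+)+(\partial_{\bfnu} Q|_--\partial_{\bfnu} Q|_+)=\partial_{\bfnu} v+(\partial_{\bfnu} Q|_--\partial_{\bfnu} Q|_+)$. On $\partial D\setminus\overline{\Gamma}$ the last bracket vanishes, yielding $\partial_{\bfnu} p-\partial_{\bfnu} p^s=\partial_{\bfnu} v=\psi$, the second interface relation. On $\Gamma$ the bracket equals $\sigma h$, so $\partial_{\bfnu} p-\partial_{\bfnu} p^s=\partial_{\bfnu} v+\sigma h$; since $p|_-=h$ on $\Gamma$, subtracting $\sigma p=\sigma h$ gives $\partial_{\bfnu} p-\partial_{\bfnu} p^s-\sigma p=\partial_{\bfnu} v=\psi$, which is exactly the jump condition on $\Gamma$. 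This exhausts all the conditions defining (\ref{transmission}).

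The step demanding the most care, and the main obstacle, is the bookkeeping of the single-layer jump of $Q$ across $\Gamma$: one must keep the orientation of $\bfnu$ and the labeling of the $\pm$ traces consistent between the $\partial D$-part and the $\Gamma$-part, so that the jump of $\partial_{\bfnu} Q$ across $\Gamma$ reproduces precisely the $\sigma p$ term and contributes nothing on $\partial D\setminus\overline{\Gamma}$. A secondary technical point is justifying the jump relations across the open piece $\Gamma$ with density $\sigma h\in L^2(\Gamma)$ and the edge behavior at $\partial\Gamma$, which is a null set and does not affect the trace identities used above.
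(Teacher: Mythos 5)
Your proof is correct and follows essentially the same route as the paper, which simply invokes ``standard results in scattering theory, particularly the jump relations for single and double layer potentials'' without writing out the details: your decomposition $p=P+Q$, $p^s=-P+Q$, the identification $P=v$ in $D$ and $P=0$ outside via Green's representation theorem, and the single-layer jump $\partial_{\bfnu}Q|_--\partial_{\bfnu}Q|_+=\sigma h$ on $\Gamma$ (zero on $\partial D\setminus\overline{\Gamma}$) are exactly the computations the paper's one-line citation compresses. The sign bookkeeping matches the paper's conventions ($p$ interior, $p^s$ exterior, $\bfnu$ outward), so all conditions of (\ref{transmission}) are verified as you state.
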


In addition, from the statement of the above lemma together with  Rellich's Lemma and the definition (\ref{defG}) of the operator ${\mathcal G}$ we have:
\begin{lemma}\label{Steklov2}
	Assume that $\lambda$ is an eigenvalue of the mixed Steklov eigenvalue problem (\ref{Steklov_eig}). Let  $w_v^{(\lambda),s}$ be the unique solution of
	\begin{subequations}
		\begin{eqnarray}
			\Delta w^{(\lambda),s}+k^2 w^{(\lambda),s} &=&0 \textup{ in } \mathbb{R}^m\setminus \overline{D}, \\
			\partial_{\bfnu} w^{(\lambda),s} &=& -\partial_{\bfnu} v \textup{ on }\partial D\setminus \Gamma,\\ 
			\partial_{\bfnu} w^{(\lambda),s}+\lambda w^{(\lambda),s}&=&-\partial_{\bfnu} v-\lambda v \textup{ on }\Gamma, \\
			\lim_{r\to\infty}r^{\frac{m-1}{2}}\left(\partial_r w^{(\lambda),s} -ikw^{(\lambda),s}\right)&=&0\textup{ uniformly in }\hbfx=\bfx/r, \;r=\Vert\bfx\Vert.
		\end{eqnarray}\label{auxiliary_problem2}
	\end{subequations} 
	Then
	$${\mathcal G}(\varphi_v,\psi_v)=0, \;\;\; \textup{with} \; \varphi_v:=(w_v^{(\lambda),s}+v)|_{\partial D}, \quad \psi_v:=\partial_{\bfnu} (w_v^{(\lambda),s}+v)|_{\partial D}$$
	{where $v$ is defined by (\ref{vpart})  with  $h:=h^{(\lambda)}$, the eigenfunction  corresponding to the eigenvalue $\lambda$}.
\end{lemma}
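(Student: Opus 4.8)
The plan is to exhibit a solution of the transmission problem (\ref{transmission}) with data $(\varphi_v,\psi_v)$ whose scattered part $p^s$ vanishes identically in $\mathbb{R}^m\setminus\overline{D}$; by the definition (\ref{defG}) of $\mathcal G$ its far field is then $p_\infty=0$, which is exactly the claim $\mathcal G(\varphi_v,\psi_v)=0$. The obvious candidate is $(p,p^s)=(h^{(\lambda)},0)$, and I would verify directly that it satisfies every line of (\ref{transmission}). With $p^s\equiv 0$ the two jump conditions on $\Gamma$ and on $\partial D\setminus\Gamma$ collapse, after inserting the boundary conditions that $w_v^{(\lambda),s}$ satisfies in (\ref{auxiliary_problem2}), to the two Steklov conditions $\partial_{\bfnu} h-\sigma h=-\lambda h$ on $\Gamma$ and $\partial_{\bfnu} h=0$ on $\partial D\setminus\Gamma$, which hold because $h=h^{(\lambda)}$ is an eigenfunction of (\ref{Steklov_eig}). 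The one substantive requirement that remains is the Dirichlet matching $p-p^s=\varphi_v$ on $\partial D$, namely $h|_{\partial D}=(w_v^{(\lambda),s}+v)|_{\partial D}$.

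Everything therefore reduces to the identity $w_v^{(\lambda),s}=U$ in $\mathbb{R}^m\setminus\overline{D}$, where $U(\bfx):=\int_{\Gamma}\sigma(\bfy)h(\bfy)\Phi(\bfx,\bfy)\,ds_y$ is the single-layer potential over $\Gamma$ with density $\sigma h$. Indeed, since (\ref{vpart}) reads $v=h-U$ in $D$, this identity gives $w_v^{(\lambda),s}|_{\partial D}=(h-v)|_{\partial D}$, i.e. precisely the Dirichlet matching above. To prove it I would check that $U$ solves the exterior impedance problem (\ref{auxiliary_problem2}) and invoke its unique solvability. The function $U$ is a radiating Helmholtz solution in $\mathbb{R}^m\setminus\overline{D}$; it is smooth across $\partial D\setminus\overline{\Gamma}$ (its density is supported on $\Gamma$), so there $\partial_{\bfnu} U$ is continuous and, combined with $\partial_{\bfnu} h=0$ and $v=h-U$, yields $\partial_{\bfnu} U=-\partial_{\bfnu} v$; on $\Gamma$ the single layer is continuous and carries the normal-derivative jump $\jmp{\partial_{\bfnu} U}=-\sigma h$ from \cite[Theorem~3.1]{CK2019}, and combining this with $v=h-U$ and the Steklov condition $\partial_{\bfnu} h-\sigma h=-\lambda h$ produces exactly $\partial_{\bfnu} U+\lambda U=-\partial_{\bfnu} v-\lambda v$.

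This is essentially the route signalled by invoking \Cref{4}: the pair given there with data $(v|_{\partial D},\partial_{\bfnu} v|_{\partial D})$ has interior part $p=h$, and its exterior part reduces to $U$ once the $\partial D$-layer-potential terms are seen to vanish in $\mathbb{R}^m\setminus\overline{D}$ by Green's representation (because $v$ solves the Helmholtz equation in $D$); Rellich's lemma then identifies this radiating field with $w_v^{(\lambda),s}$. Using $p-p^s=v$ on $\partial D$ once more gives $w_v^{(\lambda),s}|_{\partial D}=p^s|_{\partial D}=(h-v)|_{\partial D}$, the same Dirichlet matching, after which $(h^{(\lambda)},0)$ is recognized as the solution for data $(\varphi_v,\psi_v)$ and the far field is zero.

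I expect the main obstacle to be the careful bookkeeping of the single-layer jump relations on the \emph{open} surface $\Gamma\subset\partial D$: one must separate the interior and exterior traces of $U$ and of $v=h-U$, use continuity of $U$ across $\Gamma$ together with smoothness across $\partial D\setminus\overline{\Gamma}$, and pin down the sign in $\jmp{\partial_{\bfnu} U}=-\sigma h$, since it is exactly the cancellation between this jump and the Steklov boundary condition that generates the impedance condition of (\ref{auxiliary_problem2}). A secondary technical point is the unique solvability of (\ref{auxiliary_problem2}) for a possibly complex eigenvalue $\lambda$, which I would secure through the standing hypothesis $\Im(\lambda)\ge 0$ and the usual exterior energy estimate combined with Rellich's lemma.
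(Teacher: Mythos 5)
Your proposal is correct and is essentially the paper's own (largely unwritten) argument: the paper derives the lemma from \Cref{4} together with Rellich's lemma and the definition of ${\mathcal G}$, which amounts to exactly your route, since the exterior field $p^s$ of \Cref{4} reduces outside $\overline{D}$, by Green's second identity, to the single-layer potential $U=\int_\Gamma \sigma h\,\Phi(\cdot,\bfy)\,ds_y$, and uniqueness of (\ref{auxiliary_problem2}) then identifies $U$ with $w_v^{(\lambda),s}$, so that $(h^{(\lambda)},0)$ solves (\ref{transmission}) with data $(\varphi_v,\psi_v)$ and has vanishing far field. Your sign bookkeeping ($\jmp{\partial_{\bfnu}U}=-\sigma h$, the collapse of the jump conditions to the Steklov conditions, and well-posedness of (\ref{auxiliary_problem2}) via $\Im(\lambda)\ge 0$) matches the paper's conventions.
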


\begin{remark}
	{	It can be seen from the variational formula of the problem (\ref{Steklov_eig}) along with Assumption \ref{assum} that a mixed Steklov eigenvalue $\lambda$ must have nonnegative imaginary part. This guarantees the well-posedness of  problem (\ref{auxiliary_problem2}) under  the assumptions of Lemma \ref{Steklov2}.} 
\end{remark}

Note that $(\varphi_v,\psi_v)\in \overline{\mbox{R}({\mathcal H})}$. This follows from the fact that the set of Herglotz wave functions $u^i_g$ given by (\ref{herg}) is dense in the space
$$\left\{v\in H^1(D): \, \Delta v+k^2v=0 \qquad \mbox{in} \; D\right\}.$$
which is proven in \cite[Lemma 2.1]{CCH16}, together with the trace theorem and the well-posedness of (\ref{auxiliary_problem2}). 

The above reasoning makes it clear that if $\lambda$ is a modified mixed Steklov eigenvalue,  then the kernel of ${\mathcal G}$ is nontrivial but not necessarily the kernel of the modified far field operator ${\mathcal F}$ since the function $v$ in (\ref{vpart}) is not necessarily a Herglotz wave function. Nevertheless we can exploit the above relationships to determine the eigenvalues $\lambda$ from a knowledge of the modified far field operator. 
To this end, let $\Phi_\infty(\hbfx, \bfz)$ be the far field pattern of  the radiating fundamental solution $\Phi(x,z)$ to the Helmholtz equation.

\begin{lemma}\label{g1}
	Assume that $\lambda\in {\mathbb C}$ is not an eigenvalue of (\ref{Steklov_eig}). Then $\Phi_\infty(\cdot, \bm z)\in \mbox{\em R}({\mathcal G})$ for any $\bfz\in D$.
\end{lemma}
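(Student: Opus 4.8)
The plan is to exhibit $\Phi(\cdot,\bfz)$ itself as the exterior field $p^s$ of a solution of the transmission problem (\ref{transmission}) whose data lie in $\overline{\mbox{R}({\mathcal H})}$. Since $\bfz\in D$, the fundamental solution $\Phi(\cdot,\bfz)$ is a radiating solution of the Helmholtz equation in $\mathbb{R}^m\setminus\overline{D}$ whose far field pattern is, by definition, $\Phi_\infty(\cdot,\bfz)$; hence if I can produce admissible transmission data with this exterior field, then ${\mathcal G}$ evaluated on that data returns $\Phi_\infty(\cdot,\bfz)$.

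First I would construct the interior field. Working backwards from the requirement that the transmission data satisfy the homogeneous auxiliary boundary conditions ($\psi+\lambda\varphi=0$ on $\Gamma$ and $\psi=0$ on $\partial D\setminus\Gamma$), one is led to seek $p\in H^1(D)$ solving $\Delta p+k^2p=0$ in $D$, $\partial_{\bfnu} p+(\lambda-\sigma)p=\partial_{\bfnu}\Phi(\cdot,\bfz)+\lambda\Phi(\cdot,\bfz)$ on $\Gamma$, and $\partial_{\bfnu} p=\partial_{\bfnu}\Phi(\cdot,\bfz)$ on $\partial D\setminus\Gamma$. The homogeneous version of this problem is exactly the mixed Steklov eigenvalue problem (\ref{Steklov_eig}), so its only solution is the trivial one precisely because $\lambda$ is not an eigenvalue. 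Writing the problem variationally, the sesquilinear form is $H^1(D)$-coercive up to a compact perturbation (the zeroth-order volume term together with the compact traces into $L^2(\Gamma)$), so the Fredholm alternative produces a unique $p$. This is the step that uses the hypothesis.

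I would then set $p^s:=\Phi(\cdot,\bfz)$ in $\mathbb{R}^m\setminus\overline{D}$ and define $\varphi:=(p-p^s)|_{\partial D}$, together with $\psi:=\partial_{\bfnu} p-\partial_{\bfnu} p^s-\sigma p$ on $\Gamma$ and $\psi:=\partial_{\bfnu} p-\partial_{\bfnu} p^s$ on $\partial D\setminus\Gamma$. By construction $(p,p^s)$ solves (\ref{transmission}) with data $(\varphi,\psi)$, and the boundary conditions for $p$ give $\psi+\lambda\varphi=0$ on $\Gamma$ and $\psi=0$ on $\partial D\setminus\Gamma$. Consequently ${\mathcal G}(\varphi,\psi)=p_\infty=\Phi_\infty(\cdot,\bfz)$, provided the membership $(\varphi,\psi)\in\overline{\mbox{R}({\mathcal H})}$ is justified.

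The main work is precisely this membership, which I would reduce to the argument already used after Lemma \ref{Steklov2}. Let $u_\sigma(\bfx):=\int_{\Gamma}\sigma(\bfy)p(\bfy)\Phi(\bfx,\bfy)\,ds_y$ denote the single-layer potential with density $\sigma p$ on $\Gamma$, and set $v:=p-u_\sigma$ in $D$, i.e. the field (\ref{vpart}) with $h:=p$. I would then check that $W:=-\Phi(\cdot,\bfz)+u_\sigma$ is the unique radiating solution $w_v^{(\lambda),s}$ of the exterior auxiliary problem (\ref{auxiliary_problem2}) associated with this $v$. This rests on the jump relations for $u_\sigma$ — its continuity across $\partial D$ and the normal-derivative jump $\jmp{\partial_{\bfnu} u_\sigma}=-\sigma p$ on $\Gamma$ (see \cite[Theorem 3.1]{CK2019}) — together with the two boundary conditions satisfied by $p$. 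Continuity of $u_\sigma$ across $\partial D$ then yields $(W+v)|_{\partial D}=(p-\Phi(\cdot,\bfz))|_{\partial D}=\varphi$, so that $(\varphi,\psi)=(\varphi_v,\psi_v)$ in the notation of Lemma \ref{Steklov2}; membership in $\overline{\mbox{R}({\mathcal H})}$ then follows verbatim from the density of Herglotz wave functions in $\{v\in H^1(D):\Delta v+k^2v=0\}$ and the well-posedness of (\ref{auxiliary_problem2}). I expect this single-layer bookkeeping — matching the $\sigma p$ jump so that $W$ indeed solves the exterior auxiliary problem for $v$ — to be the delicate point; everything else is the Fredholm solvability above or a routine use of the definitions of ${\mathcal G}$ and ${\mathcal H}$.
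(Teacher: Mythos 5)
Your proposal is correct and follows essentially the same route as the paper: you solve the identical interior mixed Robin problem (using the non-eigenvalue hypothesis via the Fredholm alternative, exactly where the paper invokes unique solvability of the $h_z$ problem), then identify the transmission data as $(\varphi_v,\psi_v)$ for $v:=p-\int_\Gamma\sigma p\,\Phi(\cdot,\bfy)\,ds_y$ with auxiliary scattered field $u_\sigma-\Phi(\cdot,\bfz)$, and conclude membership in $\overline{\mbox{R}({\mathcal H})}$ by density of Herglotz functions -- precisely the mechanism the paper delegates to Lemma \ref{4} and the proof of Lemma \ref{Steklov2}. Your explicit single-layer bookkeeping (with the sign $v=p-u_\sigma$, consistent with (\ref{vpart})) is just a spelled-out version of what the paper leaves implicit.
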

\begin{proof}
	Let $\bfz\in D$ and, since $\lambda\in {\mathbb C}$ is not an eigenvalue of (\ref{Steklov_eig}), consider the unique solution $h_z\in H^1(D)$ to the following problem
	\begin{eqnarray*}
		&\Delta h_z+k^2h_z=0 &\mbox{ in }D,\\
		&\partial_{\bfnu} (h_z-\Phi(\cdot, \bfz))-\sigma h_z=-\lambda (h_z-\Phi(\cdot, \bfz)) & \mbox{ on } \Gamma, \\
		& \partial_{\bfnu} (h_z-\Phi(\cdot, \bfz))=0 &\mbox{ on }\partial D\setminus \Gamma.
	\end{eqnarray*}
	Now define $v_z\in H^1(D)$ by
	\begin{equation}\label{vpart1}
		v_z(x):=h_z(x)+\int_{\Gamma}\sigma(\bfy)h_z(y)\Phi(\bfx,\bfy)\,ds_y,
	\end{equation}
	which is a solution to the Helmholtz equation in $D$.  {Then, by construction using Lemma \ref{4},}
	$${\mathcal G}(\varphi_z,\psi_z)=\Phi_\infty(\cdot ,\bm z) , \;\;\; \mbox{with} \; \varphi_z:=(w_z^{(\lambda),s}+v_z)|_{\partial D}, \quad \psi_z:=\partial_{\bfnu} (w_z^{(\lambda),s}+v_z)|_{\partial D},$$
	where $w_z^{(\lambda),s}$ is the solution of (\ref{auxiliary_problem2}) with $v:=v_z$. {This can be deduced in the same way as in the proof of Lemma \ref{Steklov2}, noting that now $h$ in the definition of $v$ is not a mixed Steklov eigenfunction, but rather the solution of the mixed Steklov problem with data coming from the boundary traces of the fundamental solution $\Phi(\bfx,\bm z)$ for $\bm z \in D$. Hence the solution $p^s$ to (\ref{transmission}) corresponding to $(\varphi_z,\psi_z)$ is identically equal to $\Phi(.,\bm z)$.}
\end{proof}
\begin{lemma}\label{g2}
	Assume that $\lambda\in {\mathbb C}$ is an eigenvalue of (\ref{Steklov_eig}). Then the set of points $\bfz\in D$ for which $\Phi_\infty(\cdot, z)\in \mbox{\em R}({\mathcal G})$  is nowhere dense in $D$.
\end{lemma}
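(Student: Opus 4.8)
The plan is to show that the set in question is contained in the zero set of a single nontrivial real-analytic function on $D$, and then to invoke the fact that the zero set of a solution of the Helmholtz equation on the connected open set $D$ is nowhere dense unless the solution vanishes identically.

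First I would argue that $\Phi_\infty(\cdot,\bfz)\in \mathrm{R}({\mathcal G})$ forces the solvability of exactly the interior mixed Steklov problem for $h_{\bfz}$ used in the proof of Lemma \ref{g1}. Indeed, if $\Phi_\infty(\cdot,\bfz)={\mathcal G}(\varphi,\psi)$ for some $(\varphi,\psi)\in\overline{\mathrm{R}({\mathcal H})}$, let $(p,p^s)$ be the solution of (\ref{transmission}) with this data; since $p^s$ and $\Phi(\cdot,\bfz)$ are both radiating with the same far field, Rellich's Lemma gives $p^s=\Phi(\cdot,\bfz)$ in $\mathbb{R}^m\setminus\overline{D}$. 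The key structural point is that every pair in $\overline{\mathrm{R}({\mathcal H})}$ inherits the constraints $\psi=-\lambda\varphi$ on $\Gamma$ and $\psi=0$ on $\partial D\setminus\Gamma$: these hold for each ${\mathcal H}(g)$ because, by (\ref{auxiliary_problem}), the total field $w_g^{(\lambda)}$ satisfies $\partial_{\bfnu}w_g^{(\lambda)}+\lambda w_g^{(\lambda)}=0$ on $\Gamma$ and $\partial_{\bfnu}w_g^{(\lambda)}=0$ on $\partial D\setminus\Gamma$, and they cut out a closed subspace so they survive in the closure. Substituting $p^s=\Phi(\cdot,\bfz)$ together with these constraints into the transmission conditions (\ref{transmission}) shows that the interior part $p$ solves precisely the problem of Lemma \ref{g1}.

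Next I would extract the compatibility (Fredholm) condition. Since $\lambda$ is an eigenvalue of (\ref{Steklov_eig}), this interior problem is only conditionally solvable. Testing it against an eigenfunction $h^{(\lambda)}$ via Green's second identity and inserting the boundary conditions of both $p$ and $h^{(\lambda)}$, the symmetric boundary terms on $\Gamma$ cancel and one is left with a single scalar relation involving the traces of $\Phi(\cdot,\bfz)$ against $h^{(\lambda)}$. Combining this relation with the Green representation formula for $h^{(\lambda)}$ and the eigenfunction's boundary conditions collapses it to $v^{(\lambda)}(\bfz)=0$, where $v^{(\lambda)}$ is exactly the Helmholtz solution defined by (\ref{vpart}) with $h:=h^{(\lambda)}$. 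Hence the set of admissible $\bfz$ is contained in $\{\bfz\in D:\ v^{(\lambda)}(\bfz)=0\}$, and it remains to prove $v^{(\lambda)}\not\equiv 0$.

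Finally, suppose for contradiction $v^{(\lambda)}\equiv 0$, i.e. $h^{(\lambda)}=S$ in $D$, where $S:=\int_{\Gamma}\sigma(\bfy)h^{(\lambda)}(\bfy)\Phi(\cdot,\bfy)\,ds_y$ is a single-layer potential on $\Gamma$. Using the jump relations for $S$ and the eigenfunction conditions $\partial_{\bfnu}h^{(\lambda)}=(\sigma-\lambda)h^{(\lambda)}$ on $\Gamma$, $\partial_{\bfnu}h^{(\lambda)}=0$ on $\partial D\setminus\Gamma$, a short computation shows that $S|_{\mathbb{R}^m\setminus\overline{D}}$ is a radiating solution of the homogeneous exterior impedance problem $\partial_{\bfnu}S+\lambda S=0$ on $\Gamma$, $\partial_{\bfnu}S=0$ on $\partial D\setminus\Gamma$. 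Because $\Im(\lambda)\ge 0$ (see the remark after Lemma \ref{Steklov2}), uniqueness for this exterior problem yields $S\equiv 0$ in $\mathbb{R}^m\setminus\overline{D}$; continuity of $S$ across $\partial D$ then gives $h^{(\lambda)}=0$ on $\partial D$, and hence $\partial_{\bfnu}h^{(\lambda)}=0$ on $\partial D$ as well. Zero Cauchy data together with Holmgren's theorem force $h^{(\lambda)}\equiv 0$ in $D$, contradicting that it is an eigenfunction. Thus $v^{(\lambda)}\not\equiv 0$, its zero set is nowhere dense, and the claim follows. The main obstacle I anticipate is the first step: rigorously verifying, in the correct trace spaces for the open surface $\Gamma$, that $\overline{\mathrm{R}({\mathcal H})}$ genuinely enforces the impedance constraints so that range membership reduces to conditional solvability of the interior problem; the reduction of the Fredholm condition to $v^{(\lambda)}(\bfz)=0$ and the exterior uniqueness for complex $\lambda$ are comparatively routine but must also be handled with care on the screen.
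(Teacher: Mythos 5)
Your proposal is correct, and its skeleton coincides with the paper's own proof: reduce membership of $\Phi_\infty(\cdot,\bfz)$ in the range of ${\mathcal G}$ to solvability of the interior mixed Steklov problem with data from $\Phi(\cdot,\bfz)$, extract the compatibility condition, collapse it via the Green representation formula to $v^{(\lambda)}(\bfz)=0$ with $v^{(\lambda)}$ as in (\ref{vpart}), and then rule out $v^{(\lambda)}\equiv 0$. The differences are in execution, and each of your variants is sound. First, you derive the interior problem from the closed constraints $\psi=-\lambda\varphi$ on $\Gamma$ and $\psi=0$ on $\partial D\setminus\overline{\Gamma}$, which indeed persist under closure, whereas the paper instead characterizes elements of $\overline{\mathrm{R}({\mathcal H})}$ as traces of $w_v^{(\lambda),s}+v$ with $v\in H_{\rm inc}$ given by (\ref{Hinc}); your route is more economical since only the closed constraints are needed. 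Second, you obtain the compatibility condition by the \emph{bilinear} Green identity tested against an eigenfunction $h^{(\lambda)}$, while the paper uses the Fredholm alternative with the adjoint eigenfunction $p_z^{(\lambda)}$; these are equivalent because complex conjugation turns adjoint eigenfunctions into eigenfunctions of (\ref{Steklov_eig}) (the paper's orthogonality relation (\ref{ort}) is written with $\overline{p}_z^{(\lambda)}$ for exactly this reason), so you simply avoid introducing the adjoint problem, and the symmetric cancellation of the $(\sigma-\lambda)$ terms on $\Gamma$ that you describe does go through. Third, for non-triviality of $v^{(\lambda)}$ the paper re-uses Lemma \ref{4}: if $v\equiv 0$ then $(h^{(\lambda)},S)$ solves (\ref{transmission}) with zero data and hence vanishes by well-posedness; you instead verify via the jump relations that the single-layer potential $S$ is a radiating solution of the homogeneous exterior mixed impedance problem, invoke its uniqueness (legitimate, since eigenvalues satisfy $\Im(\lambda)\geq 0$ by the remark after Lemma \ref{Steklov2}, which is what makes (\ref{auxiliary_problem2}) well posed), and finish with Holmgren. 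This re-proves in a self-contained way what Lemma \ref{4} plus uniqueness for (\ref{transmission}) give more quickly, but it is correct. Finally, phrasing the conclusion as containment of the admissible set in the zero set of the nontrivial real-analytic function $v^{(\lambda)}$ lets you bypass the paper's contradiction setup on a dense subset of a ball and its analytic-continuation step; the two formulations are equivalent since zero sets of nontrivial Helmholtz solutions in a connected domain are nowhere dense.
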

\begin{proof}
	Assume to the contrary that $\Phi_\infty(\cdot, \bfz) \in \mbox{R}({\mathcal G})$  for $\bfz$ in a dense subset of a ball $B$ included in $D$. Then there exists  $(\varphi_z,\psi_z) \in  \overline{\mbox{R}({\mathcal H})}$ such that ${\mathcal G}(\varphi_z,\psi_z)=\Phi_\infty(\cdot, \bfz)$. 
	We call $p_z$ and $p_z^s$ the solution of  (\ref{transmission})  with $\varphi:=\varphi_z$ and $\psi:=\psi_z$.

	Following similar arguments as in the proof of \cite[Lemma 2.1]{CCH16} together with Lemma \ref{Steklov2}, one obtains that if  $\lambda$ is an eigenvalue of (\ref{Steklov_eig}), {then a pair $(\varphi,\psi)\in \overline{\mbox{R}({\mathcal H})}$ is such that}  
	$$\varphi:=(w_v^{(\lambda),s}+v)|_{\partial D}, \quad \psi:=\partial_{\bfnu} (w_v^{(\lambda),s}+v)|_{\partial D},$$
	where $w_v^{(\lambda),s}$ solves (\ref{auxiliary_problem2}) with some incident wave $v\in H_{\rm{inc}}$ where
	\begin{equation}\label{Hinc}
		H_{\rm{inc}}:=\left\{v\in H^1(D): \, \Delta v+k^2v=0\right\}.
	\end{equation}    
	Thus for the pair $(\varphi_z,\psi_z)$ we have the corresponding $v_z\in H_{\rm{inc}}$ and $w_z^{(\lambda),s}$ solving (\ref{auxiliary_problem2}) with $v:=v_z$. An application of Rellich's Lemma implies  that  $p_z=\Phi(\cdot, \bfz)$ in ${\mathbb R}^m\setminus\overline {D}$ and thus $p_z$ satisfies
	\begin{eqnarray*}
		\Delta p_z+k^2p_z&=&0 \mbox{ in }D,\\
		\partial_{\bfnu} p_z -\sigma p_z+\lambda p_z&=&\partial_{\bfnu} \Phi(\cdot, \bfz)+\lambda \Phi(\cdot, \bfz)  \mbox{ on } \Gamma, \\
		\partial_{\bfnu} p_z & =& \partial_{\bfnu} \Phi(\cdot, \bfz)   \mbox{ on }\partial D\setminus \Gamma.
	\end{eqnarray*}
	From the Fredholm alternative, the above problem is solvable if and only if
	\begin{equation}\label{ort}
		\int_{\partial D}{\partial_{\bfnu} \Phi(\cdot, \bfz)}\overline{p}_z^{(\lambda)}\, ds+ \int_{\Gamma} \lambda \Phi(\cdot, \bfz) \overline{p}_z^{(\lambda)}\, ds=0, \qquad \mbox{for all } z\in B,
	\end{equation}
	where $\overline{p}_z^{(\lambda)}$ is in the kernel of the adjoint problem, i.e. satisfies
	\begin{eqnarray*}
		\Delta  {p}_z^{(\lambda)}+k^2 {p}_z^{(\lambda)}&=&0 \mbox{ in }D, \\
		\partial_{\bfnu}  {p}_z^{(\lambda)} -\overline{\sigma}  {p}_z^{(\lambda)}+\overline{\lambda} {p}_z^{(\lambda)}&=&0   \mbox{ on } \Gamma, \\ \partial_{\bfnu}  {p}_z^{(\lambda)}&=&0 \mbox{ on }\partial D\setminus \Gamma.
	\end{eqnarray*}
	Using the boundary conditions for ${p}_z^{(\lambda)}$, (\ref{ort}) gives
	$$\int_{\partial D}\left({\partial_{\bfnu} \Phi(\cdot, \bfz)}\overline{p}_z^{(\lambda)}-{\partial_{\bfnu}  \overline{p}_z^{(\lambda)}}\Phi(\cdot, \bfz)\right)\, ds+ \int_\Gamma \sigma \overline{p}_z^{(\lambda)}\Phi(\cdot, \bfz)\,ds =0,$$
	which holds now for all $\bfz\in D$ since the left hand side as a function of $z$ is a solution to the Helmholtz equation in $D$ hence real analytic. Thus we have that 
	$$v(z):=\overline{p}_z^{(\lambda)}{-}\int_\Gamma \sigma \overline{p}_z^{(\lambda)}\Phi(\cdot, \bfz)\,ds =0.$$
	Similarly to the discussion at the beginning of this section,  $p$ and $p^s$ defined by the formula below  (\ref{vpart1})  corresponding to this $v:=0$, solve (\ref{transmission} )with zero data, hence both are zero. Thus the eigenfunction $\overline{p}_z^{(\lambda)}=0$,  which is a contradiction.
\end{proof}

Now we are ready to show how the eigenvalues, real or complex, of the mixed Steklov eigenvalue problem (\ref{Steklov_eig}) can be determined from the modified far field operator $\cF$, by means of the linear sampling method. 

To this end,  we consider the {\it modified far field equation} given by
\begin{equation}\label{mfe}
	(\cF g)(\hbfx)=\Phi_\infty(\hbfx, \bfz) \qquad \mbox{for $\bfz\in D$},
\end{equation} 
and let $v_g$ denote the Herglotz wave function with kernel $g\in L^2({\mathbb S})$ 
$$v_g(\bfx) :=\int_{\mathbb{S}}\exp(ik\bfx\cdot\bfd)g(\bfd)\,ds_d.$$
\begin{theorem} \label{lsm} Let $\{g_n^z\}_{n=0}^{\infty}$ denote a sequence in $L^2({\mathbb S})$ such that 
	\begin{equation}\label{app}
		\lim\limits_{n \to \infty}\|\cF g_n^z-\Phi_\infty(\hbfx, \bfz)\|_{L^2({\mathbb S})}=0.
	\end{equation}
	\begin{enumerate}   
		\item Assume  $\lambda$ is not an eigenvalue of (\ref{Steklov_eig}). Then, for  every $\bfz\in D$ there exists a sequence $\{g_n^z\}$ in  $L^2({\mathbb S})$ satisfying (\ref{app})
		such that $\|v_{g_n^z}\|_ {H^{1}(D)}$ is bounded.
		\item Assume  $\lambda$ is an eigenvalue of (\ref{Steklov_eig}). Then for any sequence $\{g_n^z\}$ in  $L^2({\mathbb S})$ satisfying (\ref{app}), $\|v_{g_n^z}\|_ {H^{1}(D)}$ cannot be bounded for all $\bfz$ dense in a ball $B\subset D$.  
	\end{enumerate}
\end{theorem}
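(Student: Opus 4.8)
The plan is to derive both alternatives from the factorization $\mathcal{F}=\mathcal{G}\mathcal{H}$, the range characterizations of \Cref{g1} and \Cref{g2}, and the compactness of $\mathcal{G}$. The device linking a kernel $g$ to $\mathcal{H}$ is the identity $\mathcal{H}g=T v_g$, where $v_g$ is the Herglotz wave function with kernel $g$ and $T:H_{\rm{inc}}\to H^{1/2}(\partial D)\times H^{-1/2}(\partial D)$ is the bounded solution operator sending an incident field $v$ to the Cauchy data $((w_v^{(\lambda),s}+v)|_{\partial D},\,\partial_{\bfnu}(w_v^{(\lambda),s}+v)|_{\partial D})$ of the total field, where $w_v^{(\lambda),s}$ solves (\ref{auxiliary_problem2}). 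Boundedness of $T$ follows from well-posedness of this exterior impedance problem, which holds irrespective of whether $\lambda$ is an interior Steklov eigenvalue.

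For the first assertion I would use that, when $\lambda$ is not an eigenvalue, \Cref{g1} supplies a pair $(\varphi_z,\psi_z)\in\overline{\mbox{R}(\mathcal{H})}$ with $\mathcal{G}(\varphi_z,\psi_z)=\Phi_\infty(\cdot,\bfz)$, and that by the construction in its proof $(\varphi_z,\psi_z)=T v_z$ for the explicit $v_z\in H_{\rm{inc}}$ of (\ref{vpart1}). Invoking the density of Herglotz wave functions in $H_{\rm{inc}}$ (\cite[Lemma 2.1]{CCH16}), I would choose kernels $g_n^z$ with $v_{g_n^z}\to v_z$ strongly in $H^1(D)$. Continuity of $T$ then gives $\mathcal{H}g_n^z=T v_{g_n^z}\to(\varphi_z,\psi_z)$, and continuity of $\mathcal{G}$ yields $\mathcal{F}g_n^z=\mathcal{G}\mathcal{H}g_n^z\to\Phi_\infty(\cdot,\bfz)$, which is exactly (\ref{app}). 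Because $v_{g_n^z}$ converges in $H^1(D)$, the norms $\|v_{g_n^z}\|_{H^1(D)}$ are automatically bounded.

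For the second assertion I would argue by contradiction, assuming there is a ball $B\subset D$ and a dense subset $E\subset B$ such that for each $\bfz\in E$ some sequence $g_n^z$ satisfies (\ref{app}) with $\sup_n\|v_{g_n^z}\|_{H^1(D)}<\infty$. Fixing $\bfz\in E$, since the $v_{g_n^z}$ lie in the closed, hence weakly closed, subspace $H_{\rm{inc}}$ and are bounded in $H^1(D)$, a subsequence satisfies $v_{g_n^z}\rightharpoonup v_z\in H_{\rm{inc}}$. The bounded map $T$ is weak-weak continuous, so $\mathcal{H}g_n^z=T v_{g_n^z}\rightharpoonup T v_z\in\overline{\mbox{R}(\mathcal{H})}$, and because $\mathcal{G}$ is compact it sends this weakly convergent sequence to a strongly convergent one, $\mathcal{F}g_n^z=\mathcal{G}(T v_{g_n^z})\to\mathcal{G}(T v_z)$ in $L^2(\mathbb{S})$. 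Comparison with (\ref{app}) forces $\mathcal{G}(T v_z)=\Phi_\infty(\cdot,\bfz)$, that is $\Phi_\infty(\cdot,\bfz)\in\mbox{R}(\mathcal{G})$. As this would hold for every $\bfz\in E$, the set of $\bfz\in D$ with $\Phi_\infty(\cdot,\bfz)\in\mbox{R}(\mathcal{G})$ would contain a dense subset of the ball $B$, hence could not be nowhere dense, contradicting \Cref{g2}.

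The hard part will be the weak-compactness step in the second assertion: one must upgrade mere boundedness of $\|v_{g_n^z}\|_{H^1(D)}$ into genuine convergence of the far field images $\mathcal{F}g_n^z$. This rests on two facts cooperating --- that $T$ is weak-weak continuous, so weak limits of incident fields produce weak limits of Cauchy data lying in $\overline{\mbox{R}(\mathcal{H})}$ (the domain of $\mathcal{G}$), and that $\mathcal{G}$ is compact, so these weak limits become strong convergence in $L^2(\mathbb{S})$. The remaining care is bookkeeping: confirming the weak limit $v_z$ stays in $H_{\rm{inc}}$, and checking that the ``dense in a ball'' hypothesis genuinely contradicts the nowhere-density established in \Cref{g2}.
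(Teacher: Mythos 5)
Your proposal is correct and follows essentially the same route as the paper's proof: part 1 via \Cref{g1}, the density of Herglotz wave functions in $H_{\rm inc}$, and continuity of the solution operator and $\mathcal{G}$; part 2 by contradiction, extracting a weak limit $v_z\in H_{\rm inc}$ from the bounded incident fields, using compactness of $\mathcal{G}$ to force $\Phi_\infty(\cdot,\bfz)\in\mbox{R}(\mathcal{G})$ on a dense subset of a ball, contradicting \Cref{g2}. Your only deviations are cosmetic improvements in bookkeeping --- making the solution operator $T$ with $\mathcal{H}g=Tv_g$ explicit, and spelling out why density in a ball contradicts nowhere-density --- which the paper leaves implicit.
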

\begin{proof}
	If $\lambda$ is not an eigenvalue of (\ref{Steklov_eig}), then from \Cref{g1} we have that ${\mathcal G}(\varphi_z,\psi_z)=\Phi_\infty(\cdot ,\bfz)$ with $(\varphi_z,\psi_z)\in \overline{\mbox{R}({\mathcal H})}$. Thus, there exits a sequence  $\{g_n^z\}$  in  $L^2({\mathbb S})$ such that $\left(w_{g_n^z}^{(\lambda)}|_{\partial D},\; \partial_{\bfnu} w_{g_n^z}^{(\lambda)}|_{\partial D}\right)$  converges to $(\varphi_z,\psi_z)$ in $H^{1/2}(\partial D)\times H^{1/2}(\partial D)$, where $w_{g_n}^{(\lambda)}$  is the total field  in the scattering problem (\ref{auxiliary_problem})  with the Herglotz wave function $v_{g_n^z}$ as  incident field. In particular, $v_{g_n^z}$ converges to a $v^z\in H_{\rm{inc}}$ given by (\ref{Hinc}). Hence, by continuity of ${\mathcal G}$ we have 
	$$\lim\limits_{n \to \infty}\|\cF g_n^z-\Phi_\infty(\cdot, \bfz)\|_{L^2({\mathbb S})}= \lim\limits_{n \to \infty}\|{\mathcal G}({\mathcal H}g_n^z)-\Phi_\infty(\cdot, \bfz)\|_{L^2({\mathbb S})}=0,$$
	and 
	$$ \lim_{n \to \infty}v_{g_n^z}=v^z \quad \mbox{in}\; H^1(D),$$
	which proves part {\it 1}.

	Next, assume to the contrary that $\|v_{g_n^z}\|_ {H^{1}(D)}$, with $g_n^z$ satisfying (\ref{app}), is bounded for all $z\in B$ in a ball included in $D$. This means that  $\{v_{g_n^z}\}$ converges weakly to some $v_z\in H_{\textup{inc}}$.  Let 
	$$\varphi_z:=(w_z^{(\lambda),s}+v_z)|_{\partial D}, \quad \psi_z:= \partial_{\bfnu} (w_z^{(\lambda),s}+v_z)|_{\partial D},$$
	where $w_z^{(\lambda),s}$ solves (\ref{auxiliary_problem2} )with $v:=v_z$, which {is} the weak limit of ${\mathcal H}g_n^z$ (note that the trace operator is continuous and the  solution of (\ref{auxiliary_problem2})  depends continuously of $v$). Since ${\mathcal G}$ is compact we have that ${\mathcal G}{\mathcal H}g_n^z$ converge strongly to 
	${\mathcal G}(\varphi_z, \psi_z)$ and since ${\mathcal F}g_n^z={\mathcal G}{\mathcal H}g_n^z$  we conclude that ${\mathcal G}(\varphi_z, \psi_z)=\Phi_\infty(\cdot, \bfz)$ for all $\bfz\in B\subset D$. But this is not possible from  \Cref{g2}, and the second part is proven.
\end{proof}

The above theorem states that  for every  sequence $\{g_n^z\}$ in  $L^2({\mathbb S})$ satisfying (\ref{app})  we have that $\|v_{g_n^z}\|_ {H^{1}(D)}$ is bounded  for all $z\in B\subset D$ if and only if   $\lambda$ is not an eigenvalue of (\ref{Steklov_eig}). This provides the mathematical foundation of the {\it linear sampling method} for determining these eigenvalues from $\cF$. We remark that for the existence of a sequence  $\{g_n^z\}$ in  $L^2({\mathbb S})$  satisfying (\ref{app}),  we need that $\Phi_\infty(\hbfx, \bfz)$ is in the closure of the range of $\cF$. This is guaranteed  if $\lambda$ is not an eigenvalue of (\ref{Steklov_eig}) since the range of $\cF$ is dense in $L^2({\mathbb S})$.  If $\lambda$ is an eigenvalue of (\ref{Steklov_eig}) this information is not available and may depend on $\Gamma$ and $\sigma$. Thus, if $\lambda$ is an eigenvalue of (\ref{Steklov_eig}), either such a sequence $\{g_n^z\}$ exists and  $\|v_{g_n^z}\|_ {H^{1}(D)}$ becomes unbounded, or such   $\{g_n^z\}$  does not exist in contrast to the behavior when  $\lambda$ is not an eigenvalue of (\ref{Steklov_eig}). Either way the behavior differs from when $\lambda$ is not an eigenvalue, and this fact can potentially be used in the detection of  eigenvalues.

Due to {the} ill-posedness of (\ref{mfe}), one actually solves the Tikhonov regularized version of the modified far field equation
$$ \left(\alpha I+\cF^*\cF\right)g=\cF^*\Phi_\infty(\cdot, \bfz) \qquad \mbox{for $\bfz\in D$}.$$

The problem with the above {criterion}  of determining the eigenvalues $\lambda$ is that there is no theoretical guarantee that the regularized solution $g_\alpha$ of the modified far field equation would capture the behavior of $v_g$ described by  \Cref{lsm}. Nevertheless, as our numerical examples show, this seems to be the case in practice.

The {\it {generalized} linear sampling method}, first introduced in \cite{AH2014} aims at correcting for the above theoretical deficiency of the linear sampling method. We shall present {an} alternative {criterion} to determine the eigenvalues of (\ref{Steklov_eig}) from $\cF$ based on this method which unfortunately requires some restrictive assumptions. In this framework, we have the modified far field operator ${\cF}: L^2(\mathbb{S})\to L^2(\mathbb{S})$, which assumes the factorization ${\mathcal F}={\mathcal G}{\mathcal H}$ with  ${\mathcal H}:L^2({\mathbb S})\to H^{1/2}(\partial D)\times H^{-1/2}(\partial D)$ and ${\mathcal G}:\overline{\mbox{R}(\mathcal H)}\to L^2(\mathbb{S})$. In addition let  ${\mathcal B}: L^2(\mathbb{S})\to {\mathbb R}^+$  be a continuous functional. For a given parameter $\alpha>0$ and $\phi_z:=\Phi_\infty(\cdot, \bfz)\in L^2({\mathbb S})$ we  define the  functional 
$$J_\alpha(\phi_z,g):=\alpha {\mathcal B}(g) + \left\|{\mathcal F}g-\phi_z\right\|_{L^2({\mathbb S})}^2$$
and let
\begin{equation}\label{func1}
	j_\alpha(\phi_z):=\inf\limits_{g\in L^2({\mathbb S})}J_\alpha(\phi_z,g).
\end{equation}
The  proposition below provides a characterization of the range of ${\mathcal G}$ in terms ${\mathcal F}$ and ${\mathcal B}$. The following version of the  {generalized}  linear sampling method is proven in \cite[Appendix]{ACH2017}.                        
\begin{prop}\label{GLSM}
	Assume that ${\mathcal F}$ has dense range, and for any given sequence $\{g_n\}\in L^2({\mathbb S})$ the sequence $\{{\mathcal B}(g_n)\}$ is bounded if and only if the sequence $\{\left\|{\mathcal H}g_n\right\|\}$ is bounded. Let $\{g_\alpha\}$ be a minimizing sequence   such that
	$$J_\alpha(\phi_z,g_\alpha)\leq j_\alpha(\phi_z)+C\alpha$$
	with a constant  $C>0$ independent of $\alpha$. 
	Then $\phi_z \in \mbox{R}({\mathcal G})$ if and only if the sequence $B(g_\alpha)$ is bounded as $\alpha \to 0$.
\end{prop}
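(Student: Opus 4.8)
The plan is to prove the two implications of the equivalence separately, exploiting the factorization $\mathcal{F}=\mathcal{G}\mathcal{H}$, the compactness of $\mathcal{G}$, the density of $R(\mathcal{F})$, and the two-sided equivalence between boundedness of $\{\mathcal{B}(g_n)\}$ and of $\{\|\mathcal{H}g_n\|\}$. The key preliminary observation I would record first is that $j_\alpha(\phi_z)\to 0$ as $\alpha\to 0$: given $\varepsilon>0$, density of $R(\mathcal{F})$ produces $g$ with $\|\mathcal{F}g-\phi_z\|<\varepsilon$, whence $j_\alpha(\phi_z)\le \alpha\mathcal{B}(g)+\varepsilon^2$, and letting $\alpha\to 0$ and then $\varepsilon\to 0$ gives the claim. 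Combined with the near-minimality $J_\alpha(\phi_z,g_\alpha)\le j_\alpha(\phi_z)+C\alpha$, this forces $\|\mathcal{F}g_\alpha-\phi_z\|^2\le j_\alpha(\phi_z)+C\alpha\to 0$, so $\mathcal{F}g_\alpha\to\phi_z$ strongly, \emph{irrespective} of whether $\phi_z\in R(\mathcal{G})$.

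For the direction ``$\mathcal{B}(g_\alpha)$ bounded $\Rightarrow \phi_z\in R(\mathcal{G})$'', I would use the hypothesis to convert the bound on $\mathcal{B}(g_\alpha)$ into a bound on $\|\mathcal{H}g_\alpha\|$. Since $\overline{R(\mathcal{H})}$ is a closed subspace of a Hilbert space it is weakly closed, so along a subsequence $\mathcal{H}g_{\alpha_k}\rightharpoonup\chi$ with $\chi\in\overline{R(\mathcal{H})}$. Compactness of $\mathcal{G}$ upgrades weak to strong convergence, giving $\mathcal{F}g_{\alpha_k}=\mathcal{G}\mathcal{H}g_{\alpha_k}\to\mathcal{G}\chi$. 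Matching this against the preliminary limit $\mathcal{F}g_\alpha\to\phi_z$ yields $\phi_z=\mathcal{G}\chi\in R(\mathcal{G})$.

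For the converse ``$\phi_z\in R(\mathcal{G})\Rightarrow \mathcal{B}(g_\alpha)$ bounded'', I would write $\phi_z=\mathcal{G}\chi$ with $\chi\in\overline{R(\mathcal{H})}$ and pick an approximating sequence $\mathcal{H}h_n\to\chi$ in norm; continuity of $\mathcal{G}$ then gives $\mathcal{F}h_n=\mathcal{G}\mathcal{H}h_n\to\phi_z$. Because $\{\mathcal{H}h_n\}$ converges it is bounded, so the hypothesis guarantees $\{\mathcal{B}(h_n)\}$ is bounded, say by $M$. Testing the near-minimality inequality with $\tilde g=h_{n(\alpha)}$, dropping the nonnegative term $\|\mathcal{F}g_\alpha-\phi_z\|^2$ on the left, and dividing by $\alpha$ gives
$$\mathcal{B}(g_\alpha)\le \mathcal{B}(h_{n(\alpha)})+\frac{1}{\alpha}\|\mathcal{F}h_{n(\alpha)}-\phi_z\|^2+C.$$
Choosing $n(\alpha)$ large enough that $\|\mathcal{F}h_{n(\alpha)}-\phi_z\|^2\le\alpha$ bounds the middle term by $1$, leaving $\mathcal{B}(g_\alpha)\le M+1+C$.

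The step I expect to be the crux is the forward direction's use of compactness: the whole argument hinges on converting the weak limit of $\mathcal{H}g_{\alpha_k}$ into a strong limit of $\mathcal{F}g_{\alpha_k}$, which is precisely what identifies $\phi_z$ as lying in $R(\mathcal{G})$ rather than merely in its closure. Care must also be taken to use the equivalence between boundedness of $\mathcal{B}(g_n)$ and of $\|\mathcal{H}g_n\|$ in \emph{both} directions, since the forward implication uses one half and the converse the other, and to verify that the balancing choice $n(\alpha)$ in the converse is compatible with the near-minimality estimate uniformly in $\alpha$.
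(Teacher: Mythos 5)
Your proof is correct, and it is essentially the standard GLSM argument: the paper itself does not prove \Cref{GLSM} but defers to the appendix of \cite{ACH2017}, where the proof proceeds exactly as you do --- dense range of ${\mathcal F}$ forces $j_\alpha(\phi_z)\to 0$ and hence ${\mathcal F}g_\alpha\to\phi_z$; boundedness of ${\mathcal B}(g_\alpha)$ is traded for boundedness of ${\mathcal H}g_\alpha$, whose weak limit in $\overline{\mbox{R}({\mathcal H})}$ is pushed to a strong limit by compactness of ${\mathcal G}$, identifying $\phi_z\in\mbox{R}({\mathcal G})$; and the converse tests near-minimality against an approximating sequence ${\mathcal H}h_n\to\chi$ with the balancing choice $\|{\mathcal F}h_{n(\alpha)}-\phi_z\|^2\le\alpha$. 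Your bookkeeping (using both directions of the ${\mathcal B}$--${\mathcal H}$ equivalence, and weak closedness of the closed subspace $\overline{\mbox{R}({\mathcal H})}$) is exactly what that proof requires.
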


We notice that \Cref{g1,g2} state that  $\lambda$ is an eigenvalue of (\ref{Steklov_eig}) if and only points  $\bfz$ such that  $\Phi_\infty(\cdot, \bfz)\in \mbox{R}({\mathcal G})$ are nowhere dense in $D$.  It remains to find an appropriate continuous {functional} ${\mathcal B}$ given in terms of data that satisfies the assumption in Proposition \ref{GLSM}.

The first {criterion} uses ${\mathcal B}g:= (F^{(\lambda)}g,g)_{L^2(\mathbb{S})} $ where  $F^{(\lambda)}:L^2(\mathbb{S})\to L^2(\mathbb{S})$ is  the far field operator  corresponding to the auxiliary scattering problem (\ref{auxiliary_problem2}).   In \cite[Lemma 1(i)]{ACH2017} it is shown that this ${\mathcal B}$ satisfies the assumption of  Proposition \ref{GLSM}, provided that a real value of $\lambda$ is not eigenvalue of the  mixed Steklov eigenvalue problem: 
\begin{equation}\label{Se}
	\Delta q+k^2 q=0 \quad \mbox{in } D,  \qquad \partial_{\bfnu} q +\lambda q=0 \quad \mbox{on }\Gamma, \qquad \partial_{\bfnu} q=0 \quad \mbox{on } \partial  D\setminus{\Gamma}.
\end{equation}
We remark that the proof in \cite[Lemma 1(i)]{ACH2017} is for the case when the impedance condition is on the entire boundary, i.e. $\Gamma=\partial D$. However all the necessary mathematical tools related to the properties of $F^{(\lambda)}$ developed in \cite{CCH2014} and \cite{KG2008} simply require  that $\lambda$ is in $L^\infty(\partial D)$ (including being zero on part of $\partial D$). To avoid repetition, in the following we only state the result.
\begin{theorem}\label{deterlambda1}
	Assume that  the modified far field operator ${\cF}: L^2(\mathbb{S})\to L^2(\mathbb{S})$ has dense range, and  $\lambda$ is not an eigenvalue of (\ref{Se}).  Consider
	$$J_\alpha(\Phi_\infty(\cdot, \bfz),g):=\alpha |(F^\lambda g,g)|+\left\|{\cF}g-\Phi_\infty(\cdot, \bfz)\right\|^2,$$
	and the corresponding  $j_\alpha(\Phi_\infty(\cdot, \bfz))$ given by (\ref{func1}). Let $g_\alpha^z$ be a minimizing sequence such that 
	$$J_\alpha(\Phi_\infty(\cdot, \bfz),g^z_\alpha)\leq  j_\alpha(\Phi_\infty(\cdot, \bfz)) +C\alpha,$$
	with $C>0$ a constant independent of $\alpha>0$. Then $\lambda\in {\mathbb C}$ is a mixed modified Steklov eigenvalue of (\ref{Steklov_eig}) if and only if 
	the set of points $\bfz$ such that  $|(F^\lambda g_\alpha^z,g_\alpha^z)|<\infty$ as $\alpha \to 0$  is nowhere dense in $D$.
\end{theorem}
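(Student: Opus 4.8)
The plan is to recognize the statement as a direct application of the abstract generalized linear sampling characterization in \Cref{GLSM}, specialized to the functional ${\mathcal B}(g):=|(F^{(\lambda)}g,g)_{L^2({\mathbb S})}|$, and then to convert the resulting range condition $\Phi_\infty(\cdot,\bfz)\in\mbox{R}({\mathcal G})$ into the eigenvalue dichotomy supplied by \Cref{g1,g2}. In short, the factorization ${\mathcal F}={\mathcal G}{\mathcal H}$ lets GLSM detect membership of $\Phi_\infty(\cdot,\bfz)$ in $\mbox{R}({\mathcal G})$ through the boundedness of ${\mathcal B}(g^z_\alpha)$, while \Cref{g1,g2} tie the genericity of that membership to whether $\lambda$ is a mixed Steklov eigenvalue.

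First I would verify that all hypotheses of \Cref{GLSM} hold. The factorization ${\mathcal F}={\mathcal G}{\mathcal H}$ is already in hand, dense range of ${\mathcal F}$ is assumed, and ${\mathcal B}$ is a continuous nonnegative functional on $L^2({\mathbb S})$ because $F^{(\lambda)}$ is bounded. The only substantive hypothesis is the coercivity-type equivalence required by \Cref{GLSM}: for every sequence $\{g_n\}\subset L^2({\mathbb S})$, the sequence $\{{\mathcal B}(g_n)\}$ is bounded if and only if $\{\|{\mathcal H}g_n\|\}$ is bounded. This is exactly where the assumption that $\lambda$ is not an eigenvalue of (\ref{Se}) is needed.

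The heart of the argument, and the step I expect to be the main obstacle, is this boundedness equivalence. I would obtain it from a factorization of the auxiliary far field operator of the form $F^{(\lambda)}={\mathcal H}^{*}T{\mathcal H}$ (up to the usual complex conjugations), in which $T$ acts on the trace space and the sesquilinear form associated with $T$ satisfies a G\r{a}rding inequality; the quantity $(F^{(\lambda)}g,g)$ then controls $\|{\mathcal H}g\|^{2}$ from above and below modulo compact perturbations, provided the auxiliary impedance problem is well posed. That well-posedness is precisely the statement that $\lambda$ is not an eigenvalue of the real mixed Steklov problem (\ref{Se}). This is the content of \cite[Lemma 1(i)]{ACH2017}, proved there for $\Gamma=\partial D$; as noted in the remark preceding the theorem, the underlying estimates from \cite{CCH2014,KG2008} require only $\lambda\in L^\infty(\partial D)$, so they persist when the impedance is supported on $\Gamma\subsetneq\partial D$ and a Neumann condition is imposed on $\partial D\setminus\Gamma$. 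The care required is to check that the mixed boundary conditions do not spoil the sign structure and the compactness arguments sustaining the G\r{a}rding estimate.

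With the hypotheses verified, \Cref{GLSM} gives, for each fixed $\bfz\in D$ and any minimizing sequence $g_\alpha^z$ obeying the quasi-minimality bound $J_\alpha\le j_\alpha+C\alpha$, the pointwise-in-$\bfz$ equivalence
\[
\Phi_\infty(\cdot,\bfz)\in\mbox{R}({\mathcal G}) \quad\Longleftrightarrow\quad |(F^{(\lambda)}g_\alpha^z,g_\alpha^z)|\ \text{is bounded as }\alpha\to 0 .
\]
It then remains only to invoke \Cref{g1,g2}. If $\lambda$ is not an eigenvalue of (\ref{Steklov_eig}), then \Cref{g1} yields $\Phi_\infty(\cdot,\bfz)\in\mbox{R}({\mathcal G})$ for every $\bfz\in D$, so by the equivalence the set of $\bfz$ with $|(F^{(\lambda)}g_\alpha^z,g_\alpha^z)|<\infty$ is all of $D$, which is not nowhere dense. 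Conversely, if $\lambda$ is an eigenvalue of (\ref{Steklov_eig}), \Cref{g2} shows $\{\bfz:\Phi_\infty(\cdot,\bfz)\in\mbox{R}({\mathcal G})\}$ is nowhere dense in $D$, and since the displayed equivalence identifies this set exactly with $\{\bfz:|(F^{(\lambda)}g_\alpha^z,g_\alpha^z)|<\infty\}$, the latter is nowhere dense. Contraposing these two implications delivers the stated ``if and only if''. Because the GLSM characterization is applied separately at each $\bfz$, no uniformity of the constant $C$ in $\bfz$ is needed; only the continuity of $\bfz\mapsto\Phi_\infty(\cdot,\bfz)$, already used implicitly, is required.
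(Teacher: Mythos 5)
Your proposal is correct and follows essentially the same route as the paper: the paper likewise proves this theorem by applying \Cref{GLSM} with ${\mathcal B}(g)=|(F^{(\lambda)}g,g)_{L^2({\mathbb S})}|$, justifying the boundedness equivalence ${\mathcal B}(g_n)$ bounded $\Leftrightarrow$ $\|{\mathcal H}g_n\|$ bounded via \cite[Lemma 1(i)]{ACH2017} (extended to the mixed impedance--Neumann setting since the tools of \cite{CCH2014,KG2008} only need $\lambda\in L^\infty(\partial D)$, which is where the hypothesis that $\lambda$ is not an eigenvalue of (\ref{Se}) enters), and then converting the range characterization of ${\mathcal G}$ into the eigenvalue dichotomy through \Cref{g1,g2}. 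The paper in fact omits the detailed verification ``to avoid repetition,'' so your sketch of the factorization-based G\r{a}rding argument fills in precisely the step the paper delegates to the cited reference.
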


The assumption that $\lambda$ is not an eigenvalue of (\ref{Se}) is somewhat restrictive for $\sigma$ real valued  because it can happen that a real $\lambda$ could be simultaneously eigenvalue of  (\ref{Se})  as well a value of interest, i.e. an eigenvalue of (\ref{Steklov_eig}). {To define a second criterion, we recall that or any operator {$T: \mathcal W\to \mathcal W$}, where {$\mathcal W$} is a  complex Hilbert space with adjoint {$T^*:\mathcal W\to \mathcal W$} we define
	$$\Re(T):=\frac{1}{2}(T+T^*), \quad \Im(T):=\frac{1}{2i}(T-T^*).$$
	Then if  $F:L^2(\mathbb{S})\to L^2(\mathbb{S})$ is  the far field operator  corresponding to the physical scattering problem (\ref{Forward_problem},) we set $$F_\#:=|\Re(F)|-\Im(F).$$}
The second {criterion} uses {${\mathcal B}g:=(F_\#g,g)_{L^2(\mathbb{S})}$.}

We must show that this choice also satisfies  the assumptions of  Proposition \ref{GLSM}. For this we need to assume that $\Gamma$ is such that {there is no non-trivial Herglotz wave function} $v_g$ such that $v_g|_{\Gamma}=0$, i.e. $H$ is injective.   {This is a geometric assumption on $\Gamma$. Open surfaces or arcs that do not satisfy this assumption cannot be fully characterized. Moreover, the assumption excludes $\Gamma$ being part of a straight line or plane, or part of a circle/sphere of certain particular radius.} 

We now adopt the notation
$$H^{\demi}(\Gamma):=\{u|_{\Gamma}: u \in H^{\demi}(\partial D)\}, \qquad \tilde H^{\demi}(\Gamma):=\{u \in
H^{\demi}(\Gamma): \mbox{supp} \,u \subseteq \overline{\Gamma} \},$$
i.e. in other
words, $\tilde H^{\demi}(\Gamma)$ contains functions $u \in H^{\demi}(\Gamma)$ such that their extension by zero to the whole
boundary $\partial D$ is in $H^{\demi}(\partial D)$. It is known \cite{McLean} that  $H^{-\demi}(\Gamma)$ is the dual space of $\tilde H^{\demi}(\Gamma)$ and $\tilde H^{-\demi}(\Gamma)$ is the dual space of $H^{\demi}(\Gamma)$, and that we have the following chain of dense and compact embedings
$$\tilde H^{\demi}(\Gamma)\subset
H^{\demi}(\Gamma)\subset L^2(\Gamma)\subset \tilde
H^{-\demi}(\Gamma)\subset H^{-\demi}(\Gamma).$$
We start by factorizing $F$ as
$$F={-G\sigma H},$$ where  $H:L^2({\mathbb S})\to H^{\demi}(\Gamma)\subset \tilde H^{-\demi}(\Gamma)$ is given by
$$H:g\mapsto v_g|_{\Gamma}, \quad \mbox{with $v_g(\bfx):=\int_{\mathbb{S}}g(\bfd)e^{ik\bfx\cdot\bfd}\,ds_d$},$$
and $G:\tilde H^{-\demi}(\Gamma)\to L^2({\mathbb S})$ is given by
$$G:h\mapsto v_{\infty},$$ where $v_\infty$ is the far field pattern of $v\in H^1_{\textup{loc}}({\mathbb R}^m)$ satisfying
\begin{subequations}
	\begin{eqnarray}
		\Delta v+k^2 v=0  \mbox{ in } \mathbb{R}^m\setminus \Gamma,&\;&  \\
		\jmp{\partial_{\bm \nu} v}+\sigma v=h \mbox{ on }\Gamma, &\;&   \jmp{{v}}=0  \mbox{ on }\Gamma,\\
		\lim_{r\to\infty}r^{\frac{m-1}{2}}\left({\partial_r {v}}-ik{v}\right)=0  &&\mbox{ uniformly in }\hbfx=\bfx/r.
	\end{eqnarray}\label{Forward_problem2}
\end{subequations}

The adjoint operator $H^*:\tilde H^{-\demi}(\Gamma)\to L^2({\mathbb S})$ is given by 
$$H^*\varphi(\hat x):=\int_\Gamma \varphi(\bfy) e^{-ik \hbfx \cdot \bfy}\, ds_y.$$
Here and {in} what follows the superscript $*$ denotes the adjoint in the $H^{-\demi}(\Gamma)$-inner product. From the assumption we have that $H^*$ has dense range in $L^2({\mathbb S})$. Note that $\jmp{\partial_{\bfnu} v}\in \tilde
H^{-\demi}(\Gamma)$. On the other hand  from the jump relations for the single layer potential, we see that 
$$
{{\mathcal S}\varphi(\bfx)}:=\int_{\Gamma}\varphi(\bfy)\Phi(\bfx,\bfy)\,ds_y, \qquad \varphi\in \tilde H^{-\demi}(\Gamma),
$$
satisfies (\ref{Forward_problem2}) with $h:=(-I+\sigma S_{\Gamma})\varphi$, where $S_{\Gamma}: \tilde H^{-\demi}(\Gamma)\to  H^{\demi}(\Gamma)\subset\tilde H^{-\demi}(\Gamma)$ is the boundary integral operator $S\varphi(x):=\int_{\Gamma}\varphi(y)\Phi(x,y)\,ds_y$ restricted to $\Gamma$ (see \cite{McLean}) and $I$ is the identity operator.  Obviously ${\mathcal S}_\infty\varphi=H^*\varphi$. Thus we have the factorization
\begin{equation}\label{hg}
	H^*=-G(I-\sigma S_{\Gamma}).
\end{equation}
\begin{lemma}\label{lemG}
	Assume that $\Gamma$ is such that  there is no {non-trivial} Herglotz wave function $v_g$ such that $v_g|{\Gamma}=0$. Then $G:\tilde H^{-\demi}(\Gamma)\to L^2({\mathbb S})$  is injective with dense range. 
\end{lemma}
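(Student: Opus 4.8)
The plan is to establish injectivity and dense range as two essentially separate arguments, with the geometric assumption on $\Gamma$ entering only the second part, through the factorization (\ref{hg}).

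For injectivity, I would take $h\in\tilde H^{-\demi}(\Gamma)$ with $Gh=0$ and let $v$ be the associated radiating solution of (\ref{Forward_problem2}). Since $Gh=v_\infty=0$, Rellich's Lemma forces $v$ to vanish outside any ball containing $\Gamma$. The key point is that $\Gamma$ is an \emph{open} surface, so its complement $\mathbb{R}^m\setminus\Gamma$ is connected; unique continuation (real analyticity of solutions of the Helmholtz equation) then propagates the vanishing of $v$ throughout $\mathbb{R}^m\setminus\Gamma$. Consequently both one-sided traces of $v$ and of $\partial_{\bfnu}v$ on $\Gamma$ vanish, so $\jmp{v}=0$, $\jmp{\partial_{\bfnu}v}=0$ and $v|_\Gamma=0$. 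The transmission condition $\jmp{\partial_{\bfnu}v}+\sigma v=h$ then yields $h=0$, proving that $G$ is injective. Note that this half does not require the geometric assumption on $\Gamma$.

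For dense range, I would exploit the factorization (\ref{hg}), namely $H^*=-G(I-\sigma S_\Gamma)$. Since $I-\sigma S_\Gamma$ maps $\tilde H^{-\demi}(\Gamma)$ into itself, for each $\varphi$ we have $H^*\varphi=G\bigl(-(I-\sigma S_\Gamma)\varphi\bigr)$ with $-(I-\sigma S_\Gamma)\varphi$ in the domain of $G$, giving the range inclusion $\mbox{R}(H^*)\subseteq\mbox{R}(G)$. The geometric assumption, namely that $H$ is injective, gives by the standard duality $\overline{\mbox{R}(H^*)}=(\ker H)^\perp$ that $H^*$ has dense range in $L^2(\mathbb{S})$, a fact already recorded in the text preceding the lemma. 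Combining the two, $L^2(\mathbb{S})=\overline{\mbox{R}(H^*)}\subseteq\overline{\mbox{R}(G)}$, so $G$ has dense range.

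The argument is short because most of the work is carried by the factorization and by the previously established well-posedness of (\ref{Forward_problem2}), which makes $G$ well defined and guarantees that $v$ is a genuine radiating solution. The step I would be most careful about—the main potential obstacle—is the unique continuation in the injectivity part: one must confirm that the complement of the open screen $\Gamma$ is indeed connected, so that vanishing near infinity propagates across $\Gamma$, and that the one-sided traces are well-defined elements of the relevant Sobolev spaces, so that the transmission condition can be read off to conclude $h=0$.
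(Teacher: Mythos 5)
Your proof is correct, and for the dense-range half it takes a genuinely lighter route than the paper. The paper's own proof dispatches injectivity in one line (``follows from the well-posedness of the direct scattering problem''), of which your Rellich + unique-continuation + jump-condition argument is exactly the fleshed-out version; your care about connectedness of $\mathbb{R}^m\setminus\overline{\Gamma}$ and about reading $h=\jmp{\partial_{\bfnu}v}+\sigma v$ off the transmission condition is the right substance behind that one line. For dense range, however, the paper does more work than you do: it proves that $I-\sigma S_{\Gamma}$ is \emph{bijective} on $\tilde H^{-\demi}(\Gamma)$ (compactness of $\sigma S_{\Gamma}$ plus a Fredholm argument, with injectivity shown by a single-layer-potential uniqueness argument), and concludes the exact range identity $\mbox{R}(G)=\mbox{R}(H^*)$. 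You observe, correctly, that for density one only needs the inclusion $\mbox{R}(H^*)\subseteq\mbox{R}(G)$, which is immediate from the factorization $H^*=-G(I-\sigma S_{\Gamma})$ since $(I-\sigma S_{\Gamma})$ maps $\tilde H^{-\demi}(\Gamma)$ into itself; combined with the density of $\mbox{R}(H^*)$ coming from the geometric assumption, this finishes the lemma with no Fredholm theory at all. What the paper's heavier argument buys is the invertibility of $I-\sigma S_{\Gamma}$ itself, which is not a luxury: it is precisely what legitimizes the subsequent factorization $F=H^*(I-\sigma S_{\Gamma})^{-1}\sigma H$ in (\ref{fac0}) and the operator $A:=(\overline{\sigma}I-|\sigma|^2S_\Gamma)^{-1}$ of \Cref{propop}. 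So your proof fully establishes the stated lemma, but if one adopted it in place of the paper's, the bijectivity argument would still have to be supplied before \Cref{propop} and \Cref{thter} could be used.
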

\begin{proof}
	{Injectivity of $G$ follows from the well-posedeness of the direct scattering problem.} The range of $G$ coincides with the range of $H^*$ since the operator $I-\sigma S_{\Gamma}: \tilde H^{-\demi}(\Gamma)\to \tilde H^{-\demi}(\Gamma)$ is {bijective}. To show this, we notice that $\sigma S_{\Gamma}: \tilde H^{-\demi}(\Gamma)\to \tilde H^{-\demi}(\Gamma)$ is compact due to compact embedding of  $H^{\demi}(\Gamma)$ into $\tilde H^{-\demi}(\Gamma)$. Hence, it suffices to prove injectivity. To this end let {$\varphi_0$} be such that $(I-\sigma S_{\Gamma})\varphi_0=0$, and consider 
	$$w(\bfx):=\int_{\Gamma}\varphi_0(\bfy)\Phi(\bfx,\bfy)\,ds_y$$
	which satisfies (\ref{Forward_problem2} )with $h=0$, thus $w=0$ in ${\mathbb R^m}\setminus \Gamma$. Again the jump relation for the normal derivative of the single layer potential implies implies that $\varphi_0=0$. Then the result follows from the fact that the range of $H^*$ is dense.
\end{proof}

As a consequence of the above lemma and the previous discussion we have
\begin{prop}\label{cor3} Assume that $\Gamma$ is such that  there is no {non-trivial} Herglotz wave function $v_g$ such that $v_g|{\Gamma}=0$. Then the operators $G^*: L^2({\mathbb S})\to \tilde H^{-\demi}(\Gamma)$, $H: L^2({\mathbb S})\to \tilde H^{-\demi}(\Gamma)$ and $H^*: \tilde H^{-\demi}(\Gamma)\to L^2({\mathbb S})$ are   injective with dense range.
\end{prop}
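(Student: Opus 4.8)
The plan is to derive the whole statement from the standard Hilbert-space duality between injectivity and density of range, bootstrapping off what \Cref{lemG} already establishes for $G$ and for the operator $I-\sigma S_\Gamma$. Recall that for a bounded operator $T$ between Hilbert spaces one has $\overline{\mathrm{Ran}(T)}=\ker(T^*)^\perp$, so $T$ has dense range if and only if $T^*$ is injective, and (applying the same identity to $T^*$) $T^*$ has dense range if and only if $T$ is injective. Since every adjoint here is taken in the $H^{-\demi}(\Gamma)$ inner product, I would first record that the pairing is arranged so that $(T^*)^*=T$ for each operator in play, which is exactly what licenses these equivalences.

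I would dispatch $G^*$ first. \Cref{lemG} gives that $G:\tilde H^{-\demi}(\Gamma)\to L^2(\mathbb S)$ is injective with dense range. By the duality just recalled, injectivity of $G$ forces $G^*$ to have dense range, while density of the range of $G$ forces $G^*$ to be injective. Hence $G^*$ is simultaneously injective and densely ranged, with no further argument.

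For $H$ and $H^*$ I would argue the injectivity of each directly and then read off the two density statements by duality. Injectivity of $H$ is precisely the geometric hypothesis: if $Hg=v_g|_\Gamma=0$, the assumption rules out $v_g$ being a nontrivial Herglotz wave function, so $v_g\equiv0$, and since $g\mapsto v_g$ is injective on $L^2(\mathbb S)$ this yields $g=0$. Injectivity of $H^*$ I would obtain from the factorization $H^*=-G(I-\sigma S_\Gamma)$ of (\ref{hg}): by \Cref{lemG} the operator $I-\sigma S_\Gamma$ is bijective and $G$ is injective, so their composition is injective. Duality then upgrades these two facts: $H$ injective gives $H^*$ dense range (as already noted in the text preceding \Cref{lemG}), and $H^*$ injective gives $H$ dense range. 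This closes all three operators.

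The only genuinely delicate point, and the step I would treat most carefully, is the bookkeeping of the adjoints: the operators act within the scale $\tilde H^{-\demi}(\Gamma)\subset H^{-\demi}(\Gamma)$ and $L^2(\mathbb S)$, and the superscript $*$ denotes the adjoint with respect to the $H^{-\demi}(\Gamma)$ inner product rather than an $L^2$-pairing. I would verify that, under this convention, each operator and its stated adjoint are indeed mutual adjoints, so that the abstract equivalence ``$T$ has dense range $\Leftrightarrow$ $T^*$ injective'' genuinely applies, and that the ranges land in the claimed spaces. Everything else is an immediate consequence of \Cref{lemG} and the factorization (\ref{hg}), together with the injectivity of the Herglotz map $g\mapsto v_g$; no new analysis of the scattering problem is required.
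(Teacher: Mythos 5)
Your proposal is correct and follows essentially the same route as the paper, which states the proposition as an immediate consequence of \Cref{lemG} and the preceding discussion: injectivity of $H$ from the geometric assumption, injectivity of $H^*$ via the factorization (\ref{hg}) with $I-\sigma S_\Gamma$ bijective, and the remaining injectivity/dense-range claims by the standard Hilbert-space duality $\overline{\mathrm{R}(T)}=\ker(T^*)^\perp$ (with the caveat, which you rightly flag, that all adjoints are taken in the $H^{-\demi}(\Gamma)$-inner product). Nothing in your argument deviates from or adds to what the paper's implicit proof requires.
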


\noindent
Combining both above factorizations  we have
\begin{equation}\label{fac0}
	F=H^*(I-\sigma S_{\Gamma})^{-1}\sigma H=H^*\overline{\sigma}(\overline{\sigma}I-|\sigma|^2S_\Gamma)^{-1}\sigma H.
\end{equation}

\medskip
\noindent
Now we may verify the following:
\begin{lemma}\label{propop}
	Let $A:=(\overline{\sigma}I-|\sigma|^2S_\Gamma)^{-1}: \tilde H^{-\demi}(\Gamma)\to \tilde H^{-\demi}(\Gamma)$, and assume that $\Re(\sigma(\bm x))\geq \sigma_0>0$ for $\bfx\in \Gamma$. The following properties hold true:
	\begin{enumerate}
		\item[(i)] $\Im\left(\varphi, A\varphi\right)_{H^{-\demi}(\Gamma)}{\le } 0$.
		\item[(ii)] $\Re(A)=A_0+C$, where $C: \tilde H^{-\demi}(\Gamma)\to \tilde H^{-\demi}(\Gamma)$    is compact and $A_0$ satisfies
		$$ \mbox{$\left(\varphi, A_0\varphi\right)\geq c_0\|\varphi\|^2_{H^{-\demi}(\Gamma)}$ \;\;\; \; for all \;\; $\varphi \in\overline{\mbox{\em R}(H)}=\tilde H^{-\demi}(\Gamma)$}.$$      
		\item[(iii)] $A$  is injective  in $\overline{\mbox{\em R}(H)}=\tilde H^{-\demi}(\Gamma)$.                                                  
	\end{enumerate}
\end{lemma}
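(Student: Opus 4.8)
The plan is to reduce all three statements to two ingredients already available in the excerpt: the mapping and energy properties of the single-layer operator $S_\Gamma$, and the pointwise bounds $\Re\sigma\ge\sigma_0>0$ and $\Im\sigma=k\alpha_2\ge0$ on the impedance. The starting observation is the factorization $\overline\sigma I-|\sigma|^2S_\Gamma=\overline\sigma(I-\sigma S_\Gamma)$, so that $A=(I-\sigma S_\Gamma)^{-1}\overline\sigma^{-1}$, where $\overline\sigma^{-1}$ is multiplication by $1/\overline\sigma$ (bounded since $|\sigma|\ge\sigma_0$). Item (iii) is then immediate: in the proof of \Cref{lemG} the operator $I-\sigma S_\Gamma:\tilde H^{-\demi}(\Gamma)\to\tilde H^{-\demi}(\Gamma)$ was shown to be bijective, and multiplication by $\overline\sigma$ is a bijection, so $A$ is a genuine inverse and hence injective on all of $\overline{\mbox{R}(H)}=\tilde H^{-\demi}(\Gamma)$.

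For (ii) I would first record that $S_\Gamma:\tilde H^{-\demi}(\Gamma)\to H^{\demi}(\Gamma)$ is smoothing and that $H^{\demi}(\Gamma)$ embeds compactly into $\tilde H^{-\demi}(\Gamma)$; hence $\sigma S_\Gamma$ and $|\sigma|^2S_\Gamma$ are compact on $\tilde H^{-\demi}(\Gamma)$, exactly as used in \Cref{lemG}. Writing $(I-\sigma S_\Gamma)^{-1}=I+(I-\sigma S_\Gamma)^{-1}\sigma S_\Gamma$ and multiplying by $\overline\sigma^{-1}$ yields the decomposition $A=\overline\sigma^{-1}I+K$ with $K$ compact. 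Taking real parts, $\Re(A)=\Re(\overline\sigma^{-1})\,I+\Re(K)$, and $\Re(\overline\sigma^{-1})=\Re\sigma/|\sigma|^2\ge\sigma_0/\|\sigma\|_{L^\infty}^2>0$. The coercive piece $A_0$ is thus multiplication by the positive, bounded-below function $\Re(\overline\sigma^{-1})$, while $\Re(K)$ (together with the defect between multiplicative coercivity in $L^2(\Gamma)$ and in $H^{-\demi}(\Gamma)$) is absorbed into the compact remainder $C$. The $H^{-\demi}$ coercivity of $A_0$ up to compact is a G\aa rding inequality for a positive zeroth-order multiplier, which I would justify by a commutator/partition argument, and this is where some regularity of $\sigma$ is implicitly used.

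Item (i) is the delicate one and should come from an energy identity rather than symbol considerations, since the single-layer contribution is compact yet still carries a sign. Given $\varphi$, set $\psi=A\varphi$ and form the single-layer potential $v={\mathcal S}\psi$, which solves (\ref{Forward_problem2}) with datum $h=-(I-\sigma S_\Gamma)\psi=-\overline\sigma^{-1}\varphi$ and satisfies $\jmp{\partial_{\bfnu}v}=-\psi$ and $v|_\Gamma=S_\Gamma\psi$. Applying Green's first identity in $B_R\setminus\Gamma$, using $\jmp{v}=0$ and letting $R\to\infty$ with the Sommerfeld condition, turns the sphere term into $k\|v_\infty\|_{L^2(\mathbb S)}^2$; substituting $\jmp{\partial_{\bfnu}v}=h-\sigma v$ and taking imaginary parts yields $\Im\int_\Gamma v\,\overline h=-k\|v_\infty\|_{L^2(\mathbb S)}^2-\int_\Gamma k\alpha_2|v|^2\le0$, so the radiated energy and the absorption term $\Im\sigma=k\alpha_2\ge0$ push in the same direction.

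The main obstacle is the bookkeeping of inner products. The quantity controlled cleanly by the energy identity is the $L^2(\Gamma)$-duality pairing $\int_\Gamma v\,\overline h$, whereas the statement is phrased with the $H^{-\demi}(\Gamma)$ inner product, in which the adjoint $*$ and the factorization $F=(\sigma H)^*A(\sigma H)$ are taken. The crux of a complete proof is therefore to track the multiplicative weight $\sigma$ through $S_\Gamma$ and through the Riesz identification of $H^{-\demi}(\Gamma)$ so that $\Im(\varphi,A\varphi)_{H^{-\demi}(\Gamma)}$ is identified, up to a positive factor, with $\Im\int_\Gamma v\,\overline h$; once this identification is in place, the sign $\le0$ is exactly the inequality above. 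I expect this reconciliation, rather than the compactness in (ii) or the essentially automatic injectivity in (iii), to be where the real work lies.
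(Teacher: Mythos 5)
Parts (ii) and (iii) of your proposal are correct and essentially coincide with the paper's own proof: the same resolvent identity $(I-\sigma S_\Gamma)^{-1}=I+\sigma S_\Gamma(I-\sigma S_\Gamma)^{-1}$ (you write the factors in the other order), the same compactness argument through the compact embedding of $H^{\demi}(\Gamma)$ into $\tilde H^{-\demi}(\Gamma)$, the same coercive piece $A_0=\frac{\Re(\sigma)}{|\sigma|^2}I$, and injectivity of $A$ simply because it is defined as the inverse of a bijection. Your caveat that coercivity of a variable positive multiplier in the $H^{-\demi}(\Gamma)$ inner product needs an argument is legitimate; the paper passes over this silently, so on this point you are if anything more careful than the source.

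Part (i), however, has a genuine gap, and you named it yourself: you never connect your energy identity to the quantity $\Im\left(\varphi,A\varphi\right)_{H^{-\demi}(\Gamma)}$ that the lemma is actually about, and the identification you hope for (``up to a positive factor'') is not just bookkeeping --- it is not available. Your Green's identity, applied to $v={\mathcal S}\psi$ with $\psi=A\varphi$ and datum $h=-\overline{\sigma}^{-1}\varphi$, controls $\Im\int_\Gamma \sigma^{-1}\left(S_\Gamma A\varphi\right)\overline{\varphi}\,ds$, a pairing twisted by the complex weight $1/\sigma$; no positive multiple relates this to the inner-product form of $A$. The missing step is purely algebraic, and it is how the paper proceeds: since $A=B^{-1}$ with $B:=\overline{\sigma}I-|\sigma|^2S_\Gamma$, set $\psi:=A\varphi$ and use conjugate symmetry, $\left(\varphi,A\varphi\right)_{H^{-\demi}}=\left(B\psi,\psi\right)_{H^{-\demi}}=\overline{\left(\psi,B\psi\right)}_{H^{-\demi}}$, so that (i) reduces to showing $\Im\left(\psi,B\psi\right)_{H^{-\demi}}\geq 0$. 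That sign follows from $\Im(\sigma)\geq 0$ together with the sign property of the single layer operator, $\Im\left<\phi,S\phi\right>_{\tilde H^{-\demi},H^{\demi}}<0$ for $\phi\neq 0$, which the paper simply cites from \cite[Lemma 1.14]{KG2008} --- and which is exactly what your Green's-identity computation proves if you run it for the pure single-layer potential, i.e.\ for the form $\left<\psi,S_\Gamma\psi\right>$, without dragging the $\sigma$-dependent datum into the pairing. So your energy argument is the right tool aimed at the wrong quadratic form: redirect it at $B=A^{-1}$ and then flip the sign by conjugate symmetry. (One further remark: both your sketch and the paper's computation pull the variable functions $\Im(\sigma)$ and $|\sigma|^2$ out of the $H^{-\demi}$ pairings as if they were constants; strictly speaking both arguments are only clean for constant $\sigma$, so this is a shared, not a new, defect.)
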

\begin{proof}
	We have 
	\begin{eqnarray*}
		\Im\left(\varphi, (\overline{\sigma}I-|\sigma|^2S_\Gamma)\varphi\right)_{H^{-\demi}(\Gamma)}&=&\Im(\sigma) {\|\varphi\|_{H^{-\demi}(\Gamma)}^2}-|\sigma|^2\Im\left(\varphi, S_\Gamma\varphi\right)_{H^{-\demi}(\Gamma)}\\
		&=&\Im(\sigma) {\|\varphi\|_{H^{-\demi}(\Gamma)}^2}-|\sigma|^2\Im\left<\varphi, S_\Gamma\varphi\right>_{\tilde H^{-\demi}, H^{\demi}}>0,
	\end{eqnarray*}
	where $\left<\cdot, \cdot\right>$ denotes the duality $H^{-\demi}(\Gamma), H^{\demi}(\Gamma)$. This follow from  the fact that $\Im(\sigma)\geq 0$ and the known fact  \cite[Lemma 1.14]{KG2008}  that 
	$$\Im\left<\phi, S\phi\right>_{\tilde H^{-\demi}(\partial D), H^{\demi}(\partial D)}<0 \qquad \mbox{for all $\phi \in \tilde H^{-\demi}(\phi)$ with $\phi\neq 0$},$$ 
	and that $\varphi\in H^{-\demi}(\Gamma)$ can be extended by zero to the whole $\partial D$ in $H^{-\demi}(\partial D)$.  The statement then follows form the fact that $A$ is the inverse of $(\overline{\sigma}I-|\sigma|^2S_\Gamma)$.

	Concerning the second part, we observe that  
	$$A:=(\overline{\sigma}I-|\sigma|^2S_\Gamma)^{-1}=\overline{\sigma}^{-1}(I-\sigma S_{\Gamma})^{-1}=\overline{\sigma}^{-1}(I+\sigma S_\Gamma(I-\sigma S_{\Gamma})^{-1}),$$
	where $\sigma S_\Gamma(I-\sigma S_{\Gamma})^{-1}$ is compact as product of the bounded operator $\sigma(I-\sigma S_{\Gamma})^{-1}$ and the compact operator  $S_{\Gamma}$. The latter is true due to compact embedding of  $H^{\demi}(\Gamma)$ into $\tilde H^{-\demi}(\Gamma)$. Therefore we can write $\Re(A):=A_0+C$ with {$A_0:=\frac{\Re(\sigma)}{|\sigma|^2}I$} coercive and {$C:=\Re(\frac{\sigma^2}{|\sigma|^2 }S_\Gamma(I-\sigma S_{\Gamma})^{-1})$} compact.

	Finally the last part is obvious since $A$ is invertible.
\end{proof}

\Cref{propop} along {Proposition} \ref{cor3} allows us to apply \cite[Theorem 2.31]{CCH16}  or \cite[Theorem 2.15]{KG2008} to state the so-called $F_\#$ factorization statement. To this end let us define
$$A_\#:=|\Re(A)|-\Im(A).$$
\begin{theorem}\label{thter}
	Let $A:=(\overline{\sigma}I-|\sigma|^2S_\Gamma)^{-1}: \tilde H^{-\demi}(\Gamma)\to \tilde H^{-\demi}(\Gamma)$, and assume that there is no {non-trivial} Herglotz wave function $v_g$ such that $v_g|_{\Gamma}=0$ and $\Re(\sigma)\geq \sigma_0>0$  on $\Gamma$. Then
	$$F_\#=(\sigma H)^*A_\# (\sigma H),$$
	where $A_\#:\tilde H^{-\demi}(\Gamma)\to \tilde H^{-\demi}(\Gamma)$ is self-adjoint and satisfies the coercivity property
	$$ \mbox{$\left(\varphi, A_\#\varphi\right)\geq c\|\varphi\|^2_{H^{-\demi}(\Gamma)}$ \;\;\; \; for all \;\; $\varphi \in\overline{\mbox{\em R}(H)}=\tilde H^{-\demi}(\Gamma)$}.$$  
	Moreover, $\mbox{\em R}(H^*)=\mbox{\em R}(F_\#^\demi)$.
\end{theorem}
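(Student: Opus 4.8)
The proof I would give is an application of the abstract range identity (Kirsch factorization) theorem, \cite[Theorem 2.15]{KG2008} (equivalently \cite[Theorem 2.31]{CCH16}), to the symmetric factorization already recorded in \eqref{fac0}. Writing the adjoint in the $H^{-\demi}(\Gamma)$-inner product, \eqref{fac0} takes the form $F=(\sigma H)^* A (\sigma H)$, so the outer operator is $\sigma H:L^2(\mathbb{S})\to\tilde H^{-\demi}(\Gamma)$ and the middle operator is $A$. The abstract theorem asserts that, once its hypotheses are checked, $F_\#$ inherits the factorization $F_\#=(\sigma H)^* A_\# (\sigma H)$ with $A_\#=|\Re(A)|-\Im(A)$ self-adjoint and coercive on $\overline{\mbox{R}(\sigma H)}$, and that $\mbox{R}\big((\sigma H)^*\big)=\mbox{R}(F_\#^\demi)$. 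Thus the entire proof reduces to verifying the hypotheses and then translating the resulting range identity back to $H^*$.

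First I would deal with the outer operator. By \Cref{cor3}, $H$ is injective with dense range and $\overline{\mbox{R}(H)}=\tilde H^{-\demi}(\Gamma)$. The hypothesis $\Re(\sigma)\geq\sigma_0>0$ on $\Gamma$ gives $|\sigma|\geq\sigma_0$, so $\sigma$ and $\sigma^{-1}$ both lie in $L^\infty(\Gamma)$; consequently multiplication by $\sigma$ is a bounded bijection of $\tilde H^{-\demi}(\Gamma)$ with bounded inverse (multiplication by $\sigma^{-1}$). It follows that $\sigma H$ again has dense range, $\overline{\mbox{R}(\sigma H)}=\tilde H^{-\demi}(\Gamma)=\overline{\mbox{R}(H)}$, and, since the adjoint of a bijection is a bijection, $\mbox{R}\big((\sigma H)^*\big)=\mbox{R}(H^*)$. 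This last identity is precisely what will let me state the final conclusion in terms of $H^*$.

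The core of the verification concerns the middle operator $A$, and here \Cref{propop} supplies exactly the three conditions required by the abstract theorem: the one-sided sign condition $\Im(\varphi,A\varphi)_{H^{-\demi}(\Gamma)}\leq 0$ is part (i); the decomposition $\Re(A)=A_0+C$ with $A_0$ coercive and $C$ compact is part (ii); and the injectivity of $A$ on $\overline{\mbox{R}(\sigma H)}=\tilde H^{-\demi}(\Gamma)$ is part (iii). With these three properties and the dense range of $\sigma H$ established above, the hypotheses of \cite[Theorem 2.15]{KG2008} are met, and I would invoke it directly to obtain the factorization of $F_\#$, the self-adjointness and coercivity of $A_\#$, and $\mbox{R}\big((\sigma H)^*\big)=\mbox{R}(F_\#^\demi)$. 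Combining with $\mbox{R}\big((\sigma H)^*\big)=\mbox{R}(H^*)$ from the previous step yields the asserted $\mbox{R}(H^*)=\mbox{R}(F_\#^\demi)$.

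The step I expect to be most delicate is the passage, internal to the abstract theorem, from the non-strict sign condition (i) together with injectivity (iii) to the coercivity of $A_\#$: coercivity of $A_\#=|\Re(A)|-\Im(A)$ is not a consequence of the coercivity of $\Re(A)$ modulo compact perturbation by itself, but follows by combining that bound with the non-positivity of $\Im(A)$ and upgrading through injectivity. This is exactly the situation the cited versions of the range identity are built to handle, as opposed to the simpler case $\Im(\varphi,A\varphi)<0$ for all $\varphi\neq 0$, so I would be careful to invoke the form of the theorem that permits $\Im(A)$ to degenerate. A secondary point requiring attention is the bookkeeping of inner products: all adjoints and the notions of $\Re$, $\Im$ and coercivity must be referred consistently to the $H^{-\demi}(\Gamma)$-pairing used in \Cref{propop}, and one should confirm that multiplication by $\sigma$ genuinely acts boundedly on $\tilde H^{-\demi}(\Gamma)$ given only $\sigma\in L^\infty(\Gamma)$, which is implicit in the well-posedness of $A$ on that space.
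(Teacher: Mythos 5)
Your proposal is correct and follows essentially the same route as the paper: the paper gives no separate proof of \Cref{thter}, stating only that \Cref{propop} together with \Cref{cor3} "allows us to apply \cite[Theorem 2.31]{CCH16} or \cite[Theorem 2.15]{KG2008}," which is precisely the verification you carry out. Your additional details --- that multiplication by $\sigma$ is a bounded bijection of $\tilde H^{-\demi}(\Gamma)$ because $\Re(\sigma)\geq\sigma_0>0$, hence $\overline{\mbox{R}(\sigma H)}=\overline{\mbox{R}(H)}$ and $\mbox{R}\bigl((\sigma H)^*\bigr)=\mbox{R}(H^*)$ --- are correct fillings of steps the paper leaves implicit.
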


Now we are ready to formulate a second {criterion} to determine the mixed modified Steklov eigenvalues of (\ref{Steklov_eig}). 
\begin{lemma}\label{lemp}
	{Assume that $\Gamma$ is such that  there is no {non-trivial} Herglotz wave function $v_g$ such that $v_g|{\Gamma}=0$}. Under the assumptions of \Cref{thter}, the  functional ${\mathcal B}: g\mapsto (F_\#g,g)_{L^2(\mathbb{S})}=\|F_\#^\demi g\|_{L^2(\mathbb{S})}$ satisfies the property that for any given sequence $\{g_n\}\in L^2({\mathbb S})$ the sequence $\{{\mathcal B}(g_n)\}$ is bounded if and only if the sequence $\{\left\|{\mathcal H}g_n\right\|\}$ is bounded as $n\to \infty$. 
\end{lemma}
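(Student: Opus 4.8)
The plan is to read the equivalence off the factorization $F_\#=(\sigma H)^*A_\#(\sigma H)$ of \Cref{thter} and then to transfer the resulting control of $\sigma Hg$ to control of $\mathcal{H}g$ through the Herglotz incident field $v_g$ that both operators share. First I would use \Cref{thter} directly: writing $\mathcal{B}(g)=(F_\# g,g)_{L^2(\mathbb{S})}=(A_\#(\sigma Hg),\sigma Hg)_{H^{-\demi}(\Gamma)}$ and invoking that $A_\#$ is bounded and self-adjoint together with the coercivity $(\varphi,A_\#\varphi)\geq c\|\varphi\|^2_{H^{-\demi}(\Gamma)}$ on $\overline{R(H)}=\tilde H^{-\demi}(\Gamma)$, I obtain the two-sided bound
$$c\,\|\sigma Hg\|^2_{H^{-\demi}(\Gamma)}\leq \mathcal{B}(g)\leq C\,\|\sigma Hg\|^2_{\tilde H^{-\demi}(\Gamma)}.$$
Since $\Re(\sigma)\geq\sigma_0>0$ on $\Gamma$, multiplication by $\sigma$ is bounded above and below on $L^2(\Gamma)$, and since $Hg=v_g|_\Gamma\in H^{\demi}(\Gamma)\subset L^2(\Gamma)\subset\tilde H^{-\demi}(\Gamma)$, boundedness of $\{\mathcal{B}(g_n)\}$ is equivalent to boundedness of the screen traces $\{v_{g_n}|_\Gamma\}$ in the relevant norm. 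This reduces the lemma to comparing the trace of $v_{g_n}$ on $\Gamma$ with the size of $\mathcal{H}g_n$.

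Next I would re-express $\|\mathcal{H}g\|$ in terms of $v_g$. By construction $\mathcal{H}g=(w^{(\lambda)}_g|_{\partial D},\partial_{\bfnu}w^{(\lambda)}_g|_{\partial D})$, and the boundary conditions of \eqref{auxiliary_problem} force the Neumann trace to be a bounded function of the Dirichlet trace ($\partial_{\bfnu}w^{(\lambda)}_g=-\lambda w^{(\lambda)}_g$ on $\Gamma$ and $0$ on $\partial D\setminus\Gamma$), so $\|\mathcal{H}g\|$ is equivalent to $\|w^{(\lambda)}_g|_{\partial D}\|_{H^{\demi}(\partial D)}$. Using well-posedness of the exterior problem \eqref{auxiliary_problem} together with trace estimates (the difference of the interior and exterior Dirichlet-to-Neumann maps on $\partial D$ being elliptic, hence invertible), one recovers the full Cauchy data of $v_g$ on $\partial D$ boundedly from $\mathcal{H}g$ and conversely; thus $\{\|\mathcal{H}g_n\|\}$ is bounded if and only if $\{v_{g_n}\}$ is bounded in $H^1(D)$. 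With this, one implication is immediate: if $\{\|\mathcal{H}g_n\|\}$ is bounded then $\{v_{g_n}\}$ is bounded in $H^1(D)$, hence so is $\{v_{g_n}|_\Gamma\}$ by the trace theorem, and therefore $\{\mathcal{B}(g_n)\}$ is bounded.

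The converse is the crux and the step I expect to be the main obstacle: I must show that boundedness of the screen trace $\{v_{g_n}|_\Gamma\}$ forces boundedness of $\{v_{g_n}\}$ in $H^1(D)$, i.e.\ that the partial trace on $\Gamma\subsetneq\partial D$ controls the whole incident field. This is precisely where the geometric injectivity hypothesis — no nontrivial Herglotz wave function vanishing on $\Gamma$ — has to be used, and I would argue by contradiction and compactness: normalizing $\hat v_n:=v_{g_n}/\|v_{g_n}\|_{H^1(D)}$ produces unit-norm fields whose $\Gamma$-traces tend to zero, and after extracting a weakly convergent subsequence the compact embedding of the trace $H^1(D)\hookrightarrow L^2(\Gamma)$ yields a limiting Helmholtz solution with vanishing trace on $\Gamma$, which the injectivity assumption is meant to exclude. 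The genuine difficulty, and the part where I expect the real work to lie, is that the weak $H^1(D)$ limit is a priori only a Helmholtz solution and need not be a Herglotz wave function, so the geometric hypothesis does not obviously apply to it; making the argument close therefore requires running the normalization at the level of the densities $g_n\in L^2(\mathbb{S})$ and exploiting compactness and injectivity of the Herglotz map $g\mapsto v_g$ so that the limiting field is genuinely of Herglotz type — and it is exactly this passage from a partial trace to the full auxiliary Cauchy data, special to the screen geometry $\Gamma\subsetneq\partial D$, that I anticipate to be delicate.
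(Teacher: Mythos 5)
Your reduction steps coincide with the paper's: the two-sided estimate $c\|\sigma Hg\|^2_{H^{-\demi}(\Gamma)}\le \mathcal{B}(g)\le C\|\sigma Hg\|^2_{H^{-\demi}(\Gamma)}$ read off from the factorization of \Cref{thter}, and the equivalence between boundedness of $\{\mathcal{H}g_n\}$ and boundedness of the incident fields $v_{g_n}$ (the paper obtains this from the Green representation of the scattered field in terms of the total-field Cauchy data together with well-posedness of the auxiliary problem; your difference-of-DtN-maps formulation is the same mechanism in different language). Consequently the implication ``$\{\|\mathcal{H}g_n\|\}$ bounded $\Rightarrow$ $\{\mathcal{B}(g_n)\}$ bounded'' is complete in your write-up, exactly as in the paper.

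The genuine gap is the converse, which you yourself flag and leave open. One must show that boundedness of $\|Hg_n\|_{H^{-\demi}(\Gamma)}=\|v_{g_n}|_\Gamma\|_{H^{-\demi}(\Gamma)}$ forces boundedness of the fields $v_{g_n}$, and your first normalization (by $\|v_{g_n}\|_{H^1(D)}$) produces a weak limit that is merely a Helmholtz solution in $D$, to which the geometric hypothesis does not apply---as you correctly observe. The paper closes this step precisely the way you anticipate but do not execute: if $\|v_{g_n}\|_{H^1(B_R)}$ were unbounded, then $\|g_n\|_{L^2(\mathbb{S})}\to\infty$ along a subsequence (since the Herglotz map is bounded); the normalized densities $g_n/\|g_n\|_{L^2(\mathbb{S})}$ admit a weak limit $\varphi$ in $L^2(\mathbb{S})$; testing weak convergence against the kernel $e^{ik\bfx\cdot\bfd}$ gives $v_{g_n}/\|g_n\|_{L^2(\mathbb{S})}\to v_\varphi$ pointwise on $B_R$, so the limit field is a genuine Herglotz wave function; and since $\|v_{g_n}|_\Gamma\|_{H^{-\demi}(\Gamma)}/\|g_n\|_{L^2(\mathbb{S})}\to 0$, the trace of $v_\varphi$ on $\Gamma$ vanishes, contradicting the injectivity assumption on $H$. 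Two remarks: first, without carrying out this argument your proposal establishes only one implication of the lemma, so it is incomplete as a proof; second, your instinct that this passage is delicate is sound---even the paper's own argument is terse here, since the contradiction with the geometric assumption additionally requires that the weak limit $\varphi$ be nontrivial (normalized sequences can converge weakly to zero), a point the paper does not address. But as written, your proposal stops exactly where the paper's proof does its real work.
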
 
\begin{proof}
	Assume that ${\mathcal B}g_n$ is bounded. Then  using  the above factorization we have $${\mathcal B}g_n=(F_\#g_n,g_n)=(A_\# {\sigma}Hg_n, {\sigma  Hg_n)\geq  c\|{\sigma}Hg_n\|_{H^{-\demi}(\Gamma)}^2}$$ {where $c$ is the constant appearing in Theorem~\ref{thter}.}
	Thus $\| Hg_n\|_{H^{-\demi}(\Gamma)}$ is bounded.  
	
	{Noting that $Hg_n=v_{g_n}|_{\Gamma}$, we now show that this  implies  $\|v_{g_n}\|_{H^1(B_R)}$  is also bounded.  Assume to the contrary that $\|v_{g_n}\|_{H^1(B_R)}$ is unbounded, which means that $\|g_n\|_{L^2({\mathbb S})}$ is unbounded. Hence, by extracting a subsequence, we can assume that  $\|g_n\|_{L^2({\mathbb S})} \to \infty$ as $n\to \infty$. Now consider $g_n/\|g_n\|_{L^2({\mathbb S})}$. Again after extracting a subsequence,  this sequence converges to some function $\varphi$ weakly in $L^2({\mathbb S})$. Hence}  
	{$$\frac{1}{\| g_n\|_{L^2(\mathbb{S})}}\int_{{\mathbb S}} g_n(\hat y)e^{ikx\cdot \hat y}\, d \hat y \to  \int_{{\mathbb S}}\varphi(\hat y) e^{ikx\cdot \hat y}\, d \hat y:=v_\varphi\qquad \mbox{as } n\to \infty$$
		point-wise on $B_R$. 
		But due to the boundedness of $\|v_{g_n}\|_{H^{-\demi}(\Gamma)}$, 
		$$\frac{1}{\| g_n\|_{L^2(\mathbb{S})}}\|v_{g_n}\|_{H^{-\demi}(\Gamma)}\to  0 \qquad \mbox{as } n\to \infty,$$
		whence $v_\varphi|_{\Gamma}=0$ which contradicts the assumption of the lemma. Hence
		$\|v_{g_n}\|_{H^1(B_R)}$ is bounded as $n\to\infty$.}
	
	Now from  the well-posedness of (\ref{auxiliary_problem2}) we conclude that $\left\{{\mathcal H}g_n\right\}$ is bounded in $H^{1/2}(\partial D)\times H^{-1/2}(\partial D)$ where ${\mathcal H}g$  is given by (\ref{bigH}).

	Conversely assume that $\left\{{\mathcal H}g_n\right\}$ is bounded in $H^{1/2}(\partial D)\times H^{-1/2}(\partial D)$. Then from the Green's representation theorem we have that the scattered field  $w^{(\lambda),s}_{g_n}$ is bounded in $H^1(B_R\setminus D)$ and hence  the incident field $v_{g_n}$ is also bounded in $H^1(B_R)$, hence  by the trace theorem $\|Hg_n\|_{H^{-\demi}(\Gamma)}$ is bounded. Now 
	$${\mathcal B}g_n=(F_\#g_n,g_n)=(A_\# {\sigma}Hg_n, {\sigma} Hg_n)\leq C_{max}\|\sigma Hg_n\|_{H^{-\demi}(\Gamma)}^{2}$$
	{where $C_{max}<\infty$ depends on the bound on $A_{\#}$.}
	Thus ${\mathcal B}(g_n)$ is also bounded which completes the proof. 
\end{proof}

\Cref{lemp} allows us to apply Preposition \ref{GLSM} with ${\mathcal B}: =(F_\#g,g)_{L^2(\mathbb{S})}$ to obtain the following result on the determination of the eigenvalues of (\ref{Steklov_eig}).
\begin{theorem}\label{deterlambda2}
	Assume that  the modified far field operator ${\cF}: L^2(\mathbb{S})\to L^2(\mathbb{S})$ has dense range,  $\Re(\sigma)\geq \sigma_0>0$ on $\Gamma$, and that there is no {non-trivial} Herglotz wave function $v_g$ such that $v_g|_{\Gamma}=0$.  Consider
	$$J_\alpha(\Phi_\infty(\cdot, \bfz),g):=\alpha (F_\# g,g)+\left\|{\cF}g-\Phi_\infty(\cdot, \bfz)\right\|^2,$$
	and the corresponding  $j_\alpha(\Phi_\infty(\cdot, \bfz))$ given by (\ref{func1}). Let $g_\alpha^z$ be a minimizing sequence such that 
	$$J_\alpha(\Phi_\infty(\cdot, \bfz),g_\alpha)\leq  j_\alpha(\Phi_\infty(\cdot, \bfz)) +C\alpha,$$
	with $C>0$ a constant independent of $\alpha>0$. Then $\lambda\in {\mathbb C}$ is a mixed modified Steklov eigenvalue of (\ref{Steklov_eig}) if and only if 
	the set of points $\bfz$ such that  $(F_\# g_\alpha^z,g_\alpha^z)<\infty$ as $\alpha \to 0$  is nowhere dense in $D$.
\end{theorem}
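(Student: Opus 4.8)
The plan is to assemble this result from the machinery already in place, since all of the genuinely technical work has been carried out in \Cref{lemp} and in \Cref{g1,g2}. The theorem is nothing more than the instantiation of the abstract generalized linear sampling framework of \Cref{GLSM} with the specific functional $\mathcal{B}(g):=(F_\# g,g)_{L^2(\mathbb{S})}$, combined with the characterization of the range of $\mathcal{G}$ in terms of the eigenvalues of (\ref{Steklov_eig}).

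First I would verify that the hypotheses of \Cref{GLSM} hold for this choice of $\mathcal{B}$. The dense range of $\cF$ is assumed outright. The remaining hypothesis---that for any sequence $\{g_n\}\subset L^2(\mathbb{S})$ the sequence $\{\mathcal{B}(g_n)\}$ is bounded if and only if $\{\|\mathcal{H}g_n\|\}$ is bounded---is exactly the content of \Cref{lemp}, whose proof rested on the $F_\#$ factorization of \Cref{thter} (coercivity and boundedness of $A_\#$ on $\overline{\mbox{R}(H)}$) together with the geometric assumption that no non-trivial Herglotz wave function vanishes on $\Gamma$. I would also record that $\mathcal{B}$ is a well-defined continuous functional into $\mathbb{R}^+$: since $A_\#$ is self-adjoint and positive and $F_\#=(\sigma H)^*A_\#(\sigma H)$, the quantity $(F_\# g,g)=\|F_\#^{\demi}g\|_{L^2(\mathbb{S})}^2\geq 0$ is real and nonnegative, which is why no absolute value is needed (in contrast to \Cref{deterlambda1}).

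With the hypotheses in force, \Cref{GLSM} yields, for each fixed sampling point $\bfz\in D$, the equivalence $\Phi_\infty(\cdot,\bfz)\in \mbox{R}(\mathcal{G})$ if and only if the minimizing sequence $g_\alpha^z$ satisfies $\mathcal{B}(g_\alpha^z)=(F_\# g_\alpha^z,g_\alpha^z)$ bounded as $\alpha\to 0$. Consequently the set of points $\bfz$ for which $(F_\# g_\alpha^z,g_\alpha^z)<\infty$ as $\alpha\to 0$ coincides precisely with $T:=\{\bfz\in D:\Phi_\infty(\cdot,\bfz)\in \mbox{R}(\mathcal{G})\}$.

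Finally I would invoke the dichotomy of \Cref{g1,g2}. If $\lambda$ is not an eigenvalue of (\ref{Steklov_eig}), then \Cref{g1} gives $\Phi_\infty(\cdot,\bfz)\in \mbox{R}(\mathcal{G})$ for every $\bfz\in D$, so $T=D$, which is not nowhere dense; conversely, if $\lambda$ is an eigenvalue, then \Cref{g2} shows $T$ is nowhere dense in $D$. Combining these two implications with the identification of $T$ from the previous step gives exactly the stated equivalence. I do not expect a serious obstacle in the theorem itself, as it is essentially a bookkeeping combination; the only point demanding care is to confirm that the coercivity and boundedness constants in \Cref{lemp} are independent of $\bfz$---which they are, since $A_\#$ and the factorization $F_\#=(\sigma H)^*A_\#(\sigma H)$ do not depend on the sampling point---so that the range-membership criterion may legitimately be applied pointwise across a dense subset of a ball $B\subset D$.
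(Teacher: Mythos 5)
Your proposal is correct and takes essentially the same route as the paper: the paper proves \Cref{deterlambda2} precisely by noting that \Cref{lemp} verifies the boundedness hypothesis of \Cref{GLSM} for $\mathcal{B}(g):=(F_\# g,g)_{L^2(\mathbb{S})}$, and then combining that proposition with the dichotomy of \Cref{g1,g2} characterizing when $\Phi_\infty(\cdot,\bfz)\in \mbox{R}(\mathcal{G})$. Your added remarks---that $(F_\# g,g)$ is real and nonnegative so no absolute value is needed, and that the coercivity constants are independent of the sampling point $\bfz$---are correct refinements of points the paper leaves implicit.
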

In contrast to  \Cref{deterlambda1}, the above theorem provides a rigorous {criterion} for determining the eigenvalues under a geometric assumption on $\Gamma$ without restricting the value of the parameter $\lambda$. We also remark that it is possible to obtain the regularized version of the statement in \Cref{deterlambda1,deterlambda2} for noisy modified far field operator ${\mathcal F}$. We refer the reader to  \cite{ACH2017} and \cite{CCH16} for details on this matter. 

\section{Relations between eigenvalues and the surface parameter}\label{sec:resp}
We now turn our attention to the eigenvalue problem (\ref{Steklov_eig}), in particular understanding how its eigenvalues relate to the unknown coefficient  $\sigma$. This eigenvalue problem is a modification of the so called sloshing eigenvalue problem. Let us fix  a constant $\alpha\geq 0$ such that $k^2$ is not a mixed Robin-Neumann eigenvalue of 
\begin{eqnarray*}
	\Delta h+k^2 h&=&0 \quad \mbox{in } D,  \qquad \partial_{\bfnu} h +(\alpha-\sigma(x))h=0 \quad \mbox{on }\Gamma, \\
	\partial_{\bfnu} h&=&0 \quad \mbox{on } \partial  D\setminus{\Gamma}.
\end{eqnarray*}
{Such an $\alpha$ can always be found because  a given $k^2$ cannot be a mixed Robin-Neumann eigenvalue for all $\alpha\geq 0$. Indeed, the lefthand side  of the following variational form of  the above problem
	$$\int_D \left(\nabla h\cdot \nabla \overline\varphi -k^2h\overline\varphi\right) \, dx+\int_\Gamma(\alpha-\sigma)h\overline\varphi\, ds=0\qquad \mbox{for all} \;\; \varphi \in H^1(D).$$
	determines by means of the Riesz representation theorem an $\alpha$ parametric family of operators in $H^1(D)$ that is invertible plus compact and depends analytically on $\alpha \in {\mathbb C}$. Hence the Analytic Fredholm Theory  \cite{CK2019} can be applied. Since  $k$ and $D$ satisfy Assumption \ref{assum}, the kernel of this operator for $\alpha\in {\mathbb C}$ with $\Im(\alpha)<0$ is trivial. Hence the set of $\alpha$ for which the kernel is nontrivial is at most discrete.}

Hence we can define the operator ${\mathcal R}:L^2(\Gamma)\to L^2(\Gamma)$ mapping
$${\mathcal R}:\theta\mapsto h_{\theta}|_\Gamma$$
where $h_\theta\in H^1(D)$ is the unique solution of 
$$\int_D {\left(\nabla h_\theta\cdot \nabla \varphi -k^2h_{\theta} \varphi\right)} \, dx+\int_\Gamma(\alpha-\sigma)h_\theta\varphi\, ds=\int_{\Gamma}\theta\varphi \, ds \qquad \mbox{for all} \;\; \varphi \in H^1(D).$$
Obviously, this operator is compact, since $h_{\theta}|_\Gamma\in H^{\demi}(\Gamma)$ which is compactly embedded in $L^2(\Gamma)$. Then $\lambda$ is an eigenvalue of (\ref{Steklov_eig}) if and only if 
$$(-\lambda +\alpha){\mathcal R}\theta =\theta.$$
In other words $\frac{1}{-\lambda+\alpha}$ are the eigenvalues of the compact operator ${\mathcal R}$ and hence form {at most a countable discrete set (note that the eigenvalues $\lambda\neq \alpha$ from the assumption on $k^2$).} 

{When the coefficient  $\sigma(x)$ is real valued, the compact operator ${\mathcal R}$ is in addition self-adjoint. Thus, it has an infinite sequence of real eigenvalues   $\frac{1}{-\lambda_j+\alpha}$ , $j=1,\cdots$ which accumulate to zero. 
	Hence we conclude that  in this case  all eigenvalues of (\ref{Steklov_eig}) are real, and there exists a sequence $\left\{\lambda_j\right\}_{j\geq 1}$ of eigenvalues which accumulate to $\pm\infty$ (because the operator ${\mathcal R}$ is not sign definite  the eigenvalues can in principal accumulate to both $\pm \infty$). 
	However we show next, that they accumulate  only to $-\infty$ as $j\to \infty$ since there are only finitely many positive eigenvalues}.  
In addition the corresponding eigenfunctions $h_j$ form a Riesz basis for $H^1(D)$.  If $\Im(\sigma)>0$ there are no real eigenvalues,  and since ${\mathcal R}$ is not selfadjoint the existence of non-real complex eigenvalues requires a special investigation, which is beyond the scope of this paper.
\begin{theorem}
	{Suppose the fixed wave number $k$  and domain $D$ satisfy  Assumption \ref{assum}. For  real valued positive $\sigma>0$, there exists at least one positive eigenvalue $\lambda>0$ of (\ref{Steklov_eig}). Furthermore, there are at most finitely many positive eigenvalues of (\ref{Steklov_eig}), hence  they accumulate only at $-\infty$.}
\end{theorem}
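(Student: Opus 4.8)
The plan is to convert the non-standard, indefinite eigenvalue problem (\ref{Steklov_eig}) into a monotone family of genuinely self-adjoint, semibounded problems and then read off the distribution of the eigenvalues $\lambda$ from a level-crossing argument. For each fixed real $\lambda$ I would consider the mixed Robin--Neumann problem for $-\Delta$ on $D$, namely $-\Delta u=\tau u$ in $D$, $\partial_{\bfnu}u=(\sigma-\lambda)u$ on $\Gamma$, $\partial_{\bfnu}u=0$ on $\partial D\setminus\Gamma$, whose weak form carries the coercive-modulo-compact bilinear form $b_\lambda(u,u)=\int_D|\nabla u|^2-\int_\Gamma(\sigma-\lambda)|u|^2$. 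This has discrete spectrum $\tau_1(\lambda)\le\tau_2(\lambda)\le\cdots\to+\infty$, and $\lambda$ is an eigenvalue of (\ref{Steklov_eig}) precisely when $\tau_n(\lambda)=k^2$ for some $n$. The structural facts I would establish first are: (i) each $\tau_n(\lambda)$ is continuous and strictly increasing in $\lambda$, since $b_\lambda$ increases pointwise in $\lambda$ with $\lambda$-derivative $\int_\Gamma|u|^2$, which is strictly positive on eigenfunctions (if $u|_\Gamma\equiv0$ then $\partial_{\bfnu}u=(\sigma-\lambda)u|_\Gamma=0$ on $\Gamma$, forcing $u\equiv0$ by unique continuation); and (ii) $\tau_n(\lambda)\to\tau_n^{D}$, the $n$-th mixed Dirichlet($\Gamma$)--Neumann eigenvalue, as $\lambda\to+\infty$.

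With this in hand, finiteness and the accumulation claim follow cleanly and without any further input. A positive eigenvalue is a value $\lambda_n^*>0$ with $\tau_n(\lambda_n^*)=k^2$; strict monotonicity gives $\tau_n(0)<\tau_n(\lambda_n^*)=k^2$, so the index $n$ of any positive eigenvalue lies in the finite set $\{n:\tau_n(0)<k^2\}$, finite because $\tau_n(0)\to+\infty$. Hence there are at most finitely many positive eigenvalues. For the accumulation, for every fixed $\lambda$ one has $\tau_n(\lambda)\to+\infty$ as $n\to\infty$, so solving $\tau_n(\lambda_n^*)=k^2$ with $k^2$ fixed forces $\lambda_n^*\to-\infty$; together with discreteness this shows the eigenvalues accumulate only at $-\infty$, and Assumption \ref{assum} ($k^2\neq\tau_n^{D}$) guarantees no branch stalls at a finite value while meeting the level $k^2$.

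For existence I would count the crossings of the level $k^2$ by Morse indices. Writing $\mathcal N(\lambda)=\#\{n:\tau_n(\lambda)<k^2\}$, the number of positive eigenvalues of (\ref{Steklov_eig}) equals the number of upward crossings in $(0,\infty)$, i.e. $\mathcal N(0^+)-\mathcal N(+\infty)=\mathcal N(0^+)-N_0$ with $N_0:=\#\{n:\tau_n^{D}<k^2\}$ (and $\mathcal N(0^+)=\mathcal N(0)$ unless $k^2$ happens to equal some $\tau_n(0)$, a case excluded by the freedom in choosing $\partial D$). It therefore suffices to show $\mathcal N(0)\ge N_0+1$. Here the hypothesis $\sigma>0$ enters: because $b_0(u,u)=\int_D|\nabla u|^2-\int_\Gamma\sigma|u|^2$ is pointwise dominated by the pure Neumann Helmholtz form $\int_D|\nabla u|^2$, the min--max principle gives $\mathcal N(0)\ge M_0:=\#\{\text{Neumann eigenvalues}<k^2\}$; and since the constant function is a Neumann mode with eigenvalue $0<k^2$ but is annihilated by the Dirichlet condition on $\Gamma$, the Neumann spectrum sits one step below the mixed Dirichlet--Neumann spectrum, yielding $M_0\ge N_0+1$ and hence at least one positive eigenvalue.

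The hard part will be this last inequality $M_0\ge N_0+1$, that is, the strict interlacing $\tau^{\mathrm N}_{n+1}<\tau^{D}_n$ of the pure Neumann and the mixed Dirichlet($\Gamma$)--Neumann spectra below $k^2$. When $k^2<\tau_1^{D}$ (so $N_0=0$) it is immediate, since $M_0\ge1$ from the zero Neumann mode; but for general $k$ it is a Friedlander--Filonov-type eigenvalue inequality that requires the Filonov trial-function device, augmenting the $n$ lowest Dirichlet eigenfunctions by one Helmholtz solution $e^{ik\,\bfd\cdot\bfx}\in H^1(D)$, adapted to the Neumann condition imposed on $\partial D\setminus\Gamma$. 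Making this interlacing rigorous in the present mixed setting, or invoking the appropriate extension of Filonov's theorem, is the single non-routine ingredient; the monotonicity, the limits in (i)--(ii), and the crossing count are standard consequences of the min--max characterization together with Assumption \ref{assum}.
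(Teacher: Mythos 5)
Your treatment of the second half of the statement (finitely many positive eigenvalues, accumulation only at $-\infty$) is correct and genuinely different from the paper's. You get it from monotonicity of the Robin--Neumann branches $\tau_n(\lambda)$ in $\lambda$ plus $\tau_n(\lambda)\to\infty$ as $n\to\infty$; the paper instead assumes a sequence of positive eigenvalues tending to $+\infty$, normalizes the eigenfunctions by $\|h_j\|_{H^1(D)}+\|h_j\|_{L^2(\Gamma)}=1$, extracts a weak $H^1(D)$ limit, shows this limit solves the mixed Dirichlet--Neumann problem (\ref{mix}) and hence vanishes by Assumption \ref{assum}, and contradicts the normalization. Modulo the standard care needed for continuity and strict monotonicity of the branches at multiple eigenvalues, your version of this half is sound and arguably more transparent.

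The existence half is where the genuine gap lies, and it cannot be closed by the route you propose. You reduce existence to $M_0\ge N_0+1$, i.e.\ to a shifted interlacing $\tau^{N}_{n+1}\le\tau^{D}_{n}$ between the pure Neumann eigenvalues $\tau^N_n$ of $-\Delta$ on $D$ and the mixed Dirichlet($\Gamma$)--Neumann eigenvalues $\tau^D_n$, and defer this to ``an appropriate extension of Filonov's theorem.'' No such extension holds in this generality: the inequality is false for mixed boundary conditions. Take $D=(0,1)\times(0,\varepsilon)$ with $\Gamma=\{0\}\times(0,\varepsilon)$ and $\varepsilon$ small; the low Neumann eigenvalues are $0,\pi^2,\dots$ while the low mixed ones are $\pi^2/4,\,9\pi^2/4,\dots$, so $\tau^N_2=\pi^2>\pi^2/4=\tau^D_1$, and with $k^2=\pi^2/2$ (Assumption \ref{assum} holds) one gets $M_0=N_0=1$, so your crossing count produces nothing. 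Filonov's trial-function device breaks precisely because the cross terms $\int_{\partial D\setminus\Gamma}u\,\partial_{\bfnu}e^{-ik\bfx\cdot\bfd}\,ds$ no longer vanish: mixed eigenfunctions $u$ are not zero on $\partial D\setminus\Gamma$. Worse, the obstruction is not an artifact of your method: separating variables on this rectangle gives all eigenvalues of (\ref{Steklov_eig}) explicitly as $\lambda=\sigma+k\tan k\approx\sigma-2.92$ (the mode constant in $y$) and $\lambda=\sigma-\kappa_m\tanh\kappa_m$ with $\kappa_m=\sqrt{(m\pi/\varepsilon)^2-k^2}$, $m\ge1$, all of which are negative for $\sigma=1$; so a positive eigenvalue genuinely requires more than the stated hypotheses (for instance $k^2$ below the first eigenvalue of (\ref{mix})). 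For comparison, the paper argues existence by contradiction: if all eigenvalues were $\le0$, then --- invoking the claim that the eigenfunctions form a Riesz basis --- the form $\int_D(|\nabla h|^2-k^2|h|^2)\,dx-\int_\Gamma\sigma|h|^2\,ds$ would be nonnegative on all of $H^1(D)$, which fails for $h\equiv1$ when $\sigma>0$. The example above shows that this completeness step is exactly the delicate point (the eigenfunctions are Helmholtz solutions and cannot control functions like $h\equiv1$ once $k^2>\tau^D_1$), so the hole you left at the interlacing step is not a technicality you could have routinely filled; it marks a real defect in any proof of existence from these hypotheses alone.
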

\begin{proof}
	Assume  to the contrary that all eigenvalues $\lambda_j\leq 0$.  For the eigenfunction $h_j$ corresponding to $\lambda_j$ we have
	$$\int_D {\left(|\nabla h_j|^2 -k^2|h_j|^2\right)} \, dx-\int_\Gamma\sigma|h_j|^2\, ds=-\lambda_j\int_{\Gamma}|h_j|^2 \, ds \geq 0.$$
	Since the eigenfunctions form a Riesz basis we have that for all $h\in H^1(D)$ 
	\begin{equation}\label{ine}
		\int_D {\left(|\nabla h|^2 -k^2|h|^2\right)} \, dx-\int_\Gamma\sigma|h|^2\, ds \geq 0,
	\end{equation}
	which is not possible. Indeed if $\sigma>0$ this is not satisfied for $h=1$.

	Next assume that there exists a sequence of positive eigenvalues $\lambda_j>0$, $j\to +\infty$  converging to $+\infty$ with eigenfunctions $h_j$ normalized such that
	$$\|h_j\|_{H^1(D)}+\|h_j\|_{L^2(\Gamma)}=1.$$
	{Then since the left hand side of the following expression is bounded }
	\begin{equation}\label{ff}
		\int_D {\left(|\nabla h_j|^2 -k^2|h_j|^2\right)} \, dx-\int_\Gamma\sigma|h_j|^2\, ds=-\lambda_j\int_{\Gamma}|h_j|^2 \, ds,
	\end{equation}
	{we see that $h_j\to 0$ in $L^2(\Gamma)$ as $j\to \infty$. Next,  a subsequence $h_j$} converges weakly in $H^1(D)$ to some $h\in H^1(D)$. Since each $h_j$ satisfies  
	$$\int_D {\left(\nabla h_j \cdot \nabla \varphi -k^2 h_j \varphi \right)}\, dx-\int_\Gamma\sigma h_j \varphi \, ds+\lambda_j\int_{\Gamma} h_j \varphi \, ds=0 \qquad \forall \, \varphi \in H^1(D),$$
	we {find} that the weak limit satisfies $\Delta h+k^2h=0$ and hence $\bfnu\cdot \nabla h\in H^{-\demi}(\partial D)$ is well defined. Since $\bfnu\cdot \nabla h_j=0$ on  $\partial D\setminus \Gamma$ we can conclude that {the} weak limit also satisfies  $\bfnu\cdot \nabla h=0$ on  $\partial D\setminus \Gamma$. From the above we also {know that} $h=0$ on $\Gamma$. {Hence, using Assumption \ref{assum}}, we can conclude that the weak limit $h=0$ in $D$. Therefore , {choosing } a subsequence,  we see that $h_j\to 0$ strongly in $L^2(D)$. From (\ref{ff}) we can now conclude that, {again choosing}  a subsequence, {also $\|\nabla h_j\|_{L^2(D)}\to 0$  which contradicts the assumed  normalization and completes the proof.}
\end{proof}
\begin{remark}
	{\em Note that it is possible to prove that an eigenvalue $\lambda(k)$ of (\ref{Steklov_eig}) seen as function of the wave number $k$ blows up as $k\to k_0$ where $k_0^2$ is an eigenvalue of (\ref{mix}).}
\end{remark}

Let $\tau_1:=\tau_1(D, \Gamma, \alpha)>0$  for $0<\alpha<\infty$ be the first mixed Robin-Neumann eigenvalue of 
$$ \Delta u+\tau u=0 \quad \mbox{in} \; D, \qquad \partial_{\bfnu} u+\alpha u = 0 \quad \mbox{on}\; \Gamma, \qquad  \partial_{\bfnu} u =0\quad \mbox{on} \;  \partial D\setminus \Gamma,$$
which, since this is an eigenvalue problem for a positive self-ajoint operator, satisfies
\begin{equation}
	\tau_1=\inf\limits_{u\in H^1(D), u\neq 0}\frac{\|\nabla u\|^2_{L^2(D)}+\alpha\|u\|^2_{L^2(\Gamma)}}{\|u\|^2_{L^2(D)}}.
\end{equation}
Hence for every nonzero $h\in H^1(D)$ we have
$$\frac{1}{\tau_1}\|\nabla h\|^2_{L^2(D)}+\frac{\alpha}{\tau_1}\|h\|^2_{L^2(\Gamma)}\geq \|h\|^2_{L^2(D)}.$$
Again, $\sigma$ is real valued, we can then estimate for some $\Lambda\geq 0$,
\begin{eqnarray*}
	&\int_D {\left(|\nabla h|^2 -k^2|h|^2 \right)}\, dx-\int_\Gamma\sigma|h|^2\, ds +\Lambda \int_\Gamma|h|^2\,ds \\
	&\geq \left(1- \frac{k^2}{\tau_1}\right)\int_{D}|\nabla h|^2\,dx +\int_{\Gamma} \left(\Lambda -\frac{\alpha k^2}{\tau_1}-\sigma\right)|h|^2\,ds \\
	&\geq \left(1- \frac{k^2}{\tau_1}\right)\int_{D}|\nabla h|^2\,dx+\Lambda_*\int_{\Gamma}|h|^2\,ds.
\end{eqnarray*}
We choose $\Lambda\geq 0$  sufficiently large such that $\Lambda_*:=\min_{\Gamma}\left(\Lambda -\alpha k^2/\tau_1-\sigma\right)>0$ and $k^2<\tau_1$.  {For such a choice of $\Lambda$ and $k$}, our eigenvalue problem (\ref{Steklov_eig})  becomes 
$$\int_D|\nabla h|^2 -k^2|h|^2 \, dx-\int_\Gamma\sigma|h|^2\, ds +\Lambda \int_\Gamma|h|^2\,ds=(\Lambda-\lambda)  \int_\Gamma|h|^2\,ds,$$ where now the left-hand-side defines a self-adjoint positive operator. Therefore its eigenvalues $\Lambda_j:=\Lambda-\lambda_j$ satisfy the Courant-Fischer inf-sup principle (see e.g. \cite{lax}).

In particular, from the above assuming that $\sigma>0$ and the wave number $k^2<\tau_1(D, \Gamma, \alpha)$ (which does not depend on the unknown function $\sigma$), we have that the largest positive eigenvalue of  (\ref{Steklov_eig},) whose existence is guaranteed by  (\ref{mix}), satisfies
\begin{equation}
	\lambda_1=\sup_{u\in H^1(D), u\neq 0}\frac{-\int_D|\nabla u|^2 +k^2|u|^2 \, dx+\int_\Gamma\sigma|hu|^2\, ds}{\int_\Gamma|u|^2\, ds}.
\end{equation}
This property states that the largest positive eigenvalue is monotonic increasing with respect to the $L^\infty(\Gamma)$ norm of $\sigma$. It is possible to write similar relations for other eigenvalues based on the Courant-Fischer inf-sup principle  for all the eigenvalues $\Lambda-\lambda_j$. Furthermore the shifting constant $\Lambda$ which depends on $\sigma$ is related to the magnitude of  $\lambda_1$ is, and this can be used to get some information on  $\sigma$. We also noticed that if  $\sigma<0$ in $\Gamma$  such that {$-\sigma>\alpha k^2/\tau_1$} and for $k^2<\tau_1(D, \Gamma, \alpha)$ all the eigenvalues of (\ref{Steklov_eig})  are negative.

The simplest case occurs when $\sigma$ is constant.  In this case, if we let $\lambda_n^{(0)}$ denote the $n$-th eigenvalue of (\ref{Steklov_eig}) with $\sigma=0$ we immediately have
\begin{equation}
	\lambda_n=\sigma+\lambda_n^{(0)}, \;n=1,2,\cdots.
	\label{simple}
\end{equation}
In this case we may determine $\sigma$ by measuring a single $\lambda_n$ and computing $\lambda_n^{(0)}$.

Now if  $\sigma$ is a small perturbation of a constant $\sigma_0$ we could try a perturbation expansion.  Let
\begin{eqnarray*}
	\sigma&=&\sigma_0+\epsilon \sigma_1,\\
	h&=&h_0+\epsilon h_1+\epsilon^2 h_2+\cdots,\\
	\lambda&=&\tilde{\lambda}_0+\epsilon\tilde{\lambda}_1+\epsilon^2\tilde{\lambda}_2+\cdots,
\end{eqnarray*}
where $\epsilon$ is a small parameter.
We are particularly interested in the case where $\sigma_0$ is constant, but at this stage allow $\sigma_0$ and $\sigma_1$ to vary long $\Gamma$.  Plugging in to (\ref{Steklov_eig}) and equating orders of $\epsilon$ we obtain to $O(1)$
\begin{eqnarray*}
	\Delta h_0+k^2h_0&=&0  \mbox{ in }D, \quad  \partial_{\bfnu} h_0 -\sigma_0h_0=-\tilde{\lambda}_0 h_0  \mbox{ on } \Gamma, \\ 
	\partial_{\bfnu} h_0 &=&0  \mbox{ on }\partial D\setminus \Gamma,
\end{eqnarray*}
which is our standard Steklov eigenvalue problem.  To $O(\epsilon)$ we obtain
\begin{eqnarray*}
	\Delta h_1+k^2h_1&=&0 \mbox{ in }D, \quad \partial_{\bfnu} h_1-\sigma_0h_1{+\tilde{\lambda}_0h_1}=\sigma_1h_0-\tilde{\lambda}_1h_0  \mbox{ on } \Gamma, \\
	\partial_{\bfnu} h_1 &=&0 \mbox{ on }\partial D\setminus \Gamma.
\end{eqnarray*}
This is not generally solvable, and the solvability criterion furnishes a way to determine the solution. In particular the $\sigma_1h_0-\tilde{\lambda}_1h_0$ must be orthogonal to the eigenspace of the adjoint of the eigenvalue  problem (\ref{Steklov_eig}). This relation, which involves the eigenfunctions (\ref{Steklov_eig}) of  could be used to compute $\sigma_1$. This perturbation approach can be rigorously justified for real or complex $\sigma$ using perturbation analysis with  perturbation theory with explicit first correction term of Osborn \cite{osborn}.

\section{Numerical experiments}\label{sec:num}
We now show some numerical results that support our claims in the paper, for the sake of simplicity, we restrict our attention to examples in $\mathbb{R}^{2}$ instead of $\mathbb{R}^{3}$. 

We start by investigating the sensitivity of  eigenvalues of (\ref{Steklov_eig}) to the size, position and mixed boundary conditions of the scatter. 
Here $D$ is a half circle or  a quadrant, see \Cref{Domains_Stoklev_eigenvalue_problem},  the wave number $k=2$, the function $\sigma$ depends on the angle $\theta$ and is given by: 
\begin{eqnarray*}
	\sigma(\theta) = \left\{\begin{array}{cl}
		1& \textup{if } 0<\theta<\frac{\pi}{3}+\beta,\\[1em]
		1+\alpha\sin^2(3\beta-3\theta)& \textup{if } \frac{\pi}{3}+\beta<\theta<\frac{2\pi}{3}+\beta,\\[1em]
		1&\textup{if } \frac{2\pi}{3}+\beta<\theta<\pi,
	\end{array}\right.
\end{eqnarray*}
where $\theta$ is the angle of $(x,y)\in \Gamma$, $\alpha\ge 0$ and $\beta\in [0, \pi/3]$ are  parameters.
\begin{figure}[h!]
	\centerline{
		\hbox{\includegraphics[height=1in]{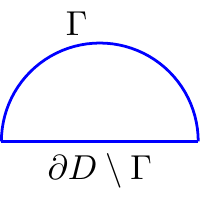}}\hspace{3cm}
		\hbox{\includegraphics[height=1in]{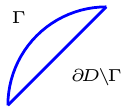}}}
	\caption{The domains of the Steklov eigenvalue problem (\ref{Steklov_eig}).}
	\label{Domains_Stoklev_eigenvalue_problem}
	\centering
\end{figure}

We use a a standard conforming finite element method to find approximations of the largest Steklov eigenvalues.  We  assume that the mesh is chosen so that each component of the boundary  $\Gamma$ and $\partial D\setminus\Gamma $ is exactly covered by a union of edges; see \Cref{mesh} for an example.

\begin{figure}
	\begin{center}
		\begin{tabular}{cc}
			\resizebox{0.47\textwidth}{!}{\includegraphics{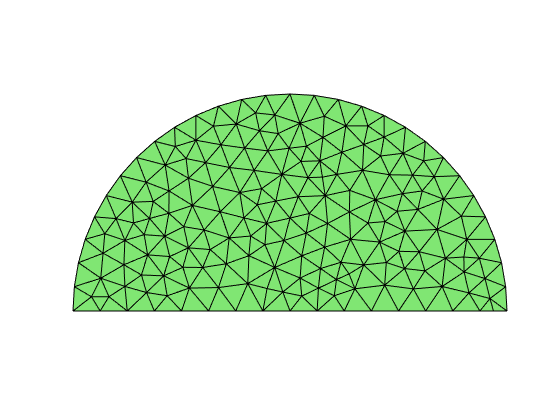}}&
			\resizebox{0.47\textwidth}{!}{\includegraphics{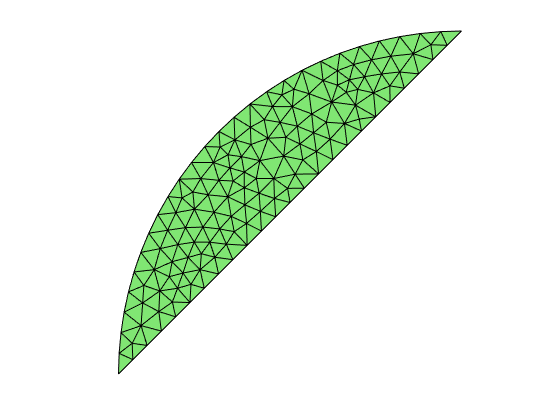}}\\
		\end{tabular}
		\caption{One level  mesh of the domains in \Cref{Domains_Stoklev_eigenvalue_problem}.}
		\label{mesh}
	\end{center}
	
\end{figure}

Let $V_{h}$ denote the space of continuous piecewise quadratic finite elements on a mesh $\mathcal{T}_{h}$ of $\Omega$ consisting of regular triangles of maximum diameter $h>0$.  The computational domain is then $\Omega_{h}=\cup_{K \in \mathcal{T}_{h}} K$. Then we find that the discrete eigenpairs $(\lambda_h, w_h)$ satisfy
$$
\left(\nabla w_h, \nabla v_h\right)-k^{2}\left(w_h, v_h\right)-\langle  \sigma w_h, v_h\rangle_{\Gamma}=-\langle \lambda_h w_h,  v_h\rangle_{\Gamma} \quad \forall v_h \in V_{h}.
$$
We only compute the first six eigenvalues $\lambda_{hj}, j=1, \ldots, 6$ since we can only determine the first few eigenvalues from the far field data. 
We shall investigate how changes in $\sigma$ (due to changes in $\alpha$ and $\beta$) are detected by changes in the measured Steklov eigenvalues.  When reporting changes in eigenvalues we use the relative change  defined by
\begin{equation}
	\frac{\lambda_{hj}\left(\alpha,\beta\right)-\lambda_{hj}(0,0)}{\lambda_{hj}(0,0)}, \quad j=1, \ldots, 6.\label{eig_cha}
\end{equation}
\begin{figure}
	\begin{center}
		\begin{tabular}{cc}
			\resizebox{0.47\textwidth}{!}{\includegraphics{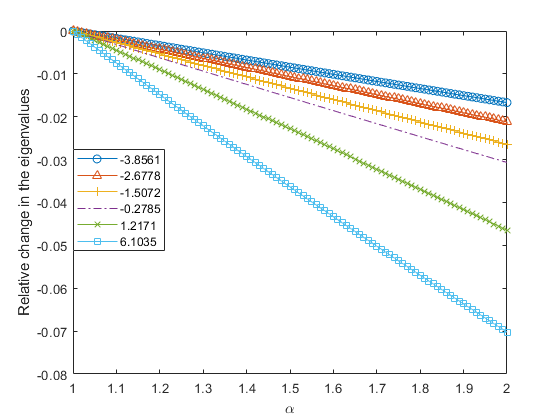}}&
			\resizebox{0.47\textwidth}{!}{\includegraphics{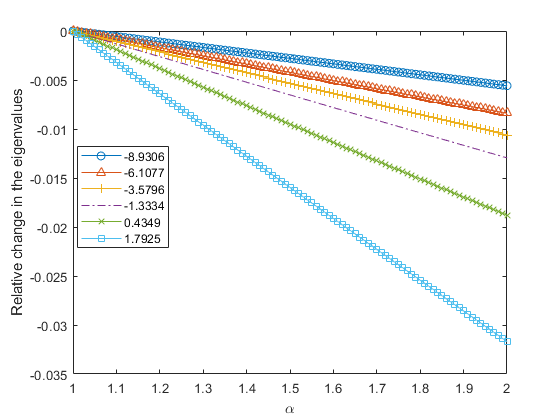}}\\
			\resizebox{0.47\textwidth}{!}{\includegraphics{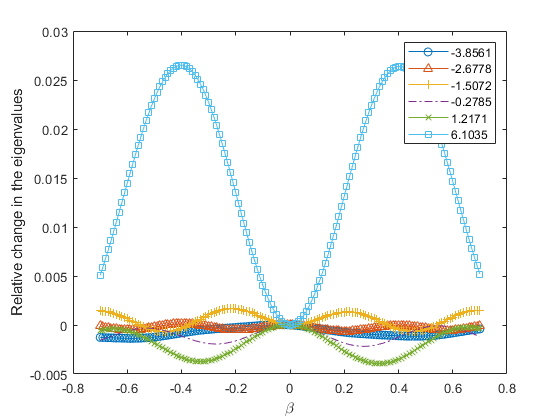}}&
			\resizebox{0.47\textwidth}{!}{\includegraphics{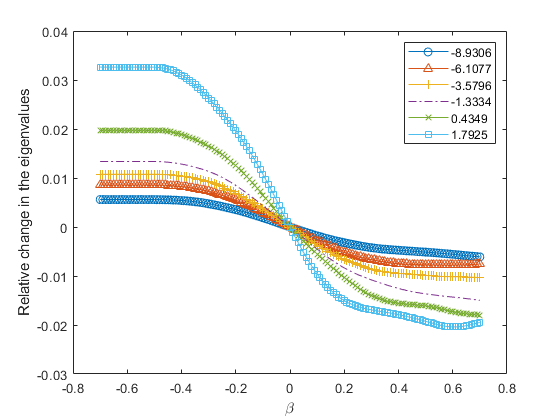}}\\
		\end{tabular}
		\caption{Sensitivity of Steklov eigenvalues to changes in the parameters $\alpha$ and $\beta$,  for two different screens. We plot the relative change in the first six Steklov
			eigenvalues for each  screen. The numbers in
			the legends refer to the computed eigenvalues of the unperturbed screen with $\sigma=1$. The left column is for the half circle and right is for the quadrant. Top is the change of $\alpha$ from $1$ to $2$ with $\beta=0$, bottom is the change of $\beta$ from $-0.7$ to $0.7$ with $\alpha=1$.}
		\label{eigchange}
	\end{center}
	
\end{figure}

\Cref{eigchange} shows plots of the relative change (defined by (\ref{eig_cha}))  in the first six eigenvalues computed by our finite element model as a function of $\alpha$ (top row) and $\beta$ (bottom row). When examining the sensitivity of the eigenvalues to $\alpha$, the parameters are chosen from $1$ to $2$;  when examining the sensitivity to $\beta$, we consider $\beta$ from $-0.7$ to $0.7$.  We examine results for the half circle and quadrant. We see that for the change in both $\alpha$ and $\beta$, the first eigenvalue  shows the most change  regardless of the size of the scatter.

Next, we demonstrate finding Steklov eigenvalues from far field data. We first compute scattering data (approximating $\left.u_{\infty}\right)$, then for several choices of the Steklov parameter $\lambda$ we approximate the corresponding far field pattern $h_{\infty}$ and then solve a discrete analogue of the far field equation using Tikhonov regularization with a fixed Tikhonov parameter $\gamma=10^{-10}$. 

For the forward problem, we  {truncate}  the mesh by a Perfectly Matched Layer with constant absorption parameters.  We choose meshes where the triangle diameter is approximately $1/32$ of the wavelength
in the element. In each case we use $60$ incoming waves with directions $\bfd_{j}, j=1, \ldots, 60,$ uniformly on the unit circle and hence compute a $60 \times 60$ matrix $A$ with $A_{\ell, m} \approx u_{\infty}\left(\bfd_{\ell}, \bfd_{m}\right)$. Similarly we obtain a matrix $B$ approximating $h_{\infty}$. Setting the data vector $b_{z}$ with $\ell$ th entry {to be} $b_{z, \ell}= \phi_{\infty}\left(\bm d_{\ell}, \bfz\right), 1 \leq \ell \leq 60$ for some $\bfz \in B$. Finally we compute an approximation to the Herglotz kernel $g_z$ using Tikhonov regularization, setting $M= A - B$ and
$$
g_{z}=\left(\left(M\right)^{*} M+\gamma  I\right)^{-1}\left(M\right)^{*} b_{z},
$$
where the superscript $*$ denotes the conjugate transpose of the given matrix {and $\gamma$ is the Tikhonov regularization parameter}. This is repeated for $20$ randomly placed $\bfz \in B$ and the norm of $g_z$ is averaged.  This method follows the standard approach in, for example, \cite{CCMM2016}.

The choice of $60$ directions and $20$ points $\bfz \in B$ is essentially arbitrary, but we need sufficiently many incoming waves {to be able to approximate}  the far field operator. Our  results in \Cref{numer2} are for $\alpha=\beta=0$, peaks in the norm of $g_{z}$ correspond well to eigenvalues. Higher eigenvalues cannot be detected. 

\begin{figure}
	\begin{center}
		\begin{tabular}{cc}
			\resizebox{0.47\textwidth}{!}{\includegraphics{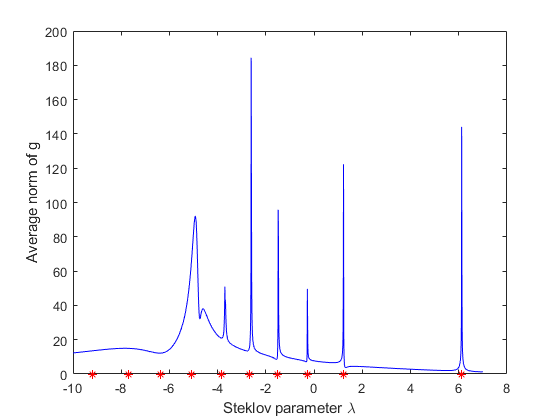}}&
			\resizebox{0.47\textwidth}{!}{\includegraphics{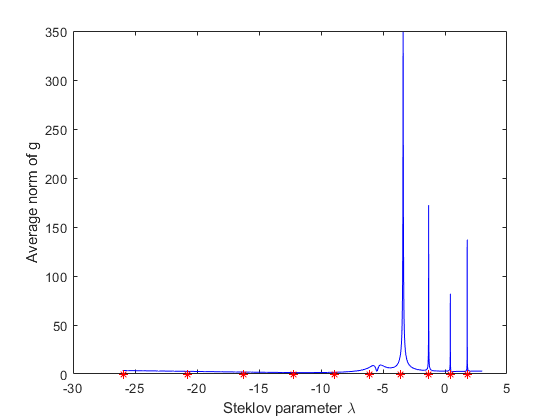}}
		\end{tabular}
	\end{center}
	\caption{The average of the $\ell^2$ norm of the discrete approximation $g_z$ against the Steklov parameter $\lambda$. The crosses in each figure show the position of Steklov eigenvalues computed by our finite element eigenvalue code. Left: semicircle,. Right: quadrant. Sharp peaks in the averaged norm of $g$ compute from far field data correspond to Steklov eigenvalues. 	}
	\label{numer2}	
\end{figure}

In  \Cref{numer3} we investigate detecting changes  by using $\alpha=0,0.5$ and $1$, and $\beta=0,\pm 0.2,\pm0.5$.  We can solve the far field equation as above, and determine shifts in Steklov eigenvalues from shifts in the peaks of the averaged norm of $g_{z}$. We have shown that Steklov eigenvalues can detect changes.

\begin{figure}
	\begin{center}
		\begin{tabular}{cc}
			\resizebox{0.47\textwidth}{!}{\includegraphics{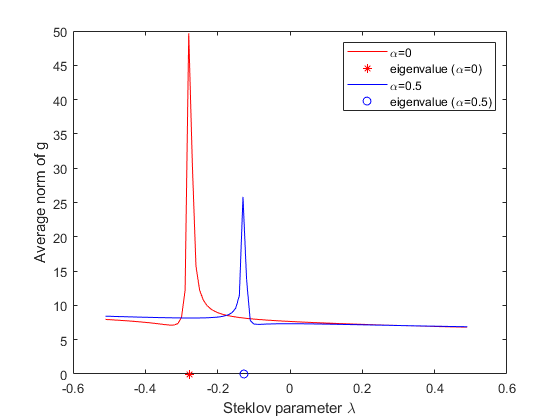}}&
			\resizebox{0.47\textwidth}{!}{\includegraphics{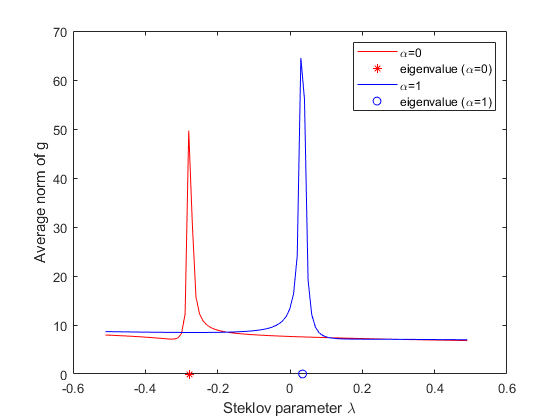}}\\
			\resizebox{0.47\textwidth}{!}{\includegraphics{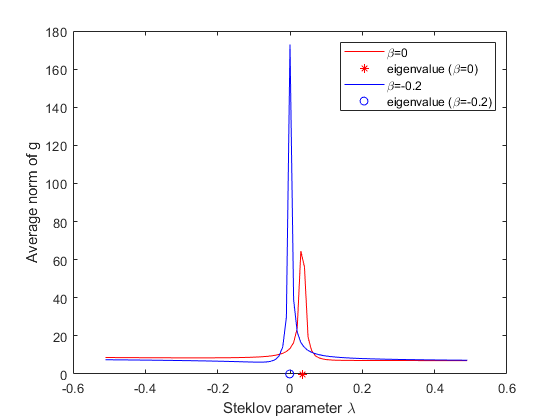}}&
			\resizebox{0.47\textwidth}{!}{\includegraphics{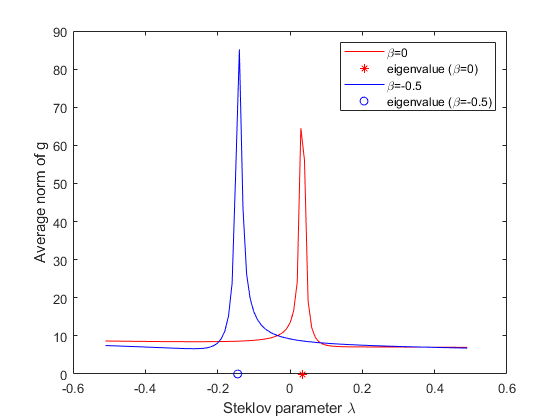}}\\
			\resizebox{0.47\textwidth}{!}{\includegraphics{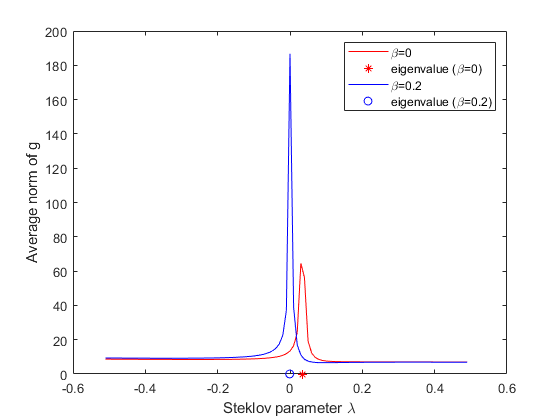}}&
			\resizebox{0.47\textwidth}{!}{\includegraphics{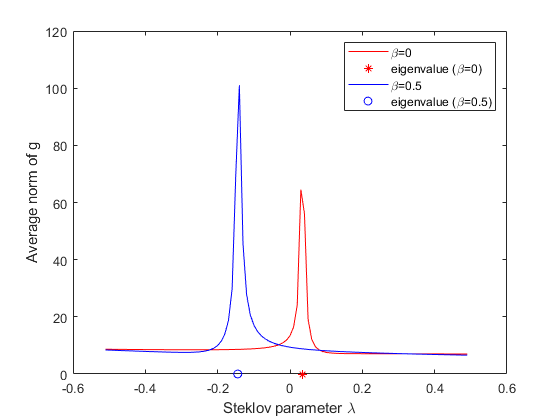}}
		\end{tabular}
	\end{center}
	\caption{Results for detecting changes in the eigenvalues due to changes in $\sigma$ (via changes in $\alpha$ and $\beta$). Top: Changing with respect to $\alpha$ by fixing $\beta=0$. Middle and bottom: Changing with respect to $\beta$ by fixing $\alpha=1$.}	
	\label{numer3}
	
\end{figure}

{For complex valued $\sigma$ the existence of eigenvalues has not been proved and we have not tested this case numerically.  Assuming such eigenvalues exist, we conjecture that by using a complex Steklov parameter $\lambda$ we could detect them by extending the search domain to include a region of the complex plane containing the eigenvalues as was done for modified Steklov eigenvalues in~\cite{CMMC}. }

\section{Conclusion}\label{sec:concl}
In this work, we studied the inverse scattering problem of obtaining target signatures for a screen.  The method is based on invoking an artificial domain such that the boundary of this domain contains  the screen.  Then we use the far field pattern of an auxiliary scattering problem having a mixed impedance and Neumann boundary condition to modify the far field operator for the screen. This results in target signatures that are the eigenvalues of a mixed Steklov eigenvalue problem. We analyze the problem when the function $\sigma$ describing the surface properties of the screen is real. 

It would be interesting to investigate this problem when $\sigma$ is complex, and analyze the case more elaborate models for the screen.  Another issue is the optimal choice of the artificial domain such that the mixed Steklov eigenvalues are sensitive to changes in $\sigma$, Finally, more complex models for the change in $\sigma$ need to be tested.

\section*{Acknowledgements}
P.\  Monk and Y.\ Zhang are partially supported by the US National Science Foundation (NSF) under grant DMS-1818867.   F.Cakoni and P. Monk are partially supported by AFOSR grant FA9550-20-1-0124. F. Cakoni is partially supported by US National Science Foundation (NSF) under grants DMS-1813492 and DMD- 2106255. F. Cakoni also acknowledges support from the NSF HDR TRIPODS award CCF-1934924.

\bibliographystyle{abbrv}
\bibliography{Refs}

\begin{thebibliography}{10}

\bibitem{crack3}
H.~Ammari, J.~Garnier, H.~Kang, W.-K. Park, and K.~S\o~lna.
\newblock Imaging schemes for perfectly conducting cracks.
\newblock {\em SIAM J. Appl. Math.}, 71(1):68--91, 2011.

\bibitem{ACH2017}
L.~Audibert, F.~Cakoni, and H.~Haddar.
\newblock New sets of eigenvalues in inverse scattering for inhomogeneous media
  and their determination from scattering data.
\newblock {\em Inverse Problems}, 33(12):125011, 28, 2017.

\bibitem{crackH}
L.~Audibert, L.~Chesnel, H.~Haddar, and K.~Napal.
\newblock Qualitative indicator functions for imaging crack networks using
  acoustic waves.
\newblock {\em SIAM J. Sci. Comput.}, 43(2):B271--B297, 2021.

\bibitem{AH2014}
L.~Audibert and H.~Haddar.
\newblock A generalized formulation of the linear sampling method with exact
  characterization of targets in terms of farfield measurements.
\newblock {\em Inverse Problems}, 30(3):035011, 20, 2014.

\bibitem{baum_91}
C.~E. {Baum}, E.~J. {Rothwell}, K.~. {Chen}, and D.~P. {Nyquist}.
\newblock The singularity expansion method and its application to target
  identification.
\newblock {\em Proceedings of the IEEE}, 79(10):1481--1492, 1991.

\bibitem{crack4}
M.~Bonnet.
\newblock Fast identification of cracks using higher-order topological
  sensitivity for 2-{D} potential problems.
\newblock {\em Eng. Anal. Bound. Elem.}, 35(2):223--235, 2011.

\bibitem{crack2}
Y.~Boukari and H.~Haddar.
\newblock The factorization method applied to cracks with impedance boundary
  conditions.
\newblock {\em Inverse Probl. Imaging}, 7(4):1123--1138, 2013.

\bibitem{CCH16}
F.~Cakoni, D.~Colton, and H.~Haddar.
\newblock {\em Inverse Scattering Theory and Transmission Eigenvalues}.
\newblock CBMS-NSF Regional Conference Series in Applied Mathematics. SIAM,,
  Philadelphia, 2016.

\bibitem{CCMM2016}
F.~Cakoni, D.~Colton, S.~Meng, and P.~Monk.
\newblock Stekloff eigenvalues in inverse scattering.
\newblock {\em SIAM J. Appl. Math.}, 76(4):1737--1763, 2016.

\bibitem{Irene_16}
F.~Cakoni, I.~{de Teresa}, H.~Haddar, and P.~Monk.
\newblock Nondestructive testing of the delaminated interface between two
  materials.
\newblock {\em SIAM J. Appl. Math.}, 76:2306--2332, 2016.

\bibitem{CLM2017}
J.~Cama\~{n}o, C.~Lackner, and P.~Monk.
\newblock Electromagnetic {S}tekloff eigenvalues in inverse scattering.
\newblock {\em SIAM J. Math. Anal.}, 49(6):4376--4401, 2017.

\bibitem{CCH2014}
M.~Chamaillard, N.~Chaulet, and H.~Haddar.
\newblock Analysis of the factorization method for a general class of boundary
  conditions.
\newblock {\em J. Inverse Ill-Posed Probl.}, 22(5):643--670, 2014.

\bibitem{C2018}
S.~Cogar.
\newblock A modified transmission eigenvalue problem for scattering by a
  partially coated crack.
\newblock {\em Inverse Problems}, 34(11):115003, 29, 2018.

\bibitem{C2020}
S.~Cogar.
\newblock Analysis of a trace class {S}tekloff eigenvalue problem arising in
  inverse scattering.
\newblock {\em SIAM J. Appl. Math.}, 80(2):881--905, 2020.

\bibitem{CMMC}
S.~Cogar, D.~Colton, S.~Meng, and P.~Monk.
\newblock Modified transmission eigenvalues in inverse scattering theory.
\newblock {\em Inverse Problems}, 33, 2017.
\newblock 125002.

\bibitem{CK2019}
D.~Colton and R.~Kress.
\newblock {\em Inverse acoustic and electromagnetic scattering theory},
  volume~93 of {\em Applied Mathematical Sciences}.
\newblock Springer, 2019.
\newblock Fourth edition.

\bibitem{DHJ_12}
B.~Delourme, H.~Haddar, and P.~Joly.
\newblock Approximate models for wave propagation across thin periodic
  interfaces.
\newblock {\em J. Math. Pures Appl.}, 98:28–71, 2012.

\bibitem{KG2008}
A.~Kirsch and N.~Grinberg.
\newblock {\em The factorization method for inverse problems}, volume~36 of
  {\em Oxford Lecture Series in Mathematics and its Applications}.
\newblock Oxford University Press, Oxford, 2008.

\bibitem{lax}
P.~D. Lax.
\newblock {\em Functional analysis}.
\newblock Pure and Applied Mathematics (New York). Wiley-Interscience [John
  Wiley \& Sons], New York, 2002.

\bibitem{McLean}
W.~McLean.
\newblock {\em Strongly elliptic systems and boundary integral equations}.
\newblock Cambridge University Press, Cambridge, 2000.

\bibitem{Melrose95}
R.~Melrose.
\newblock {\em Geometric Scattering Theory}.
\newblock Cambridge University Press, Cambridge, 1995.

\bibitem{osborn}
J.~E. Osborn.
\newblock Spectral approximation for compact operators.
\newblock {\em Math. Comput.}, 29:712--725, 1975.

\end{thebibliography}

\appendix
\section{Series solution of the mixed Steklov eigenvalue problem}\label{sec:exact}
We use the results in Section \ref{A1} below to test our Steklov eigenvalue solver.
\subsection{Sector of a circle} \label{A1} This example gives the exact mixed Steklov eigenvalues for the domain shown in the left panel of Fig.~\ref{Domains_Stoklev_eigenvalue_problem}.
We consider the sector of an circle given in polar coordinates by
\[
D_\alpha=\{(r,\theta)\;|\; 0<r<r_2,\; 0<\theta<\alpha\}		
\]
for some $2\pi>\alpha>0$.  {On} the segment of the boundary given by {$r=r_2$}, $0<\theta<\alpha$ we impose the Steklov boundary condition
\begin{equation}
	\partial_{\bfnu} u=\lambda^{(0)} u\label{S1a},
\end{equation}
and on the remaining boundaries we impose the homogeneous Neumann boundary condition. The superscript $(0)$ refers to the fact that we set $\sigma=0$ in (\ref{Steklov_eig}).

Separation of variables gives the following solution which satisfies the Neumann boundary condition for 
$\theta=0,\alpha$
\[
u(r,\theta)=\cos(n\pi{\theta}/\alpha)J_{n\pi/\alpha}(kr).	
\]
Satisfaction of the Steklov boundary condition gives the following equation for the Steklov eigenvalues
\[
\lambda^{(0)}_n=k\frac{(J_{n\pi/\alpha})'(kr_2)}
{J_{n\pi/\alpha}(kr_2)}.		
\]
For a given constant $\sigma$ the observed eigenvalues are
\begin{equation}
	\lambda_n=\sigma+\lambda^{(0)}_n.\label{trans}
\end{equation}
In Table \ref{tab0} we show the first nine eigenvalues $\lambda^{(0)}_n$ for the half-circle considered in \Cref{sec:num} where $r_1=1$, $\alpha=\pi$ and $k=2$.  Corresponding plots of the first six eigenfunctions are shown in Fig.~\ref{Fig0}. 
\begin{table}
	\begin{center}
		\begin{tabular}{cccccccccc}
			$\lambda^{(0)}_0$&$\lambda^{(0)}_1$&$\lambda^{(0)}_2$ &$\lambda^{(0)}_3$&$\lambda^{(0)}_4$&$\lambda^{(0)}_5$  &$\lambda^{(0)}_6$ &$\lambda^{(0)}_7$&$\lambda^{(0)}_8$\\\hline
			-5.1518  & -0.2236 &   1.2691 &   2.4727 &   3.5859  &  4.6584  &  5.7090   & 6.7464 &   7.7753\end{tabular}
		\caption{First 9 eigenvalues for a sector with $k=2$, $\alpha=\pi$ and $r_2=1$. }
	\end{center}
	\label{tab0}
\end{table}

\begin{figure}
	\begin{center}
		\begin{tabular}{c|c}
			\resizebox{0.4\textwidth}{!}{\includegraphics{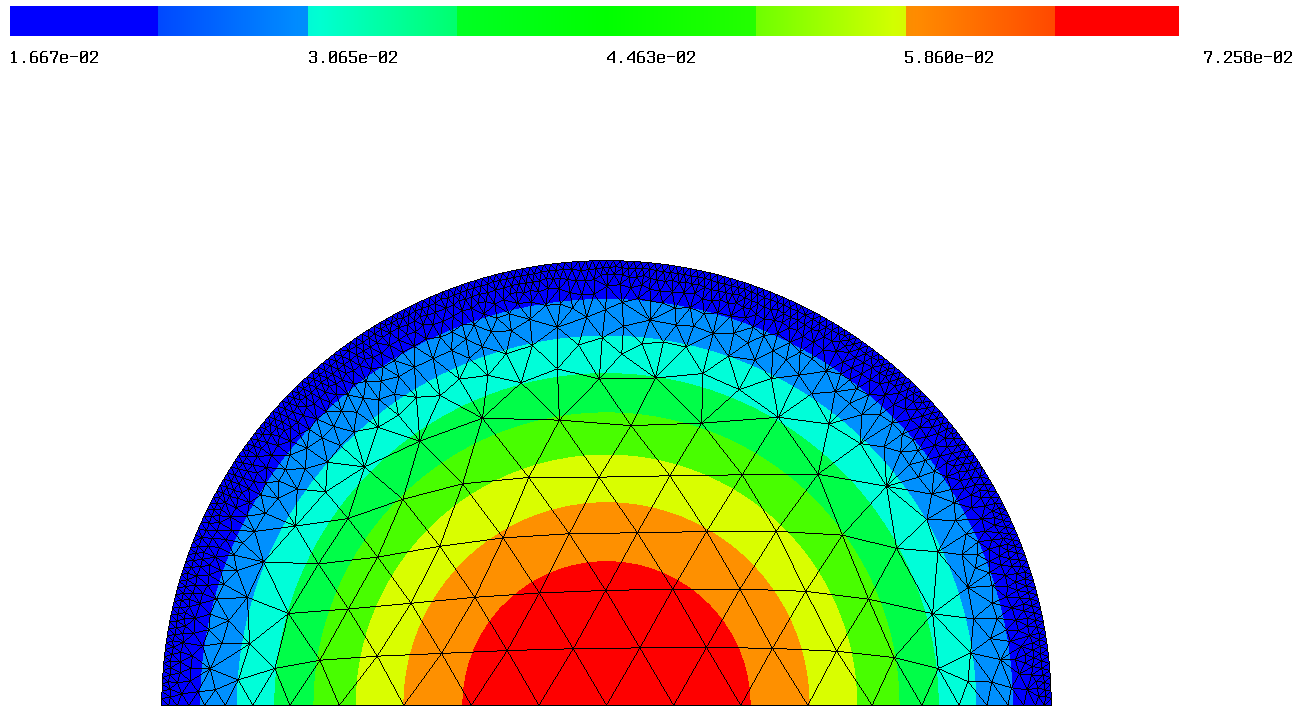}}&
			\resizebox{0.4\textwidth}{!}{\includegraphics{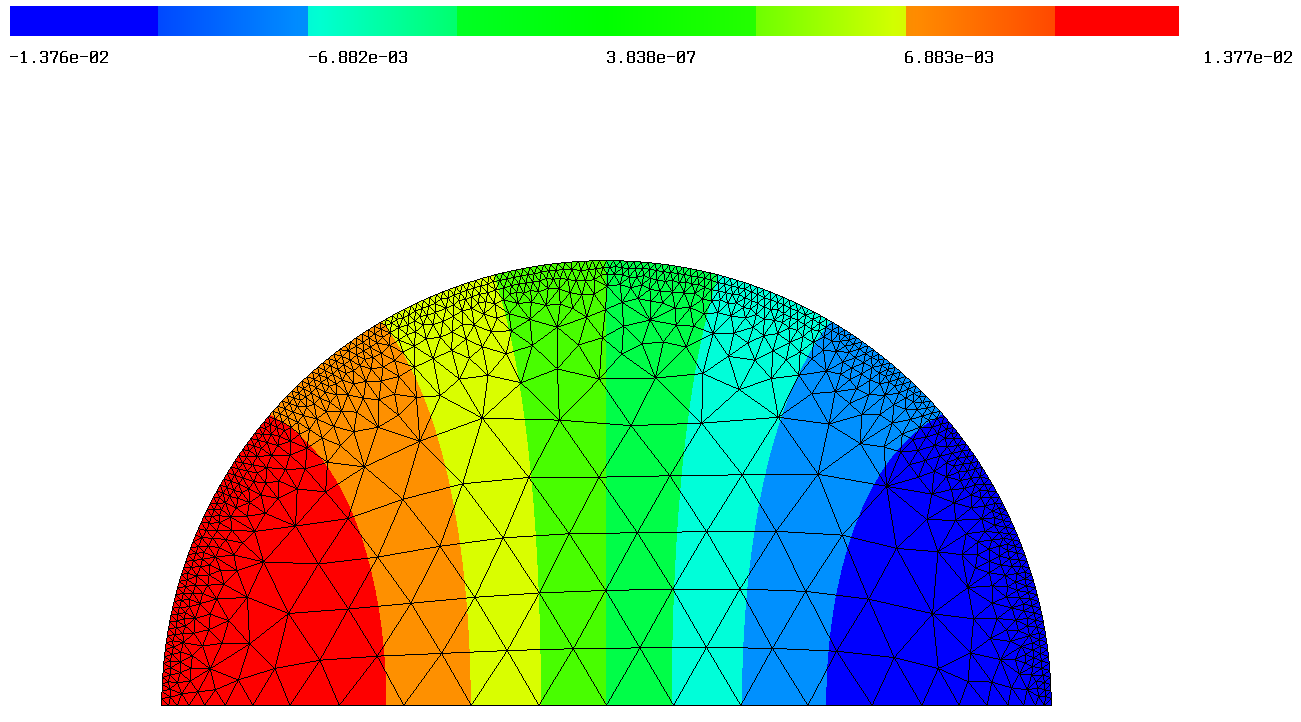}}\\\hline
			\resizebox{0.4\textwidth}{!}{\includegraphics{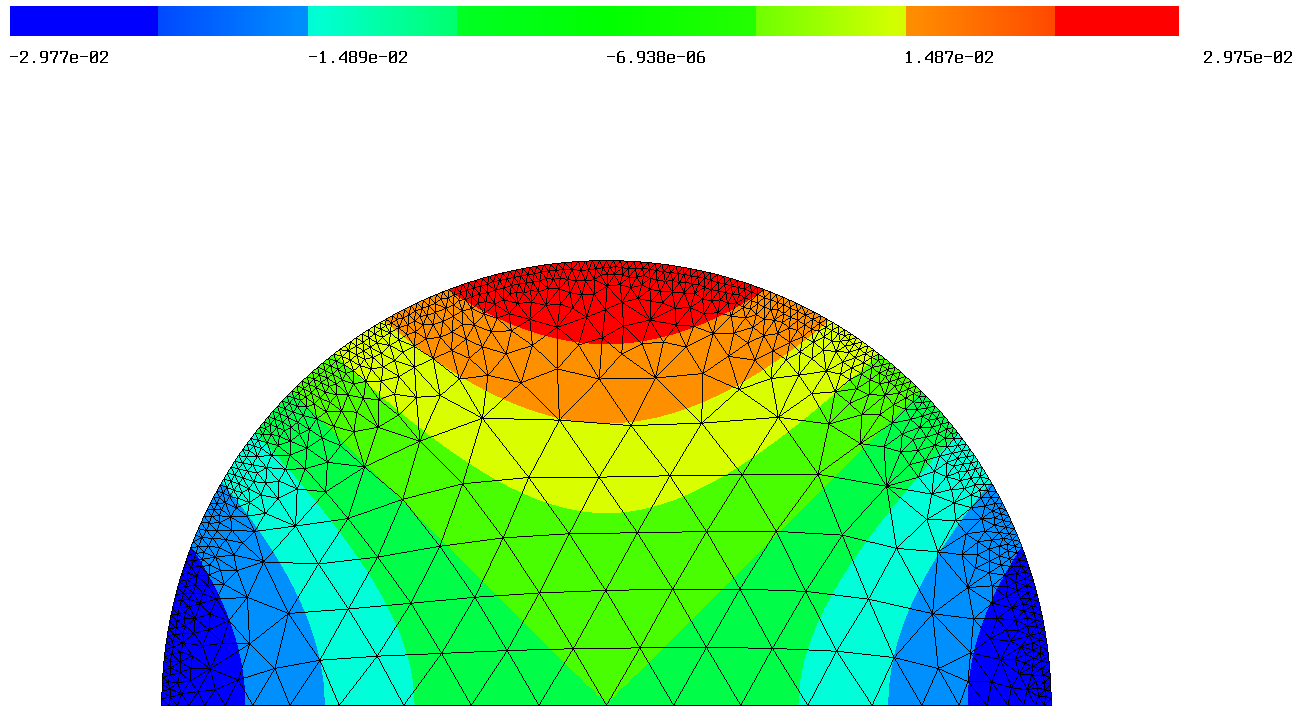}}&
			\resizebox{0.4\textwidth}{!}{\includegraphics{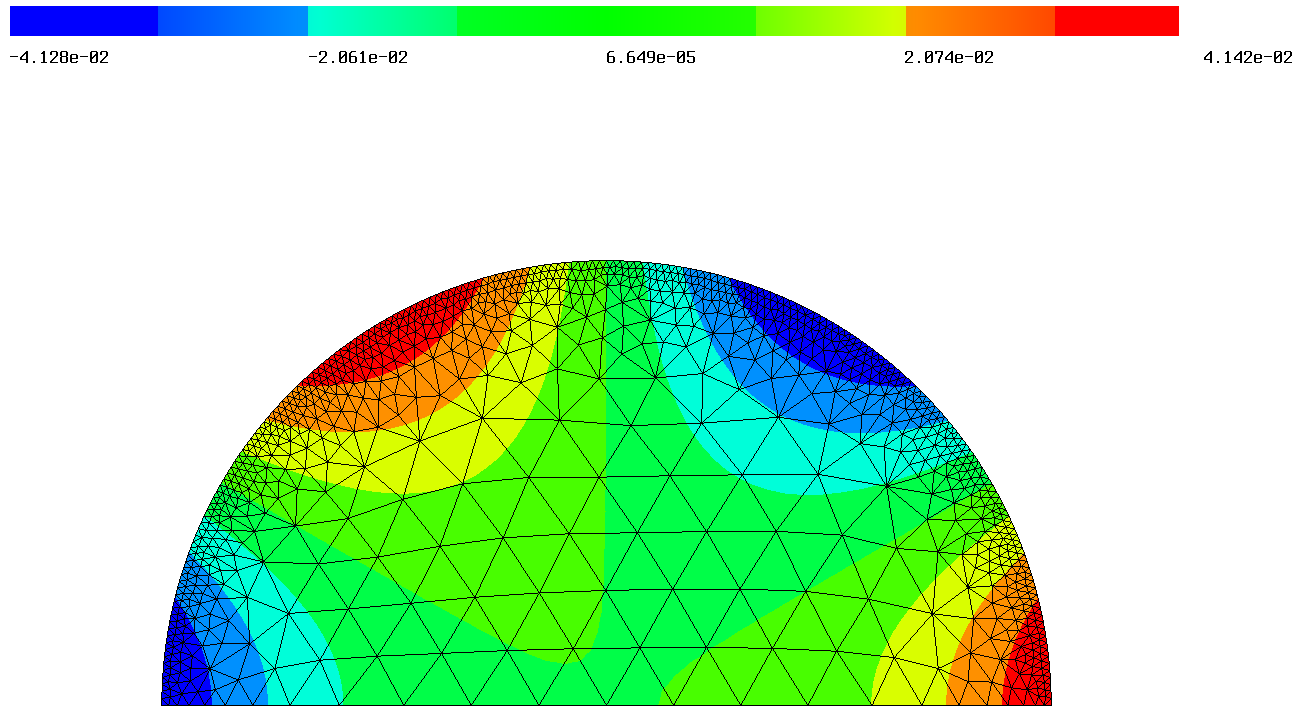}}\\\hline
			\resizebox{0.4\textwidth}{!}{\includegraphics{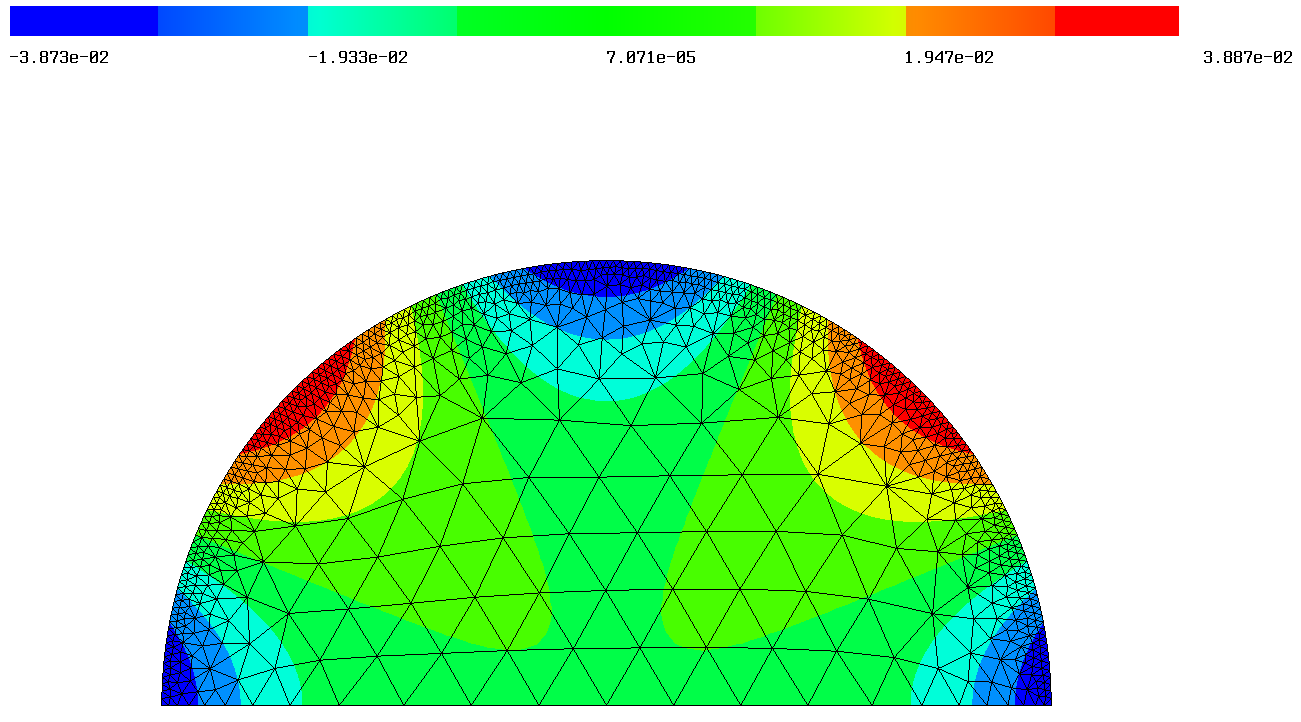}}&
			\resizebox{0.4\textwidth}{!}{\includegraphics{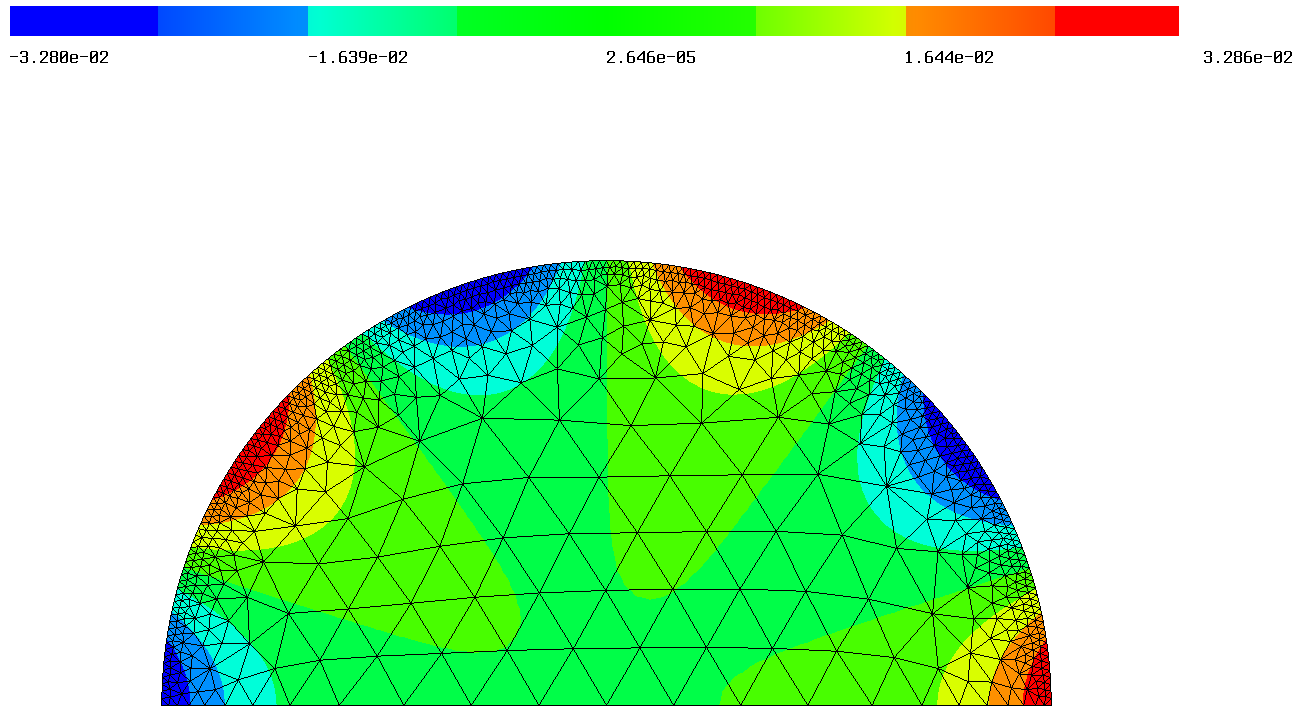}}\\
		\end{tabular}
	\end{center}
	\caption{First six eigenfunctions corresponding to the eigenvalues in Table~\ref{tab0}.}\label{Fig0}
\end{figure}

\subsection{{Angular} sector}
The domain considered here is not used in our study, but shows that a thin domain $D$ can support eigenfunctions along the screen.
We consider the sector of an annulus given in polar coordinates by
\[
D_\alpha=\{(r,\theta)\;|\; r_1<r<r_2,\; 0<\theta<\alpha\},
\]
for some $2\pi>\alpha>0$.  {On} the segment of the boundary given by $r=r_1$, $0<\theta<\alpha$ we impose the Steklov boundary condition (\ref{S1a})
and on the remaining boundaries we impose the Neumann boundary condition.  
Separation of variables gives the following solution which satisfies the Neumann boundary condition for 
$\theta=0,\alpha$
\[
u(r,\theta)=\cos(n\pi\theta/\alpha)(C_1H_{n\pi/\alpha}^{(1)}(kr)+ C_1H_{n\pi/\alpha}^{(2)}(kr)),
\]
Satisfaction of the Neumann boundary condition on $r=r_1$ gives
\[
k(C_1 (H_{n\pi/\alpha}^{(1)})'(kr_1)+ C_2(H_{n\pi/\alpha}^{(2)})'(kr_1))=0.
\]
So with this constraint
\[
u(r,\theta)=C_3\cos(n\pi/\alpha)((H_{n\pi/\alpha}^{(2)})'(kr_1)) H_{n\pi/\alpha}^{(1)}(kr)- (H_{n\pi/\alpha}^{(1)})'(kr_1)H_{n\pi/\alpha}^{(2)}(kr),
\]
Satisfaction of the Steklov boundary condition gives the following equation for the Steklov eigenvalues
\[
\lambda^{(0)}_n=k\frac{(H_{n\pi/\alpha}^{(2)})'(kr_1)) (H_{n\pi/\alpha}^{(1)})'(kr_2)- (H_{n\pi/\alpha}^{(1)})'(kr_1)(H_{n\pi/\alpha}^{(2)})'(kr_2)}
{(H_{n\pi/\alpha}^{(2)})'(kr_1)) H_{n\pi/\alpha}^{(1)}(kr_2)- (H_{n\pi/\alpha}^{(1)})'(kr_1)H_{n\pi/\alpha}^{(2)}(kr_2)}.
\]
For a given constant $\sigma$ the observed eigenvalues are then given by (\ref{trans}).

In Table~\ref{tab1} we show the first nine eigenvalues $\lambda^{(0)}_n$ for the sector where $r_2=1$, $\alpha=\pi/2$ and $k=2$ for two different values of $r_1$.  Corresponding plots of the first six eigenfunctions are shown in Fig.~\ref{Fig1}. 

\begin{table}
	\begin{tabular}{l|cccccccccc}
		$r_1$&$\lambda^{(0)}_0$&$\lambda^{(0)}_1$&$\lambda^{(0)}_2$ &$\lambda^{(0)}_3$&$\lambda^{(0)}_4$&$\lambda^{(0)}_5$  &$\lambda^{(0)}_6$ &$\lambda^{(0)}_7$&$\lambda^{(0)}_8$\\\hline
		0.8& -0.7565 &  0.1692 &  2.3567&   4.8812&   7.3089  & 9.5760 & 11.7270&  13.8097 & 15.8557 \\
		0.9 &- 0.3849  & 0.0413  & 1.2482  & 3.0534 &  5.2381  & 7.6118  &10.0414  &12.4500&  14.8025
	\end{tabular}
	\caption{First 10 eigenvalues for an annular sector with $k=2$, $\alpha=\pi/2$, and $r_2=1$. }
	\label{tab1}
\end{table}

\begin{figure}
	\begin{center}
		\begin{tabular}{cc}
			\resizebox{0.4\textwidth}{!}{\includegraphics{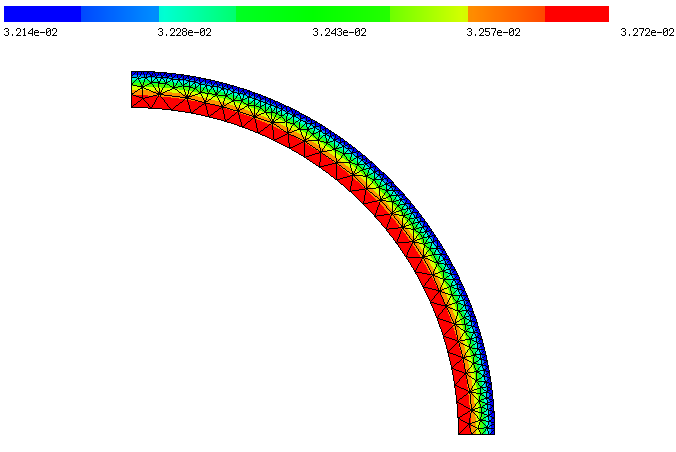}}&
			\resizebox{0.4\textwidth}{!}{\includegraphics{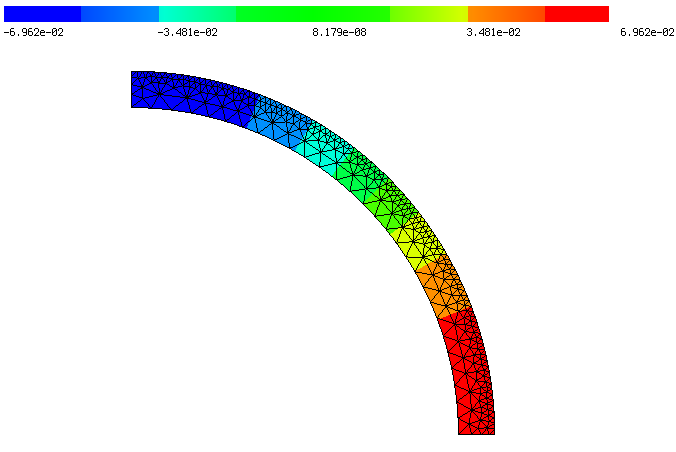}}\\
			\resizebox{0.4\textwidth}{!}{\includegraphics{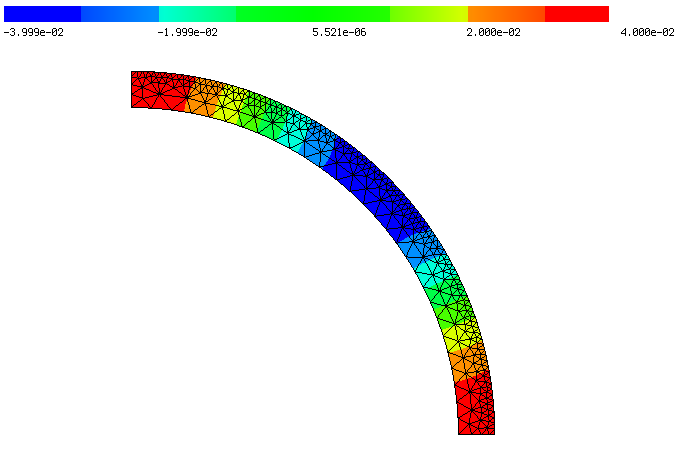}}&
			\resizebox{0.4\textwidth}{!}{\includegraphics{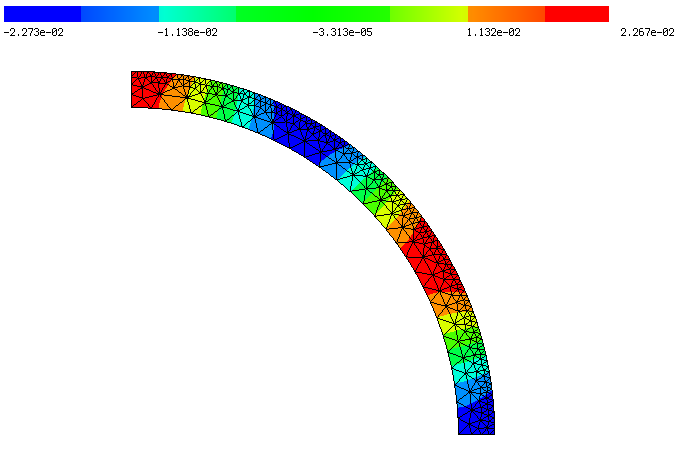}}\\
			\resizebox{0.4\textwidth}{!}{\includegraphics{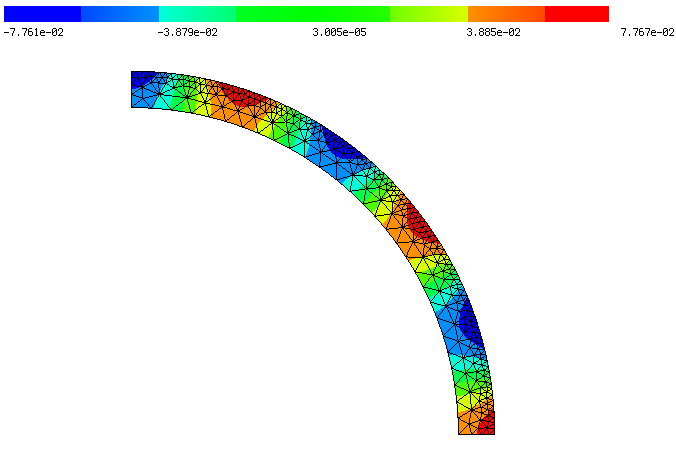}}&
			\resizebox{0.4\textwidth}{!}{\includegraphics{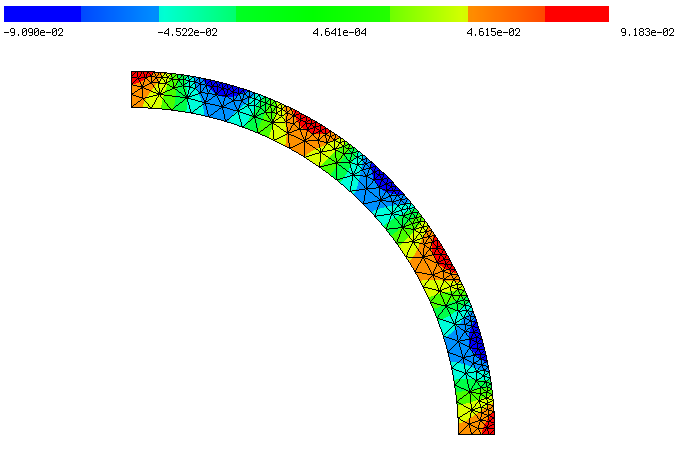}}\\
		\end{tabular}
	\end{center}
	\caption{Density plots of the first six eigenfunctions corresponding to the eigenvalues in Table~\ref{tab1} for $r_1=0.9$.}\label{Fig1}
\end{figure}

\end{document}